\title{A family of three-dimensional Virtual Elements for Hellinger-Reissner elasticity
	problems}
\author{Michele Visinoni\thanks{Department of Mathematics, University of Milan,
	Via Saldini 50, 20133 Milano, Italy
	(michele.visinoni@unimi.it)}}
\date{}
\begin{document}

%	\address{Dipartimento di Matematica, Universit\`a di Milano,\\
%		Via Saldini 50, I-20133 Milano, Italy\\
%		michele.visinoni@unimi.it}

%	
%	\keywords{Virtual element methods; 3D elasticity problems; Hellinger-Reissner variational formulation.}
%	
%	\ccode{AMS Subject Classification: 65N30, 65N12}
	\maketitle	
	\begin{abstract}
		We present a family of Virtual Element Methods for three-dimensional linear elasticity problems based on the Hellinger-Reissner variational principle. A convergence and stability analysis is developed. Moreover, using the hybridization technique and exploiting the information derived from this procedure, we show how to compute a better approximation for the displacement field. The numerical experiments confirm the theoretical predictions.\\
		\\
		{\textbf{AMS subject classification:}}  65N30, 65N12.
		\\
		\\
		{\textbf{Keyword:}} {Virtual element methods; 3D elasticity problems; Hellinger-Reissner variational formulation.}
	\end{abstract}

	%%%%%%%%%%%%%%%%%%%%%%%%%%%%%%%%%%%%%%%%%%%%%%%%%%%%%
	%% Introduction
	%%
	%%%%%%%%%%%%%%%%%%%%%%%%%%
	\section{Introduction}
% ok
The Virtual Element Method (VEM), introduced in~\cite{volley}, is a technology for the approximation of Partial Differential Equations (PDEs) on polytopal meshes, which shares the same variational background of the Finite Element Method (FEM). The core idea behind VEM consists of using local approximation spaces, whose functions are solutions to suitable differential problems. This definition of the discrete spaces provides great flexibility, despite the loss of the explicit knowledge of the discrete functions. Nevertheless, with the information (degrees of freedom) that one has at disposal on the element boundary and interior, it is always possible to compute the discrete bilinear forms and solve the linear system of the problem. 
The design of the Virtual Element Method leads to certain benefits.
The first advantage is related to the opportunity to preserve, at the discrete level, some important features of the continuous problems. For instance, in the elasticity problems~\cite{ARTIOLI2017155,ARTIOLI2018978,DLV}, it is possible to consider a-priori stress tensors without losing the regularity of the solution, in Cahn-Hilliard equation~\cite{ABSVCahnHilliard,ASVVCahnHilliard}, we can take discrete solutions with high-regularity and in Stokes problem, it is possible to preserve the divergence-free property for velocity~\cite{BLV}.
The second advantage regards its robustness in treating general polygonal/polyhedral meshes, including hanging nodes, small edges/faces, and distorted or non-convex elements, allowing us to easily handle, for instance, fractures and contact problems~\cite{BENEDETTO2014135,BENEDETTO201755,CIHAN2022115385,wriggers2017}, but also issues related to adaptivity~\cite{BCNVV_adaptiveVEM}.
%->space

%
In these years, VEM has aroused considerable interest in both the mathematical and the engineering community. Here, we only mention, as a representative non-exhaustive sample a brief list of paper~\cite{ABLS_part_I,ABLS_part_II,ARTIOLI2020112667, ARTIOLI_RICOVERYVEM,MORA2020112687,DALTRI2021113663,DASSI20221,Hudobivnik2019,Lamperti2022}.
%->space

%
In the present work, we extend the study presented in~\cite{ARTIOLI2018978} to a three-dimensional case. More precisely, we design, analyze and implement conforming VE schemes of general order $k$ for linear elasticity problems. We consider the mixed variational formulation based on the Hellinger-Reissner principle.
%where the displacements and the stresses are the variables of our problem.
As it is well known, imposing both the symmetry of the stresses and the continuity of the tractions at the inter-elements is typically a great source of trouble in the framework of the classical Finite Element Method. 
For these reasons, one usually prefers to relax the continuity condition, considering non-conforming schemes as in~\cite{ArnoldWintherNonconforming} or relax the symmetry of the stress tensor, as in~\cite{Arnold2007mixed}, or change the approach as in~\cite{Schoeberl1,Schoeberl2}.
Here, we want to use the flexibility of VEM to avoid these drawbacks and to design and implement valid alternative schemes, which provide symmetric stresses, continuous tractions and are reasonably cheap concerning the delivered accuracy. 
Furthermore, as in the lowest case~\cite{DLV2021}, the proposed schemes do not have nodal stress degrees of freedom and so we can apply the hybridization strategy to solve the resulting linear system in an efficient way, with also the possibility to construct a post-processed displacement approximation of higher accuracy.
The paper is organized as follows. In Section~\ref{section:modelProblem}, we present the mixed Hellinger-reissner formulation of the 3D linear elasticity problem. Section~\ref{section:VEMHRFAMILY} describes the discrete schemes, while Section~\ref{section:StabilityConvergenceAnalysis} shows the convergence and stability analysis. In Section~\ref{section:hybridization_postprocessing} we briefly present the idea of the hybridization procedure, showing how we construct a better discrete solution for the displacement field through a post-processing procedure. The numerical experiments, that assess the theoretical predictions, are detailed in Section~\ref{section:numerical_results}.
\begin{paragraph}{Notation}
In this paper we will use standard notations for Sobolev spaces, norm and seminorms~\cite{Lions-Magenes}. Give two positive quantities $a$ and $b$, we write $a\lesssim b$ is there exists a positive constants $C$, independent of $a$, $b$ and the mesh size, such that $a\leq Cb$. Moreover, given any set $D\subseteq \R^d$, $d=1,2,3$, and an integer $k\geq 0$, we denote by $\Poly[k]{D}$ the polynomial space up degree $k$, defined on $D$. We denote by $\pi_{k,D}$ the dimension of $\Poly[k]{D}$.
\end{paragraph}
	%% model problem HR_3D
% indichiamo con section -> le section
\section{The linear elasticity problem}~\label{section:modelProblem}
We introduce the elasticity problem whose variational formulation is based on the Hellinger-Reissner principle, see~\cite{BoffiBrezziFortin,Braess:book} for more details.
Let $\O\subseteq\R^3$ be a polyhedral domain and we define by $\partial \Omega$ its boundary. To impose suitable boundary conditions, we divide $\partial \Omega$ into two regular disjoint parts $\partial \Omega_D$ and $\partial \Omega_N$, where in the first one we set the natural boundary conditions, while in the second one, the essential conditions. For simplicity, namely for the solvability issues, we suppose that $\partial \Omega_D\neq \emptyset$. Then, the linear elasticity problem reads
\begin{equation}\label{problem:cont-strong}
\left\lbrace{
	\begin{aligned}
		&\mbox{find } (\bfsigma,\bbu)~\mbox{such that}\\
	&-\bdiv \bfsigma= \bbf \quad&\mbox{in $\Omega$}, \\
	& \bfsigma = \C \teps(\bbu)\quad&\mbox{in $\Omega$},
	\end{aligned}	
} \right.
\end{equation}
supplied with the following boundary conditions:
\begin{equation}\label{problem:boundaryCondition}
\left\lbrace{
	\begin{aligned}
	&\bbu=\bbg\ &\mbox{in $\partial\Omega_D$},\\
	&\bfsigma \bbn =\bfpsi\ &\mbox{in $\partial\Omega_N$}.
	\end{aligned}	
} \right.
\end{equation}
Here above, $\bfsigma$ and $\bbu$ represent the stress and the displacement field, respectively. Moreover, $\bbf\in \LTwoTriple{E} $ represents the loading term, $\C$ is the elasticity tensor and $\teps(\cdot)$ is the symmetric gradient operator. 
%Indeed, the general case, with non-homogeneous boundary conditions can be exactly treated in a very straightforward way by using the same techniques for the Galerkin methods.
%
%
Before introducing the weak formulation of Problem~\eqref{problem:cont-strong}, we fix the following notations:
\begin{equation}\label{eq:globalspaces}
U:=\left[L^2(\Omega)\right]^3, \quad
    \Sigma:=\left\{ \bftau\in H(\bdiv;\Omega)\ :\ \bftau \mbox{ is symmetric,}\ \  \bftau\bbn_{|\partial \Omega_N}\right\}
\end{equation}
equipped with standard norms
\begin{equation*}
    \norm[U]{\bbu}^2:=\int_{\O}|\bbu|^2~\dO,\quad \quad \norm[\Sigma]{\bfsigma}^2:=\int_{\O}|\bfsigma|^2~\dO+\int_{\O}|\bdiv \bfsigma|^2~\dO.
\end{equation*}
From now on, for sake of simplicity, we only consider the case of homogeneous boundary conditions, aware that the general case can be treated exactly in the same way of the classical Galerkin methods. Hence we take $\bbg = \bfpsi=\bfzero$.
%In the following, for the sake of simplicity, we assume only the homogeneous Dirichlet boundary conditions, hence we take $\partial \Omega=\partial \Omega_D$, $\partial \Omega_N= \emptyset$ and  $\bbg =\bfzero$. 
We define the bilinear forms $a(\cdot,\cdot): \Sigma \times \Sigma \rightarrow \R$ and $b(\cdot,\cdot): \Sigma \times U \rightarrow \R$ as follows
\begin{equation}
    \begin{aligned}
        & a(\bfsigma,\bftau):=\int_{\O}\D\, \bfsigma:\bftau~\dO,\\ %=\sum_{E\in\Th}a_E(\bfsigma,\bftau)=\sum_{E\in\Th}\int_E \D\,\bfsigma:\bftau~\dEl, \\
        & b(\bfsigma,\bbu):=\int_{\O}\bdiv\bfsigma\cdot \bbu~\dO,
    \end{aligned}
\end{equation}
where the tensor $\D=\C^{-1}$ is assumed to be uniformly bounded, positive definite and sufficiently regular. Then, the corresponding weak formulation reads
\begin{equation}\label{problem:cont-weak}
\left\lbrace{
	\begin{aligned}
	&\mbox{find } (\bfsigma,\bbu)\in \Sigma\times U~\mbox{such that}\\
	&a(\bfsigma,\bftau) + b(\bftau, \bbu)=\bfzero  &\forall \bftau\in \Sigma,\\
	& b(\bfsigma, \bbv) = -(\bbf,\bbv) &\forall \bbv\in U,
	\end{aligned}
} \right.
\end{equation}
where $(\cdot,\cdot)$ is the inner product in  $\left[L^2(\O)\right]^3$. It is well known that Problem~\eqref{problem:cont-weak} is well-posed, see for instance~\cite{BoffiBrezziFortin} and it holds
$$
\norm[\Sigma]{\bfsigma}+\norm[U]{\bbu}\lesssim \norm[0]{\bbf}, %\lesssim ||\bbf||_0,
$$
where the hidden constant depends on the domain $\O$ and on the material tensor $\D$, which does not degenerate in the incompressible limit. % ($\lambda\rightarrow + \infty$).
	%virtual_element_method
% ----------------------------------------------------
\section{The Virtual Element Method}\label{section:VEMHRFAMILY}
% ----------------------------------------------------
%%
%%	HYPOTHESIS ON THE MESH
%%
In this section, we define our virtual element discretization of Problem~\eqref{problem:cont-weak}.
Let $\{\mathcal{T}_h\}_h$ be a sequence of decompositions of $\Omega$ into general polyhedral elements $E$ with
\[
%h_E := {\rm diameter}(E) , \quad
%
h := \max_{E \in \mathcal{T}_h} h_E.
\]
We suppose that for all $h$, each element $E$ in $\mathcal{T}_h$ is a contractible polyhedron that fulfils the following assumptions, see~\cite{projectors}:
\begin{enumerate}[label=\textbf{A.\arabic*}]
	\item\label{meshA_1} $E$ is star-shaped with respect to a ball $B_E$ having radius $\geq \gamma \, h_E$;
	\item\label{meshA_2} every face $f$ of $E$ is star-shaped with respect to a disk $B_f$ having radius $\geq \gamma\, h_f$;
	\item\label{meshA_3} every edge $e$ of $E$ satisfies $h_e \geq \gamma \, h_f \geq \gamma^2 \, h_E$,
\end{enumerate}
where $\gamma$ is a suitable positive constant. We remark that the above hypotheses, although not too restrictive in many practical cases, can be further relaxed, as investigated in~\cite{BLRXX, BERTOLUZZA2021,BrennerSungSmallEdges}. 
%----------------------------------------------------------
% LOCAL SPACES
%----------------------------------------------------------
\subsection{The local spaces}\label{subsection:localSpaces}
%----------------------------------------------------------
We fix an integer $k\geq 1$. Given a polyhedron $E\in\Th$, with $n^E_f$ faces, we firstly introduce these two elementary spaces: $\RM(E)$ and $\RM_k^{\perp}(E)$.
\paragraph{Space $\RM(E)$} It is the space of local infinitesimal rigid body motions:
\begin{equation}\label{space:localRM}
	\RM(E):=
	\left\{
	\bbr(\bbx) = \bfalpha + \bfomega \wedge \big(\bbx -\bbx_E\big)\ \ \text{ s.t. }\ \bfalpha,\,\bfomega \in\R^3 
	\right\},
\end{equation}
whose dimension is equal to 6.
\paragraph{Space $\RM_k^{\perp}(E)$} This space represents the orthogonal space of $\RM(E)$ respect to $\PolyTriple{k}{E}$ and it is defined as follows:
\begin{equation}\label{space:localRM_perp}
\RM_k^{\perp}(E):=
\left\{
\bbp_k \in\PolyTriple{k}{E} \ : \ \int_{E} \bbp_k\cdot \bbr~\dEl =0, \quad \forall \bbr \in \RM(E)
\right\}.
\end{equation}
Hence, the following $L^2$-orthogonal decomposition holds:
\begin{equation}\label{space:decomposition_orth_poly}
\PolyTriple{k}{E}  := \RM(E) \oplus \RM_k^{\perp}(E).
\end{equation}
The dimension of the space $\RM^{\perp}_k(E)$ is 
\begin{equation*}
\pi^{\perp}_{k,E}:= \dim\left(\PolyTriple{k}{E} \right) - \dim\left(\RM(E)\right) = 3\pi_{k,E} - 6= %\frac{(k+3)(k+2)(k+1)}{2}-6.
\frac{k^3+6k^2+11k-6}{2}.
\end{equation*}
A possible construction of a basis $\left\{\bs\varphi_i \right\}_{i=1, \dots,\pi_{k,E}^{\perp}}$ for $\RM_k^{\perp}(E)$ can be obtained as follows. Let $\left\{\hat{\bs\varphi}_i \right\}_{i=1,\dots,3\pi_{k,E}}$ be a set of linear independent basis function for $\PolyTriple{k}{E}$ such that the first six functions are rigid body motions. Then, starting from this set, we perform an $L^2$-orthogonalization procedure. In particular, the approach that we have used in Section~\ref{section:numerical_results} is based on the application of the modified Gram-Schmidt (MGS) orthogonalization algorithm~\cite{BassiBottiColomboDiPietroTesini, GiraudLangouRozioznik}. The MSG algorithm with re-orthogonalization is set up in Algorithm~\ref{alg:1}.
\begin{algorithm}[tbh]
\caption{MGS algorithm}\label{alg:1}
\begin{algorithmic}[1]
\For{$i=1:3\pi_{k,E}$}
	\For{$k=1:2$} \hskip3em\Comment{re-orthoganlization procedure}	
	\State $\hat{\bs\varphi}_i^{(k)} = \hat{\bs\varphi}_i^{(k-1)}$
	\For {$j=1:i-1$} \Comment{GS orthogonalization}
	\State$\hat{\bs\varphi}_i^{(k)} = \hat{\bs\varphi}_i^{(k)}-(\hat{\bs\varphi}_i^{(k)},\bs\varphi_j)\,\bs\varphi_j$
	\EndFor%
\EndFor
	\State		$\bs\varphi_i=\hat{\bs\varphi}_i^{(k)}/||\hat{\bs\varphi}_i^{(k)}||_{0,E}$
	\hskip2em\Comment{Normalization}
	\EndFor
\end{algorithmic}
\end{algorithm}
%% ->space
As we can see, the term ``re-orthogonalization'' is related to the fact that the orthogonalization procedure is applied more than once, in order to obtain a more stable algorithm. In particular, for our code, we applied twice, which is enough to achieve the goal.
%
%Our local approximation space for the stress field is then defined by
\paragraph{Stress space} 
Now, we are ready to introduce our local approximation space for the stress field.
\begin{equation}\label{space:localStress}
\begin{aligned}
\Sigma_h(E):=
\left\{\bftau_h\in  H(\bdiv;E)\ :\  \right.&\exists\, \bbw^\ast\in \left[H^1(E)\right]^3 \mbox{ s.t. } \bftau_h=\C\teps(\bbw^\ast);\\
& \left. (\bftau_h\bbn)_{|f}\in \PolyTriple{k}{f} \quad \forall f\in \partial E; \right.\\
& \left.\bdiv\bftau_h\in\PolyTriple{k}{E}
\right\}.
\end{aligned}
\end{equation}
%% descrizione a parole dello spazio considerato.
We notice that the stress approximation space consists of regular symmetric tensors that they are unknown (virtual) inside the element, while their tractions on each faces and the divergence are vector polynomial functions. Moreover, due to decomposition~\eqref{space:decomposition_orth_poly}, for each $\bftau_h\in\Sigma_h(E)$ we may write is divergence as follows
%
%\begin{remark}
%	Alternatively, the space~\eqref{space:localStress} can be defined as follows 
%	\begin{equation}
%	\begin{aligned}
%	\Sigma_h(E):=
%	\Big\{ 
%	\bftau_h\in & H(\bdiv;E)\ :\ \bftau_h =\bftau_h^T,\quad \curl\bcurl(\D\bftau_h)=0  \\ 
%	&(\bftau_h\,\bbn)_{|f}\in \left[\P_k(f)\right]^3 \quad \forall f\in \partial E;\quad \bdiv\bftau_h\in \left[\P_k(E) \right]^3
%	\Big\}.
%	\end{aligned}		
%	\end{equation}
%	Here above the equation $\curl\bcurl(\D \bftau_h)$ is to be intended in sense of distribution.
%\end{remark}
\begin{equation}
\bdiv \bftau_h := \bbd_{RM} + \bbd_k^{\perp}
\end{equation}
for a unique couple $(\bbd_{RM},\bbd_k^{\perp})\in\RM(E)\times\RM_k^{\perp}(E)$. 
We observe that $\bbd_{RM}$, the $\RM(E)$-component of $\bdiv \bftau_h$, is completely determine by the boundary information $(\bftau_h\bbn)_{|f}\in\PolyTriple{k}{f}$ (cf.~\eqref{space:localStress}). Indeed, using the integration by parts and the orthogonal decomposition~\eqref{space:decomposition_orth_poly}, we have:
%follows the similar steps to Proposition 3.1 in~\cite{DLV}, we have
\begin{equation}~\label{eq:divergenceRM}
	\int_E \bbd_{RM}\cdot \bbr~\dEl = \int_E \bdiv\bftau_h\cdot \bbr~\dEl= \sum_{f\in \partial E} \int_{f}\left(\bftau_h\bbn\right)_{f}\cdot \bbr~\df \qquad \forall\bbr\in\RM(E).
\end{equation} 
More precisely, setting
\begin{equation}~\label{eq:div0}
	\bbd_{RM} = \bfalpha_E + \bfomega_E \wedge \big(\bbx -\bbx_E\big),
\end{equation}
from~\eqref{eq:divergenceRM} and Proposition~3.1 in~\cite{DLV}, we infer
\begin{equation}~\label{eq:div1}
	\bfalpha_E = \frac{1}{|E|}\sum_{f\in \partial E} \int_f \left(\bftau_h\bbn\right)_{f}~\df,
\end{equation}
and $\bfomega_E$ is the unique solution of the following linear system
\begin{equation}~\label{eq:div2}
	\int_E (\bbx -\bbx_E) \wedge \left[\bfomega_E\wedge(\bbx -\bbx_E)\right]~\dEl = \sum_{f\in \partial E}\int_f (\bbx-\bbx_E)\wedge \left(\bftau_h\bbn\right)_{f}~\df.
\end{equation}
%%
%% definition of the stress dofs
%%
Accordingly, for the local space $\Sigma_h(E)$ we can choose the following degrees of freedom:
\begin{itemize}
\item for each face $f$ of $\partial E$, we take the $3\pi_{k,f}$ boundary moments
\begin{equation}~\label{eq:stress_boundaryDofs}
\bftau_h \rightarrow \int_f \bftau_h\bbn\cdot \bbp_k~\df\qquad \forall \bbp_k \in \PolyTriple{k}{f}; %\frac{1}{|f|}
\end{equation}
\item for each element $E$, we consider the $\pi_{k,E}^{\perp}$ moments of the divergence
\begin{equation}~\label{eq:stress_divergenceDofs}
\bftau_h \rightarrow  \int_E \bdiv\bftau_h \cdot \bbr_k^{\perp}~\dEl \qquad \forall \bbr_k^{\perp} \in \RM_k^{\perp}(E). % \frac{1}{|E|}
\end{equation}
\end{itemize}
So, we infer that the dimension of the space~\eqref{space:localStress} is
\begin{equation*}
\dim\left(\Sigma_h(E)\right) := 3 n^E_f\,\pi_{k,f}  + \pi_{k,E}^{\perp}.
\end{equation*}
%\begin{prop}
%Let $\bftau_h\in\Sigma_h(E)$, then 
%\begin{equation}
%\left\{
%\begin{aligned}
%&\int_f \bftau_h\bbn\cdot \bbp_k~\df=0 \qquad \forall \bbp_k \in \PolyTriple{k}{f}\\
%&\int_E \bdiv\bftau_h \cdot \bbr_k^{\perp}~\dEl=0 \qquad \forall \bbr_k^{\perp} \in \RM_k^{\perp}(E). % \frac{1}{|E|}
%\end{aligned}
%\right.
%\end{equation}
%\end{prop}
\begin{remark}
The proof that the above linear operators constitute a set of degrees of freedom for $\Sigma_h(E)$ follows the same steps of Lemma 3.1. in~\cite{ARTIOLI2018978}.
\end{remark}
%%
%% -> local displacement space
%%
\paragraph{Displacement space} The local approximation space for the displacement field is simply defined by
\begin{equation}~\label{space:localDisplacement}
	U_h(E)=\left\{ \bbv_h \in \LTwoTriple{E} \ : \ \bbv_h \in\PolyTriple{k}{E}\right\}.
\end{equation}
Accordingly, for the local space $U_h(E)$ we can take the following degrees of freedom:
\begin{equation}~\label{eq:displacement_dofs}
	\bbv_h \rightarrow \int_E \bbv_h \cdot \bbp_k~\dEl, \qquad \forall \bbp_k \in \PolyTriple{k}{E}.
\end{equation}
It follows that the dimension of this space is $$\dim(U_h(E)):=3\pi_{k,E}.$$ 
% := (k+3)(k+2)(k+1)/2.$$
%%

\begin{figure}[tbh]
	\centering
	\includegraphics[width=0.3\linewidth]{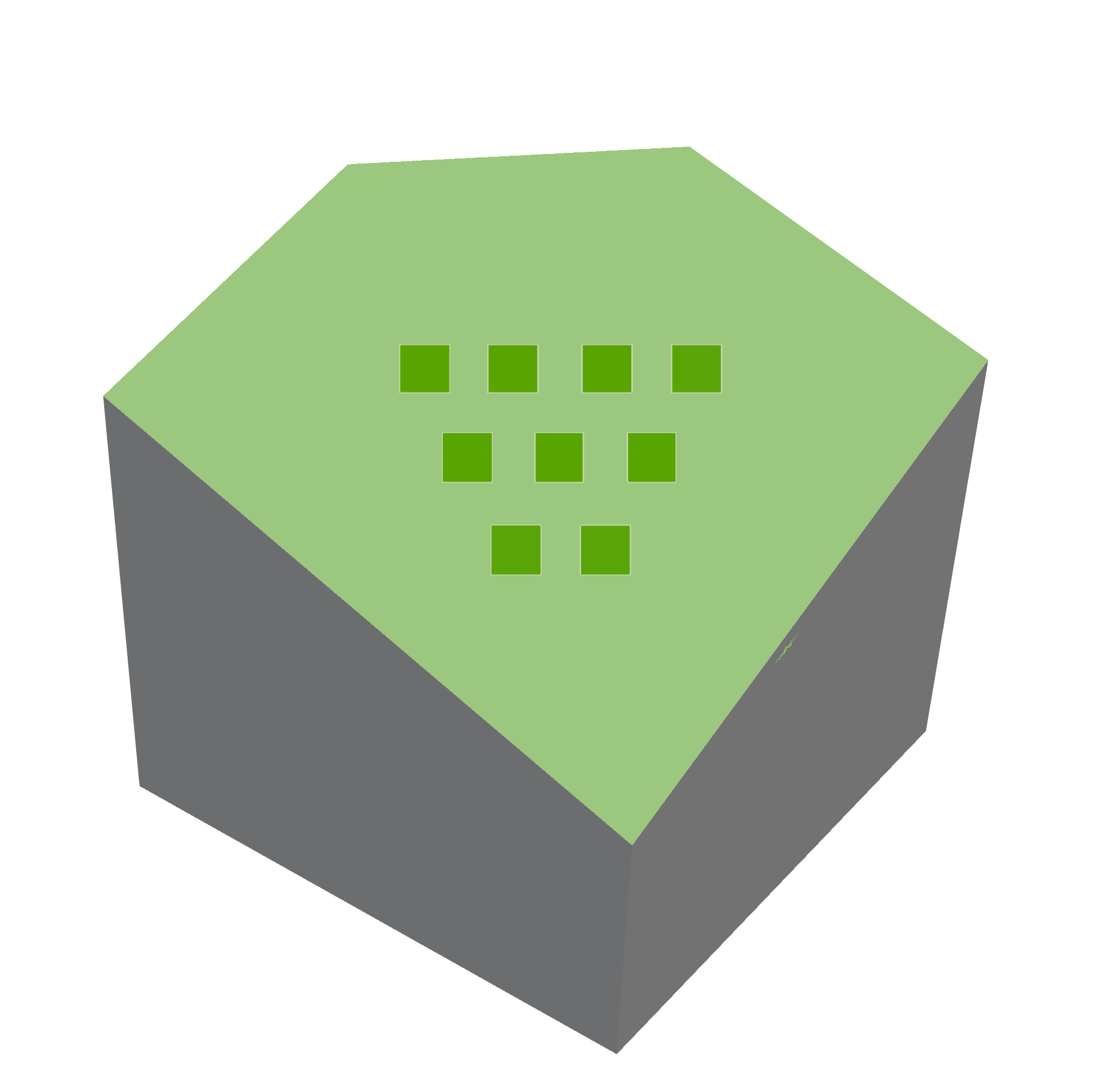}
	\quad
	\includegraphics[width=0.3\linewidth]{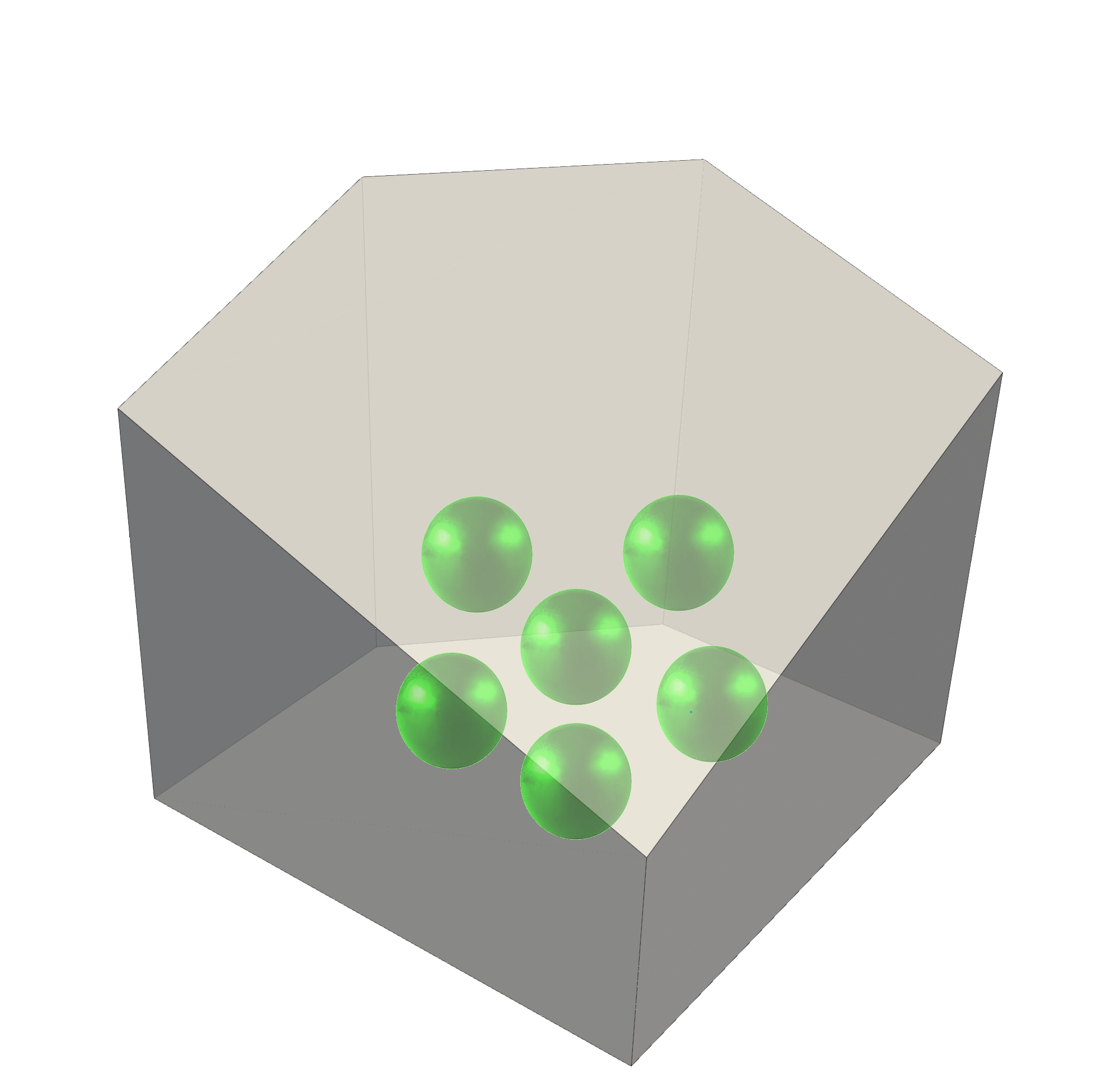}
	\quad
	\includegraphics[width=0.3\linewidth]{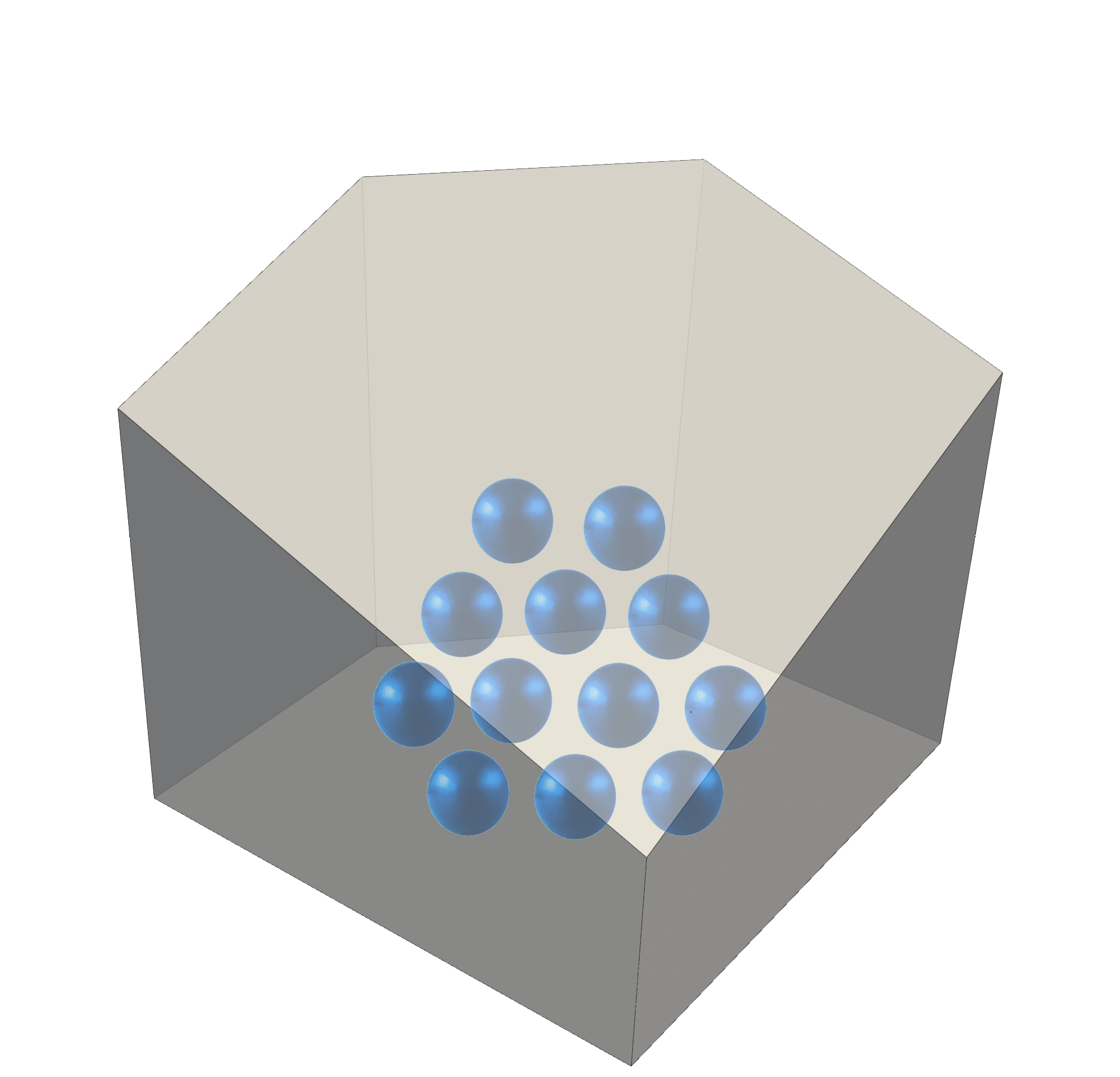}
	\caption{Overview of the local degrees of freedom for k=1.}
	%The degrees of freedom are denoted as follows: dark green squares for boundary moments of stress, green spheres for the moments of the divergence and blue spheres for displacement.}
	%		{Schematic description of the local degrees of freedom for $k=1$. In green the stress degrees of freedom, in blue the displacement degrees of freedom.}
	\label{fig:dofs}
\end{figure}

In Fig.~\ref{fig:dofs} we schematically depict the local degrees of freedom for both the stress and the displacement field, in the case $k=1$. More precisely we have that: on the left the dark green squares represent the boundary moments (cf.~\eqref{eq:stress_boundaryDofs}) on a fixed face $f$; in the middle, the green spheres represent the moments of the divergence (cf.~\eqref{eq:stress_divergenceDofs}); on the right, the blue spheres represent the displacement degrees of freedom (cf.~\eqref{eq:displacement_dofs}).
%%
%% ss -> the local bilinear forms
%%
\subsection{The local bilinear forms}~\label{ss:localBilinearForms}
We introduce the VEM counterparts of the local forms associated with the continuous problem.
\paragraph{The local mixed term $b_E(\cdot,\cdot)$} For every $\bftau_h\in\Sigma_h(E)$ and $\bbv_h\in U_h(E)$, the term
\begin{equation}~\label{eq:computabilityMixedTerm}
	b_E(\bftau_h,\bbv_h):=\int_E \bdiv\bftau_h \cdot \bbv_h~\dEl 
\end{equation} 
is computable via degrees of freedom. As a consequence, we do not need to introduce any approximation of the terms $(\bdiv \bftau, \bbu)$ and $(\bdiv \bfsigma, \bbv)$ in Problem~\eqref{problem:cont-weak}.
\paragraph{The local bilinear form $a_E(\cdot,\cdot)$} The local bilinear form
\begin{equation}\label{eq:discreteLocalBilinearForm}
	a_E(\bfsigma_h,\bftau_h)=\int_E\D \bfsigma_h \colon \bftau_h~\dEl
\end{equation}
is not computable for a general couple $(\bfsigma_h, \bftau_h)\in\Sigma_h(E)\times\Sigma_h(E)$.
Then, follows the standard VEM approach (see~\cite{volley}, for instance) in order to build a computable approximation of the bilinear form $a_E(\cdot,\cdot)$, we need to define a suitable projection operator onto local polynomial functions.
%% -> space

%%
For each element $E\in\Th$, we firstly introduce the local space
\begin{equation}~\label{eq:localHatSigma}
	\hat{\Sigma}(E):= \left\{\bftau \in H(\bdiv,E) \, : \,  \exists\bbw\in \left[H^1(E)\right]^3 \mbox{ s.t. } \bftau=\C\teps(\bbw)\right\}
\end{equation}
and the global space
\begin{equation}~\label{eq:globalHatSigma}
\hat{\Sigma}:= \left\{\bftau \in H(\bdiv,\O) \, : \,  \exists\bbw\in \left[H^1(\O)\right]^3 \mbox{ s.t. } \bftau=\C\teps(\bbw)\right\}.
\end{equation}
We define the local projection operator $$\Pi_E^k: \hat{\Sigma}(E)\rightarrow T_k(E)$$
by requiring
\begin{equation}~\label{eq:projectionDefinition}
%a_E(\Pi_E^k\bftau, \bs\pi_k) =\int_E \D\,\Pi_E^k\bftau :\bs\pi_k~\dEl = \int_E \D\,\bftau :\bs\pi_k ~\dEl = a_E(\bftau,\bs \pi_k) \quad \forall \bs\pi_k \in T_k(E),
a_E(\Pi_E^k\bftau, \bs\pi_k) = a_E(\bftau,\bs \pi_k) \quad \forall \bs\pi_k \in T_k(E),\end{equation}
with 
\begin{equation}~\label{eq:spaceTraction} 
T_k(E):=
%\C \teps\left(\PolyTriple{k+1}{E}\right)=
\left\{\C \teps(\bbp_{k+1}) \, : \, \bbp_{k+1} \in\PolyTriple{k+1}{E}\right\}.
\end{equation}
\begin{remark}
Alternatively to the condition~\eqref{eq:projectionDefinition}, one may find  $\bbp_{k+1}\in\PolyTriple{k+1}{E}$ such that 
\begin{equation}~\label{eq:projectionDefinition1}
\int_{E} \C \teps (\bbp_{k+1}) \, : \, \teps(\bbq_{k+1})~\dEl = \int_{E} \bftau \, : \, \teps(\bbq_{k+1})~\dEl, \qquad \forall \bbq_{k+1}\in\PolyTriple{k+1}{E}
\end{equation}
where the polynomial functions $\bbp_{k+1}$ are defined up to a rigid body motion.
\end{remark}
Then, the approximation of $a_E(\cdot,\cdot)$ reads
\begin{equation}\label{eq:localDiscreteBilinearForm}
\begin{aligned}
	a_E^h(\bfsigma_h,\bftau_h) &= a_E(\Pi_E^k \bfsigma_h, \Pi_E^k \bftau_h) + s_E((I -\Pi_E^k)\bfsigma_h,(I -\Pi_E^k)\bftau_h) \\
	&= \int_E \D \Pi_E^k\bfsigma_h \, :\, \Pi_E^k\bftau_h~\dEl + s_E((I -\Pi_E^k)\bfsigma_h,(I -\Pi_E^k)\bftau_h) \\
\end{aligned}
\end{equation}
where $s_E(\cdot,\cdot)$ is a suitable stabilization term. We propose
\begin{equation}\label{eq:stabilization}
	s_E(\bfsigma_h,\bftau_h)= \kappa_E h_E \int_{\partial E} \bfsigma_h \bbn \cdot \bftau_h \bbn~\df
\end{equation}
where $\kappa_E$ is a positive constant. For instance, in the numerical examples of Section~\ref{section:numerical_results} we take $\kappa_E=\frac{1}{2}tr(\D_{|E})$).
\begin{remark}
    A possible variant of~\eqref{eq:stabilization} is supplied by
    \begin{equation}
    s_E(\bfsigma_h,\bftau_h)= \kappa_E |E| \sum_{f\in\partial E} \frac{1}{h_f}\int_{f} \bfsigma_h \bbn \cdot \bftau_h \bbn~\df,
    \end{equation}
    which could better mimic the shape of the element.
\end{remark}

\paragraph{The local loading term}
The loading term, see~\eqref{problem:cont-weak}, is simply:
\begin{equation}~\label{eq:loadingTerm}
	(\bbf,\bbv_h):= \int_{\Omega} \bbf\cdot \bbv_h~\dO = \sum_{E\in\Th} \int_{E}\bbf\cdot\bbv_h~\dEl.
\end{equation}
Since $\bbv_h\in\PolyTriple{k+1}{E}$, the right-hand side is computable via a suitable quadrature rule for polyhedral domains, see, i.e.,~\cite{Sommariva2009}.
%%
%% ss: discrete scheme
%%
\subsection{The discrete scheme}
Starting from the local spaces and the local terms introduced in the previous subsections, we can set the global problem.
First of all, we introduce the global approximation space for the stress field, by glueing the local approximation spaces, see~\eqref{space:localStress}:
\begin{equation}\label{space:globalStress}
\Sigma_h :=\left\{\bftau_h \in H(\bdiv; \Omega) \, : \, \bftau_{h_{|E}}\in\Sigma_h(E) \quad \forall E \in \Th\right\}.
\end{equation}
As it is well known, since $\Sigma_h\subseteq H(\bdiv;\Omega)$, we require the continuity of the stress boundary degrees of freedom on each internal interface of the mesh $\Th$. Therefore, give an internal face, we establish once and for all its normal vector and then we uniquely define the corresponding functional~\eqref{eq:stress_boundaryDofs}. Instead, for the functional~\eqref{eq:stress_divergenceDofs} no inter-element continuity is required.
For the global approximation of the displacement field, we take, see~\eqref{space:localDisplacement}:
\begin{equation}\label{space:globalDisplacement}
 U_h=\left\{\bbv_h \in \LTwoTriple{\Omega} \, : \, \bbv_{h_{|E}} \in U_h(E) \quad \forall E\in\Th  \right\}.
\end{equation}
Finally, given a local approximation of $a_E(\cdot,\cdot)$, see~\eqref{eq:localDiscreteBilinearForm}, we set 
\begin{equation}
	a_h(\bfsigma_h,\bftau_h) := \sum_{E\in\Th} a_E^{h}(\bfsigma_h,\bftau_h).
\end{equation}
The method we consider is then defined by
\begin{equation}\label{problem:discrete}
\left\lbrace{
	\begin{aligned}
		&\mbox{find } (\bfsigma_h,\bbu_h)\in\Sigma_h\times U_h~\mbox{such that} &\\
		&a_h(\bfsigma_h,\bftau_h) + b(\bftau_h,\bbu_h)  =\bfzero &  \forall \bftau_h \in\Sigma_h,\\
		& b(\bfsigma_h,\bbv_h) = -(\bbf,\bbv_h) & \forall \bbv_h \in U_h.
	\end{aligned}
} \right.
\end{equation}
%%
	%%
%% Section Error Analysis
%%
\section{Stability and convergence analysis}~\label{section:StabilityConvergenceAnalysis}
Since some results of the analysis follows the guidelines of the theory developed in~\cite{ARTIOLI2017155,ARTIOLI2018978} for 2D problems, in this section we do not provide full details of the proofs. From now on, for sake of simplicity, we will consider the problem only with homogeneous natural boundary conditions. Firstly, we introduce this useful regular space.
%

%
% -> space
Given a measurable subset $D\subseteq \Omega$ and $r>2$, we define 
\begin{equation}
	W^r(D):=\left\{\bftau \ : \ \bftau \in \LpTensor[r]{D}, \quad \bdiv\bftau \in\LTwoTriple{D} \right\},
\end{equation}
equipped with the obvious norm.
\subsection{Interpolation operators for stresses}
We now introduce a local interpolation operator $\I_E: W^r(E)\rightarrow \Sigma_h(E)$.
Given $\bftau\in W^r(E)$, we define its interpolant $\I_E\bftau\in\Sigma_h(E)$ such that
\begin{equation}\label{eq:systemInterpolationOperator}
\left\{
    \begin{aligned}
        &\int_{\partial E} \left(\I_E \bftau \right)\bbn \cdot \bs\varphi_k~\df =\int_{\partial E}\bftau \bbn \cdot \bs\varphi_k~\df  \qquad &\forall\bs\varphi_k\in R_k(\partial E), \\ 
        &\int_{E} \bdiv(\I_E \bftau ) \cdot \bs\psi_k~\dEl=  \int_{E} \bdiv(\I_E \bftau ) \cdot \bs\psi_k~\dEl \qquad &\forall \bs\psi_k \in \RM_k^{\perp}(E), 
    \end{aligned}
\right.
\end{equation}
%\begin{equation}~\label{eq:systemInterpolationOperator}
%	\left\{
%	\begin{aligned}
%	&\int_{\partial E} \left(\I_E \bftau \right)\bbn \cdot \bs\varphi_k~\df =\int_{\partial E}\bftau \bbn \cdot \bs\varphi_k~\df  \qquad &\forall\bs\varphi_k\in R_k(\partial E), \label{sss}
% \\	&\int_{E} \bdiv(\I_E \bftau ) \cdot \bs\psi_k~\dEl=  \int_{E} \bdiv(\I_E \bftau ) \cdot \bs\psi_k~\dEl \qquad &\forall \bs\psi_k \in \RM_k^{\perp}(E),
%	\end{aligned}
%	\right.
%\end{equation}
where the space $R_k(\partial E)$ is defined by
\begin{equation}
	R_k(\partial E)= \left\{\bs\varphi_k\in\LTwoTriple{\partial E} \, : \, \bs\varphi_{k|f}\in\PolyTriple{k}{f}, \quad \forall f \in \partial E \right\}.
\end{equation}
\begin{remark}
If $\bftau$ is a regular function, the first condition of~\eqref{eq:systemInterpolationOperator} is equivalent to require
\begin{equation}
		%\left\{
	%\begin{aligned}
	\int_{f} \left(\I_E \bftau\right) \bbn\cdot \bbq_k~\df =\int_f \bftau \bbn\cdot \bbq_k~\df \qquad \forall \bbq_k\in\PolyTriple{k}{f};
%	\\	&\int_{E} \bdiv(\I_E \bftau ) \cdot \bs\psi_k = \int_E \bdiv \bftau \cdot \bs\psi_k~\dEl \qquad \forall \bs\psi_k\in\RM_k^{\perp}(E), 
%	\end{aligned}
%	\right.
\end{equation}
otherwise, the integral of the right-hand side in~\eqref{eq:systemInterpolationOperator} must be interpreted as a duality between $\left[W^{-\frac{1}{r},r}(\partial E)\right]^3$ and $\left[W^{\frac{1}{r},r'}(\partial E)\right]^3$. 
\end{remark}
%Immediately, we notice that if $\bftau$ is a regular function, the first condition~\eqref{eq:systemInterpolationOperator1} is equivalent to require
%\begin{equation}
		%\left\{
	%\begin{aligned}
%	\int_{f} \left(\I_E \bftau\right) \bbn\cdot \bbq_k~\df =\int_f \bftau \bbn\cdot \bbq_k~\df \qquad \forall \bbq_k\in\PolyTriple{k}{f}.
%	\\	&\int_{E} \bdiv(\I_E \bftau ) \cdot \bs\psi_k = \int_E \bdiv \bftau \cdot \bs\psi_k~\dEl \qquad \forall \bs\psi_k\in\RM_k^{\perp}(E), 
%	\end{aligned}
%	\right.
%\end{equation}
%for each face $f\in\partial E$. Instead, if $\bftau$ is not regular enough, the integral of the right-hand side in~\eqref{eq:systemInterpolationOperator1} must be interpreted as a duality between $\left[W^{-\frac{1}{r},r}(\partial E)\right]^3$ and $\left[W^{\frac{1}{r},r'}(\partial E)\right]^3$. 
%
Due to the unisolvence of degrees of freedom,  the local interpolant $\I_E\bftau$ is well-defined by the conditions in~\eqref{eq:systemInterpolationOperator}. The global interpolation operator $\I_h: W^r(\O)\rightarrow \Sigma_h$ is simply defined by gluing the local contributions $\I_E$ as follows
\begin{equation}
\left( \I_h\bftau \right)_{|E} := \I_E\bftau, \quad \forall E\in\Th, \,\, \forall \bftau\in W^r(\O).
\end{equation}
Moreover, due to its definition, the commuting diagram property holds

\begin{figure}[hbt]
    \centering
      \begin{tikzpicture}[->,>=stealth',shorten >=1pt,auto,node distance=2cm,thick, main node/.style={font=\sffamily\large\bfseries}]
  \node[main node] (1) {$\Sigma$};
  \node[main node] (2) [right of=1] {$U$};
  \node[main node] (3) [below of=1] {$\Sigma_h$};
  \node[main node] (4) [below of=2] {$U_h$};
  \node[main node] (5) [right of=2] {$\bfzero$};
  \node[main node] (6) [right of=4] {$\bfzero$};
  \path[every node/.style={font=\sffamily\small}]
    (1) edge node [above] {$\bdiv$} (2)
        edge node [left] {$\I_h$} (3)
    (2) edge node [right] {$\P_h^k$} (4)
	    edge node [above] {} (5)
    (3) edge node [below] {$\bdiv$} (4)
    (4) edge node [below] {} (6);
\end{tikzpicture}
\end{figure}\label{fig:diagram}

so that 
% is satisfied. For this purpose we introduce the following proposition.
\begin{equation}~\label{eq:commuting_diagram_property}
		\bdiv(\I_h\bftau) = \P_h^k(\bdiv\bftau) \qquad \forall \bftau \in W^r(\O)
\end{equation}
where $\P_h^k:U\rightarrow U_h$ denotes the $L^2$-projection operator onto the piecewise polynomial functions of degree up to $k$.
\subsection{The ellipticity-on-the-kernel and the inf-sup condition}
By definition of the discrete spaces~\eqref{space:localStress}, \eqref{space:globalStress} and~\eqref{space:localDisplacement}, \eqref{space:globalDisplacement}, we notice that:
\begin{equation}
	\bdiv(\Sigma_h) \subseteq U_h.
\end{equation}
As a consequence, introducing the discrete kernel $K_h\subseteq\Sigma_h$:
\begin{equation}
	K_h=\left\{ \bftau_h \in\Sigma_h \ : \ (\bdiv \bftau_h, \bbv_h)=0 \quad \forall v_h\in U_h \right\},
\end{equation}
we infer that $K_h\subseteq K$, where 
\begin{equation}
    K=\left\{\bftau \in \Sigma \ : \ \left(\bdiv \bftau, \bbv \right) \quad \forall \bbv \in U \right\}.
\end{equation}
Hence, it holds:
\begin{equation}
	\norm[\Sigma]{\bftau_h} = \norm[0]{\bftau_h}\qquad \forall \bftau_h\in K_h.
\end{equation}
This is essentially the property which leads to the ellipticity on the kernel condition:
\begin{prop}
Fixed $k\geq 1$, for the proposed method, there exists a constant $\alpha_*>0$ such that
\begin{equation}
	a_h(\bftau_h, \bftau_h) \geq \alpha_* \norm[\Sigma]{\bftau_h}^2 \qquad \bftau_h \in K_h.
\end{equation}
\end{prop}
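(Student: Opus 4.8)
The plan is to exploit the standard VEM argument combining the polynomial consistency of $a_h$ with the stability of the stabilization term, plus the crucial identity $\norm[\Sigma]{\bftau_h} = \norm[0]{\bftau_h}$ that holds on $K_h$ because $\bdiv(\Sigma_h)\subseteq U_h$ forces $\bdiv\bftau_h = \bfzero$ for every $\bftau_h\in K_h$. Thus it suffices to prove $a_h(\bftau_h,\bftau_h)\gtrsim \norm[0]{\bftau_h}^2$ for all $\bftau_h\in\Sigma_h$, element by element, i.e.\ $a_E^h(\bftau_h,\bftau_h)\gtrsim \norm[0,E]{\bftau_h}^2$. First I would recall, following~\cite{volley,ARTIOLI2018978}, the two usual ingredients: (i) \emph{consistency}, $a_E^h(\bs\pi,\bs\pi)=a_E(\bs\pi,\bs\pi)$ for $\bs\pi\in T_k(E)$, which is immediate because $\Pi_E^k$ fixes $T_k(E)$ and the stabilization kills the remainder; and (ii) \emph{stability of the stabilizer}, namely that there are constants $0<c_*\le c^*$, depending only on $\gamma$, $k$ and $\D$, with
\begin{equation}\label{eq:stab-equiv}
c_* \, a_E(\bftau_h,\bftau_h) \le s_E(\bftau_h,\bftau_h) \le c^* \, a_E(\bftau_h,\bftau_h) \qquad \forall \bftau_h\in \ker(\Pi_E^k)\cap\Sigma_h(E).
\end{equation}
Granting (i) and (ii), one splits $\bftau_h = \Pi_E^k\bftau_h + (I-\Pi_E^k)\bftau_h$, uses the definition~\eqref{eq:localDiscreteBilinearForm} of $a_E^h$, the positive-definiteness of $\D$ on the polynomial part, and~\eqref{eq:stab-equiv} on the complement, to obtain $a_E^h(\bftau_h,\bftau_h)\gtrsim \norm[0,E]{\Pi_E^k\bftau_h}^2 + \norm[0,E]{(I-\Pi_E^k)\bftau_h}^2 \simeq \norm[0,E]{\bftau_h}^2$, where the last equivalence needs a (standard) bound $\norm[0,E]{\Pi_E^k\bftau_h}\lesssim\norm[0,E]{\bftau_h}$ for the $a_E$-projection, provable by a Cauchy--Schwarz argument on~\eqref{eq:projectionDefinition} together with the uniform bounds on $\D=\C^{-1}$.

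The heart of the matter, and the main obstacle, is establishing the lower bound in~\eqref{eq:stab-equiv}: one must show that the boundary-traction energy $h_E\int_{\partial E}|\bftau_h\bbn|^2\,\df$ controls $\norm[0,E]{\bftau_h}^2$ for $\bftau_h\in\Sigma_h(E)$. This is the genuinely three-dimensional, non-polynomial part of the estimate, because $\bftau_h$ is virtual inside $E$. The strategy is the one used in~\cite{ARTIOLI2018978,DLV}: write $\bftau_h = \C\teps(\bbw^\ast)$ for some $\bbw^\ast\in[H^1(E)]^3$ (normalized modulo $\RM(E)$), and bound $\norm[0,E]{\bftau_h}\simeq \norm[0,E]{\teps(\bbw^\ast)}$ via the uniform spectral bounds on $\C$; then use that $\bbw^\ast$ solves, weakly, the elasticity problem $-\bdiv(\C\teps(\bbw^\ast))=-\bdiv\bftau_h\in\PolyTriple{k}{E}$ with Neumann datum $(\C\teps(\bbw^\ast))\bbn = (\bftau_h\bbn)_{|f}\in\PolyTriple{k}{f}$, and invoke the a priori/trace estimate for the Neumann elasticity problem on $E$ (with constants controlled by~\ref{meshA_1}--\ref{meshA_3} through a scaling argument to the reference configuration) to bound $\norm[0,E]{\teps(\bbw^\ast)}$ by the boundary traction norm plus a scaled norm of $\bdiv\bftau_h$. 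The latter volume term is then reabsorbed: $\bdiv\bftau_h = \bbd_{RM}+\bbd_k^\perp$, the $\RM(E)$-part $\bbd_{RM}$ is controlled by the boundary data through~\eqref{eq:divergenceRM}--\eqref{eq:div2}, and the $\RM_k^\perp$-part, although an internal degree of freedom, is still dominated by $\norm[0,E]{\bftau_h}$ via a scaled Poincaré-type inequality, so a small multiple of it can be absorbed into the left-hand side.

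Finally I would assemble: summing the local estimate over $E\in\Th$ gives $a_h(\bftau_h,\bftau_h)\gtrsim \sum_E \norm[0,E]{\bftau_h}^2 = \norm[0]{\bftau_h}^2$ for all $\bftau_h\in\Sigma_h$, and restricting to $\bftau_h\in K_h$ and using $\norm[0]{\bftau_h}=\norm[\Sigma]{\bftau_h}$ yields the claim with $\alpha_* $ depending only on $\gamma$, $k$, and the spectral bounds of $\D$. I expect the verification of the Neumann a priori estimate with mesh-independent constants (i.e.\ the scaling argument under~\ref{meshA_1}--\ref{meshA_3}) to be the most technical point, but it is entirely analogous to the two-dimensional treatment in~\cite{ARTIOLI2018978} and to the lowest-order case in~\cite{DLV2021}, so I would state it and refer there for the details rather than reproduce the computation.
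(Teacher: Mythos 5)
Your overall strategy is the right one and is, in structure, exactly what the paper intends (the paper itself only records the identity $\norm[\Sigma]{\bftau_h}=\norm[0]{\bftau_h}$ on $K_h$ and defers the rest to the two-dimensional references): reduce to the local bound $a_E^h(\bftau_h,\bftau_h)\gtrsim\norm[0,E]{\bftau_h}^2$, split into the polynomial part $\Pi_E^k\bftau_h$, handled by the positive definiteness of $\D$, and the remainder in $\ker(\Pi_E^k)\cap\Sigma_h(E)$, handled by a lower bound for $s_E$ that ultimately rests on the a priori estimate for the auxiliary Neumann elasticity problem (the same Lemma~5.1 of the 2D paper invoked in~\eqref{eq:estimateLemma5.1}). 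The continuity bound for $\Pi_E^k$ and the final summation over elements are also fine.

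There is, however, one concrete gap: the reabsorption of the $\RM_k^{\perp}(E)$-component of the divergence does not work as you state it. The a priori estimate gives
\begin{equation*}
\norm[0,E]{\bftau_h}\lesssim h_E\,\norm[0,E]{\bdiv\bftau_h}+h_E^{1/2}\,\norm[0,\partial E]{\bftau_h\bbn},
\end{equation*}
and the best generic bound for the internal part is $h_E\norm[0,E]{\bbd_k^{\perp}}\leq C\,\norm[0,E]{\bftau_h}+C\,h_E^{1/2}\norm[0,\partial E]{\bftau_h\bbn}$ with $C$ of order one (it comes from an integration by parts followed by polynomial inverse and trace estimates), so there is no small parameter and the term cannot be absorbed into the left-hand side. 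The correct mechanism uses the very condition that defines the space on which you need the bound: if $\Pi_E^k\bftau_h=\bfzero$, then testing~\eqref{eq:projectionDefinition} with $\bs\pi_k=\C\teps(\bbp_{k+1})$ and integrating by parts gives
\begin{equation*}
\int_E\bdiv\bftau_h\cdot\bbp_{k+1}~\dEl=\int_{\partial E}\bftau_h\bbn\cdot\bbp_{k+1}~\df\qquad\forall\,\bbp_{k+1}\in\PolyTriple{k+1}{E};
\end{equation*}
choosing $\bbp_{k+1}=\bdiv\bftau_h\in\PolyTriple{k}{E}\subseteq\PolyTriple{k+1}{E}$ and using a polynomial trace inverse estimate yields $h_E\norm[0,E]{\bdiv\bftau_h}\lesssim h_E^{1/2}\norm[0,\partial E]{\bftau_h\bbn}$, which closes the stabilizer lower bound with no absorption at all. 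Equivalently, one may argue directly on $K_h$, where $\bdiv\bigl((I-\Pi_E^k)\bftau_h\bigr)=-\bdiv\bigl(\Pi_E^k\bftau_h\bigr)$ is a polynomial controlled by $h_E^{-1}\norm[0,E]{\Pi_E^k\bftau_h}$, i.e.\ by the consistent part of the energy, which already sits on the good side of the inequality. With this replacement your proof goes through.
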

Moreover, as a consequence of the commuting diagram property, see~\eqref{eq:commuting_diagram_property}, and the theory developed in~\cite{ARTIOLI2017155,BoffiBrezziFortin}, the following discrete inf-sup condition holds.
\begin{prop} Fix the integer $k\geq 1$. Suppose that the mesh assumptions~\ref{meshA_1}, \ref{meshA_2} and \ref{meshA_3} are fulfilled. There exists $\beta > 0$, independent of $h$, such that 
\begin{equation}
\inf_{\bbv_h\in U_h} \sup_{\bftau_h\in\Sigma_h} \frac{b(\bftau_h,\bbv_h)}{\norm[U]{\bbv_h}\norm[\Sigma]{\bftau_h}}\geq \beta.    
\end{equation}
\end{prop}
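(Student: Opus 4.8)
The plan is to exploit the commuting diagram property~\eqref{eq:commuting_diagram_property} to transfer the continuous inf-sup condition to the discrete level via a Fortin-type argument. Fix $\bbv_h\in U_h$; since $U_h\subseteq U=[L^2(\Omega)]^3$, the continuous surjectivity of $\bdiv\colon\Sigma\to U$ (equivalently, the continuous inf-sup condition stated just after Problem~\eqref{problem:cont-weak}) gives us a stress field $\bftau\in\Sigma$ with $\bdiv\bftau=\bbv_h$ and $\norm[\Sigma]{\bftau}\lesssim\norm[U]{\bbv_h}$. The first technical point is that $\bftau$ is in general only in $H(\bdiv;\Omega)$, not in the more regular space $W^r(\Omega)$ on which the interpolant $\I_h$ is defined; I would circumvent this exactly as in~\cite{ARTIOLI2017155,ARTIOLI2018978}, either by an elliptic-regularity / density argument that produces a representative $\bftau$ with the extra integrability $\bftau\in\LpTensor[r]{\Omega}$ for some $r>2$ while keeping the bound $\norm[W^r]{\bftau}\lesssim\norm[U]{\bbv_h}$, or by first smoothing $\bbv_h$ and correcting. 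This is the step I expect to be the main obstacle, since it is where the mesh assumptions~\ref{meshA_1}--\ref{meshA_3} and the polyhedral geometry really enter through the stability of $\I_E$.

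Next I would set $\bftau_h:=\I_h\bftau\in\Sigma_h$ and verify the two Fortin properties. For the commutation, the commuting diagram property~\eqref{eq:commuting_diagram_property} gives
\begin{equation*}
\bdiv\bftau_h=\bdiv(\I_h\bftau)=\P_h^k(\bdiv\bftau)=\P_h^k\bbv_h=\bbv_h,
\end{equation*}
the last equality because $\bbv_h$ is already a piecewise polynomial of degree $\leq k$. Consequently
\begin{equation*}
b(\bftau_h,\bbv_h)=\int_\Omega\bdiv\bftau_h\cdot\bbv_h~\dO=\norm[U]{\bbv_h}^2.
\end{equation*}
For the boundedness, I would prove the interpolation estimate $\norm[\Sigma]{\I_E\bftau}\lesssim\norm[W^r(E)]{\bftau}$ on each element, summing to $\norm[\Sigma]{\bftau_h}\lesssim\norm[W^r(\Omega)]{\bftau}\lesssim\norm[U]{\bbv_h}$; here one uses that $\norm[\Sigma]{\I_E\bftau}^2=\norm[0,E]{\I_E\bftau}^2+\norm[0,E]{\bdiv\I_E\bftau}^2$, controls the divergence term by $\norm[0,E]{\P_h^k\bdiv\bftau}\le\norm[0,E]{\bdiv\bftau}$ via the commuting diagram, and controls the $L^2$-norm of $\I_E\bftau$ itself by a scaling/trace argument through its defining degrees of freedom~\eqref{eq:stress_boundaryDofs}--\eqref{eq:stress_divergenceDofs}, invoking the norm-equivalence between a virtual function and its dofs that follows from assumptions~\ref{meshA_1}--\ref{meshA_3}. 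The precise constants here are the ones I would not grind out, pointing instead to the 2D analogues in~\cite{ARTIOLI2017155,ARTIOLI2018978}.

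Finally I would assemble: for every $\bbv_h\in U_h$ there is $\bftau_h\in\Sigma_h$ with $b(\bftau_h,\bbv_h)=\norm[U]{\bbv_h}^2$ and $\norm[\Sigma]{\bftau_h}\le C_F\norm[U]{\bbv_h}$, with $C_F$ independent of $h$, whence
\begin{equation*}
\sup_{\bfeta_h\in\Sigma_h}\frac{b(\bfeta_h,\bbv_h)}{\norm[\Sigma]{\bfeta_h}}\geq\frac{b(\bftau_h,\bbv_h)}{\norm[\Sigma]{\bftau_h}}\geq\frac{\norm[U]{\bbv_h}^2}{C_F\norm[U]{\bbv_h}}=\frac{1}{C_F}\norm[U]{\bbv_h},
\end{equation*}
and dividing by $\norm[U]{\bbv_h}$ and taking the infimum over $\bbv_h$ yields the claim with $\beta=1/C_F$. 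The only genuinely new ingredient relative to the 2D theory is the three-dimensional geometry entering the stability of $\I_E$, and it is precisely there that assumptions~\ref{meshA_1}--\ref{meshA_3} and the rigid-body-motion decomposition~\eqref{space:decomposition_orth_poly} are used.
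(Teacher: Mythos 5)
Your proposal is correct and follows exactly the route the paper itself indicates: the paper gives no explicit proof but derives the result from the commuting diagram property~\eqref{eq:commuting_diagram_property} together with the Fortin-operator theory of~\cite{ARTIOLI2017155,BoffiBrezziFortin}, which is precisely your construction (auxiliary continuous problem, interpolation, commutation to get $\bdiv\bftau_h=\bbv_h$, and $h$-uniform stability of $\I_h$ under~\ref{meshA_1}--\ref{meshA_3}). You also correctly identify the genuine technical points deferred to the references, namely the extra integrability needed to apply $\I_h$ and the dof-based stability bound for the virtual interpolant.
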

%%
%% approximation estimates
%%
\subsection{Local approximation estimates}
For the local projection operator $\Pi_E^k$, see~\eqref{eq:projectionDefinition}, using similar steps detailed in~\cite{ARTIOLI2018978}, one can prove the following result.
%%
%% error estimates for interpolation
%%
\begin{prop}
	Fixed $k\geq 1$ and let $r$ be such that $0\leq r\leq k+1$. Under assumptions \ref{meshA_1}, \ref{meshA_2} and \ref{meshA_3}, for the projection operator $\Pi_E^k$ defined in~\eqref{eq:projectionDefinition}, the following estimate holds:
	\begin{equation}
		\norm[0,E]{\bftau -\Pi_E^k\bftau}\lesssim h_E^r \seminorm[{r},E]{\bftau} \qquad \forall \bftau\in \hat{\Sigma}_h(E)\cap \left[H^r(E)\right]^{3\times 3}_s.
	\end{equation}
\end{prop}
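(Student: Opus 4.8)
The plan is to use that, since $\D$ is symmetric, uniformly bounded and positive definite, the local form $a_E(\cdot,\cdot)$ is an inner product on $\left[L^2(E)\right]^{3\times 3}_s$ uniformly equivalent to the $L^2$-inner product, the equivalence constants depending only on the spectral bounds of $\D$ (hence independent of $E$ and $h$). Therefore $\Pi_E^k$ is exactly the $a_E$-orthogonal projection onto the finite-dimensional subspace $T_k(E)$, and it enjoys the best-approximation property
\[
\norm[0,E]{\bftau-\Pi_E^k\bftau}\ \lesssim\ \inf_{\bs\pi_k\in T_k(E)}\norm[0,E]{\bftau-\bs\pi_k},
\]
with hidden constant depending only on $\D$. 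For $r=0$ the assertion is immediate (take $\bs\pi_k=\bfzero$); from now on I take $r$ to be a positive integer, the remaining values of $r$ following by standard Sobolev interpolation.

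It thus remains to exhibit one good element of $T_k(E)=\left\{\C\teps(\bbp_{k+1})\,:\,\bbp_{k+1}\in\PolyTriple{k+1}{E}\right\}$. Here I use the defining structure of $\hat{\Sigma}(E)$: write $\bftau=\C\teps(\bbw)$ with $\bbw\in\left[H^1(E)\right]^3$. Since $\D$ is sufficiently regular and $\bftau\in\left[H^r(E)\right]^{3\times 3}_s$, from $\teps(\bbw)=\D\bftau$ and the Leibniz rule we get $\teps(\bbw)\in\left[H^r(E)\right]^{3\times 3}_s$ with $\seminorm[{r},E]{\teps(\bbw)}\lesssim\seminorm[{r},E]{\bftau}$, the constant depending on $\D$ (and the bound being trivial whenever $\D$ is elementwise constant). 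Because $\teps(\bbw)$ is a genuine symmetric gradient, the pointwise algebraic identity that expresses each second derivative of $\bbw$ as a linear combination of first derivatives of the entries of $\teps(\bbw)$ --- the identity underlying Korn's inequality --- yields $\bbw\in\left[H^{r+1}(E)\right]^3$ together with
\[
\seminorm[{r+1},E]{\bbw}\ \lesssim\ \seminorm[{r},E]{\teps(\bbw)},
\]
with a constant independent of $E$ and $h$ (the identity being pointwise, no lower-order term enters).

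Finally, let $\bbp_{k+1}\in\PolyTriple{k+1}{E}$ be the averaged Taylor polynomial of $\bbw$ of degree $k+1$ built on the ball $B_E$. By assumption~\ref{meshA_1}, $E$ is star-shaped with respect to $B_E$, whose radius is $\geq\gamma h_E$, so the Dupont--Scott/Bramble--Hilbert estimates hold with constants depending only on $\gamma$ and $k$; since $1\leq r+1\leq k+2$ (this is precisely where $0\leq r\leq k+1$ enters) we obtain $\seminorm[{1},E]{\bbw-\bbp_{k+1}}\lesssim h_E^{r}\seminorm[{r+1},E]{\bbw}$. Choosing $\bs\pi_k:=\C\teps(\bbp_{k+1})\in T_k(E)$ and using $\norm[0,E]{\C\teps(\bbv)}\lesssim\seminorm[{1},E]{\bbv}$, we get
\[
\norm[0,E]{\bftau-\bs\pi_k}=\norm[0,E]{\C\big(\teps(\bbw)-\teps(\bbp_{k+1})\big)}\lesssim\seminorm[{1},E]{\bbw-\bbp_{k+1}}\lesssim h_E^{r}\seminorm[{r+1},E]{\bbw}\lesssim h_E^{r}\seminorm[{r},E]{\bftau},
\]
and combining this with the best-approximation bound of the first step concludes the argument.

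The step I expect to be the main obstacle --- exactly as in the two-dimensional analysis of~\cite{ARTIOLI2018978} --- is the passage from $\bftau$ to a polynomial element $\bs\pi_k\in T_k(E)$: an arbitrary symmetric tensor cannot be approximated by $\teps$ of a polynomial, since the strain has to be compatible, so the argument must pass through the displacement potential $\bbw$ and invoke a Korn-type estimate; keeping every constant uniform over the mesh family then rests on the star-shapedness hypothesis~\ref{meshA_1} for the polynomial approximation and on the uniform spectral bounds of $\D$ for the norm equivalence. A secondary point is a genuinely variable coefficient $\D$, which would introduce lower-order terms $\seminorm[{r-1},E]{\bftau}$ unless $\D$ is elementwise constant.
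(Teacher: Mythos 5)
Your argument is correct and is essentially the route the paper itself intends: the paper omits the proof and defers to the two-dimensional analysis of~\cite{ARTIOLI2018978}, which proceeds exactly as you do — quasi-optimality of $\Pi_E^k$ in $L^2$ from the uniform spectral equivalence of $a_E(\cdot,\cdot)$ with the $L^2$ inner product, followed by approximation of the displacement potential $\bbw$ (with $\bftau=\C\teps(\bbw)$) by a degree-$(k+1)$ averaged Taylor polynomial on the star-shaped element, using the pointwise Korn-type identity to control $\seminorm[r+1,E]{\bbw}$ by $\seminorm[r,E]{\teps(\bbw)}$. Your closing caveats (the need to pass through the compatible potential $\bbw$, and the appearance of lower-order terms for genuinely variable $\D$) are both accurate and worth keeping.
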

Instead, for the local interpolation operator $\I_E$ (cf.~\eqref{eq:systemInterpolationOperator}) we have the following result.
\begin{prop}
	Fix $k\geq 1$, let $r$ be such that $1\leq r \leq k+1$. Under assumptions~\ref{meshA_1}, \ref{meshA_2} and \ref{meshA_3}. Assuming $\bftau\in\hat{\Sigma}(E)\cap \left[ H^r(E)\right]^{3 \times 3}_s$ and $\bdiv\bftau \in \left[ H^r(E)\right]^3$, the following estimates hold true:
	\begin{equation}\label{eq:interpolationEstimate1}
		\norm[0,E]{\bftau -\I_E \bftau} \lesssim h_E^r |\bftau|_{r,E} 
	\end{equation}
	and
	\begin{equation}\label{eq:interpolationEstimate2}
		\norm[0,E]{\bdiv(\bftau -\I_E \bftau)}\lesssim h_E^r |\bdiv \bftau|_{r,E}.
	\end{equation}
\end{prop}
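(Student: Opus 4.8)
The plan is to split the interpolation error into the two estimates separately, using the commuting diagram property for the divergence part and a classical Bramble–Hilbert argument for the full tensor. Before any estimate, I would reduce to a local analysis on a single element $E$ and, by the standard VEM scaling argument, pass to a reference configuration where the element has diameter of order one; the mesh assumptions~\ref{meshA_1}, \ref{meshA_2}, \ref{meshA_3} guarantee that all constants hidden in $\lesssim$ are uniform in this rescaling.

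First I would prove~\eqref{eq:interpolationEstimate2}, which is the easier of the two. By the commuting diagram property~\eqref{eq:commuting_diagram_property}, restricted to the element $E$, one has $\bdiv(\I_E\bftau) = \P^k_E(\bdiv\bftau)$, where $\P^k_E$ is the local $L^2$-projection onto $\PolyTriple{k}{E}$. Hence
\begin{equation*}
\norm[0,E]{\bdiv(\bftau-\I_E\bftau)} = \norm[0,E]{\bdiv\bftau - \P^k_E(\bdiv\bftau)},
\end{equation*}
and the right-hand side is bounded by $h_E^r|\bdiv\bftau|_{r,E}$ by the standard polynomial approximation estimate for the $L^2$-projection on star-shaped domains (Bramble–Hilbert), valid for $0\le r\le k+1$; here we only use $1\le r\le k+1$ as assumed.

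Next I would prove~\eqref{eq:interpolationEstimate1}. The idea is to insert the $L^2$-projection $\bs\Pi:=\P^k_E\bftau$ (componentwise) as an intermediate term and write $\bftau-\I_E\bftau = (\bftau-\bs\Pi) + (\bs\Pi-\I_E\bftau)$. The first term is controlled by $h_E^r|\bftau|_{r,E}$ exactly as above. For the second term, note $\bs\Pi-\I_E\bftau\in\Sigma_h(E)$, so I would invoke a discrete inverse-type / stability estimate bounding the $L^2$-norm of an element of $\Sigma_h(E)$ by (the scaled norms of) its degrees of freedom — this is the $H(\bdiv)$-analogue of the norm equivalence used in~\cite{ARTIOLI2018978}. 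Applying it to $\bs\Pi-\I_E\bftau$, whose boundary dofs equal $\int_f(\bs\Pi-\bftau)\bbn\cdot\bbp_k$ and whose divergence dofs test $\bdiv(\bs\Pi-\I_E\bftau)=\bdiv\bs\Pi-\P^k_E\bdiv\bftau$ against $\RM_k^\perp(E)$, one estimates each dof: the face terms by a trace inequality applied to $\bftau-\bs\Pi$ (giving $h_E^{r-1/2}|\bftau|_{r,E}$ after scaling, which combines with the $h_E^{1/2}$ from the surface measure and the stabilization-type scaling to yield $h_E^r|\bftau|_{r,E}$), and the divergence terms using $\|\bdiv\bs\Pi-\P^k_E\bdiv\bftau\|_{0,E}\le\|\bdiv(\bftau-\bs\Pi)\|_{0,E}+\|\bdiv\bftau-\P^k_E\bdiv\bftau\|_{0,E}$ together with the approximation bound for $\P^k_E$ on $\bdiv\bftau$.

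The main obstacle I expect is the estimate of the face degrees of freedom of $\bs\Pi-\I_E\bftau$: one must control $\|(\bftau-\bs\Pi)\bbn\|$ on $\partial E$ by interior norms of $\bftau-\bs\Pi$, which requires a scaled trace inequality for $W^r$-functions together with the polynomial approximation estimate for the componentwise $L^2$-projection, and one must track the powers of $h_E$ carefully through the rescaling so that the surface-measure factors and the dof-stability constants combine to the clean power $h_E^r$. Once this bookkeeping is done, collecting the two contributions gives~\eqref{eq:interpolationEstimate1}, and the proposition follows.
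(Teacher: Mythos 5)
Your treatment of \eqref{eq:interpolationEstimate2} is exactly the paper's: the commuting diagram property \eqref{eq:commuting_diagram_property} reduces it to the $L^2$-projection error for $\bdiv\bftau$, and that part is fine. The problem is in your argument for \eqref{eq:interpolationEstimate1}, where there is a genuine gap at the very first step: the componentwise $L^2$-projection $\bs\Pi=\P_E^k\bftau$ does \emph{not} in general belong to $\Sigma_h(E)$. Membership in $\Sigma_h(E)$ (see \eqref{space:localStress}) requires the existence of $\bbw^*\in[H^1(E)]^3$ with $\bs\Pi=\C\teps(\bbw^*)$, i.e.\ the tensor $\D\bs\Pi$ must be a symmetric gradient, which forces the Saint--Venant compatibility conditions; a generic symmetric polynomial tensor violates them. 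Consequently $\bs\Pi-\I_E\bftau$ is not a discrete function, and the ``dof-based norm equivalence for elements of $\Sigma_h(E)$'' that your whole second step rests on cannot be applied to it. Even if you repaired the decomposition, that norm equivalence is itself the crux of the matter and cannot be obtained by the reference-element scaling you invoke at the outset: the functions of $\Sigma_h(E)$ are virtual (known only through boundary tractions and divergence), there is no reference polyhedron, and the constant in such an equivalence is precisely what has to be proved.

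The paper closes this gap differently, and more directly. Since $\bftau=\C\teps(\bbw)$ and $\I_E\bftau=\C\teps(\bbw^*)$ by definition of $\hat\Sigma(E)$ and $\Sigma_h(E)$, the error is itself of the form $\bftau-\I_E\bftau=\C\teps(\bs\xi)$ with $\bs\xi=\bbw-\bbw^*$, and by the defining conditions of the interpolant $\bs\xi$ solves a pure traction problem \eqref{eq:problemPureTraction1} whose data are already projection errors, $\bbg=\P_E^k(\bdiv\bftau)-\bdiv\bftau$ and $\bbh=\bftau\bbn-\P_{\partial E}^k(\bftau\bbn)$; no intermediate polynomial tensor is needed. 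The a priori bound $\norm[0,E]{\C\teps(\bs\xi)}\lesssim h_E\norm[0,E]{\bbg}+h_E^{1/2}\norm[0,\partial E]{\bbh}$ (Lemma~5.1 of~\cite{ARTIOLI2017155}, i.e.\ the scaled stability of the traction problem) is the substitute for your missing norm equivalence, and the conclusion follows by estimating $\bbg$ and $\bbh$ with Bramble--Hilbert and a scaled trace inequality, much as in the last part of your sketch. To make your route work you would have to (i) replace $\P_E^k\bftau$ by an object that actually lies in $\Sigma_h(E)$ and (ii) prove the dof-stability estimate for the virtual space, which amounts to re-deriving exactly this traction-problem bound.
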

\begin{proof}
	Let's start proving~\eqref{eq:interpolationEstimate1}. Since $\bftau\in\hat{\Sigma}(E)\cap \left[ H^r(E)\right]^{3 \times 3}_s$, there exists a $\bbw\in\left[H^1(E) \right]^{3}$ such that $\bftau = \C \teps(\bbw)$. Similarly, for its interpolant  $\I_E\bftau\in\Sigma_h(E)$, there exists another function $\bbw^*\in\left[H^1(E) \right]^{3}$ such that $\I_E\bftau=\C\teps(\bbw^*)$.
	Now, setting $\bs \xi =\left(\bbw-\bbw^*\right)\in \left[H^1(E)\right]^3$ we have that 
	$\bftau -\I_E\bftau= \C\teps(\bs\xi)$.
	Then, using the definition of the interpolation operator, together with~\eqref{eq:div0}, \eqref{eq:div1} and \eqref{eq:div2} we infer that $\bs\xi\in \left[H^1(E)\right]^3$ can be seen as solution of the following pure traction problem:
	\begin{equation}~\label{eq:problemPureTraction1}
		\left\{
		\begin{aligned}
			&-\bdiv\left(\C\teps(\bs\xi)\right) = \bbg \quad &\mbox{in}\ \Omega,\\
			& \left(\C\teps(\bs\xi)\right) \bbn = \bbh \quad &\mbox{on}\ \partial \Omega, 
		\end{aligned}
		\right.
	\end{equation}
	where 
	\begin{equation}~\label{eq:problemPureTraction1_rhs}
		\left\{
		\begin{aligned}
			\bbg &= \bdiv \left(\I_E\bftau\right) -\bdiv \bftau =\P_E^k\left(\bdiv\bftau\right)-\bdiv \bftau,
			\\
			\bbh &= \sum_{f\in\partial E} \left( \bftau \bbn  -\left(\I_E \bftau\right) \bbn \right)\chi_f =\bftau \bbn  -\P_{\partial E}^k\left(\bftau \bbn\right),
		\end{aligned}
		\right.
	\end{equation}
	and $\chi_f$ denotes the characteristic function of the face $f$, whereas 
	$\P_E^k$ and $\P_{\partial E}^k$ indicate the $L^2$-projection operators onto the polynomial functions on $E$ and onto the piecewise polynomial functions on $\partial E$ (with respect to the face $f\in\partial E$).
	Applying~\cite[Lemma 5.1]{ARTIOLI2017155}, we get
	\begin{equation}~\label{eq:estimateLemma5.1}
		\norm[0,E]{\C\left( \teps (\bs\xi)\right)}\lesssim h_E \norm[0,E]{\bbg} + h_E^{1/2} \norm[0,\partial E]{\bbh}.
	\end{equation}
	Now, we need to estimate $\bbg$ end $\bbh$. 
	Therefore, let $r$ be such that $1\leq r \leq k+1$ and using standard approximation estimates, from the first equation of~\eqref{eq:problemPureTraction1_rhs}, we have
	\begin{equation}~\label{eq:estimate_g}
		\norm[0,E]{\bbg} = \norm[0,E]{\P_E^k\bdiv \bftau - \bdiv \bftau}\lesssim \norm[0,E]{\bdiv \bftau} \lesssim h^r\seminorm[r,E]{\bdiv\bftau}.
	\end{equation}
	For the second equation of~\eqref{eq:problemPureTraction1_rhs}, always taking $1\leq r \leq k+1$ and using standard approximation estimates and trace inequality, we get
	\begin{equation}~\label{eq:estimate_h}
		\begin{aligned}
			\norm[0,\partial E]{\bbh} = \norm[0,\partial E]{\bftau\bbn -\P_{\partial E}^k(\bftau \bbn)} &\lesssim \norm[0,\partial E]{\bftau \bbn} \lesssim \norm[0,\partial E]{\bftau} \\
			&\lesssim h_E^{1/2}\seminorm[1/2,\partial E]{\bftau} \lesssim h_E^{1/2}\seminorm[1,E]{\bftau}\\ &\lesssim h_E^{r+1/2}\seminorm[r,E]{\bftau}.
		\end{aligned}
	\end{equation}
	Taking into account~\eqref{eq:estimate_g} and~\eqref{eq:estimate_h}, from~\eqref{eq:estimateLemma5.1} we obtain estimate~\eqref{eq:interpolationEstimate1}. 
	The estimate~\eqref{eq:interpolationEstimate2} immediately follows from~\eqref{eq:estimate_g}:
	\begin{equation}
		\norm[0,E]{\bdiv\left(\bftau- \I_E\bftau\right)}=\norm[0,E]{\bbg} \lesssim h_E^{r}\seminorm[r,E]{\bdiv \bftau},
	\end{equation}
	concluding the proof.
\end{proof}
%%---------------------------
%%
%%---------------------------
\subsection{Error estimates}
Using the same techniques developed in~\cite{ARTIOLI2018978,DLV2021}, one can prove the following result.
\begin{thm}~\label{theorem:convergenceAndSuperconvergence}
	Let $k$ be an integer with $k\geq 1$. Let $\left(\bfsigma,\bbu\right)\in\Sigma\times U$ be the solution of the continuous Problem~\eqref{problem:cont-weak}, and $\left(\bfsigma_h,\bbu_h\right)\in\Sigma_h\times U_h$ be the discrete stress and displacement solution of the discrete Problem~\eqref{problem:discrete}. Under the mesh assumptions $\mathbf{A1}$, $\mathbf{A2}$ and $\mathbf{A3}$ and supposing $\left(\bfsigma,\bbu\right)$ sufficiently regular, the following estimates hold true 
	\begin{equation}~\label{eq:error_estimates}
		\norm[\Sigma]{\bfsigma-\bfsigma_h} +\norm[U]{\bbu -\bbu_h} \lesssim h^{k+1}
	\end{equation}
	and
	\begin{equation}~\label{eq:superconvergence}
		\norm[U]{\P_h^{k}\bbu -\bbu_h} \lesssim h^{k+2}
	\end{equation}
	where we recall that $\P^{k}_h$ is the $L^2$-projection of $\bbu$ onto the piecewise polynomial function of degree up to $k$.
\end{thm}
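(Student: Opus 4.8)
\emph{Setup.}
The plan is to run the classical Brezzi analysis of the mixed scheme, using the ellipticity-on-the-kernel and inf-sup Propositions above together with the interpolation estimates~\eqref{eq:interpolationEstimate1}--\eqref{eq:interpolationEstimate2} and the projection estimate, and then to upgrade the displacement bound to $h^{k+2}$ by a duality argument. I would write $\bs\delta_h:=\bfsigma_h-\I_h\bfsigma\in\Sigma_h$, $\bs e_h:=\P_h^k\bbu-\bbu_h\in U_h$, and introduce the consistency error $\mathcal E(\bftau_h):=a_h(\I_h\bfsigma,\bftau_h)-a(\bfsigma,\bftau_h)$. Subtracting~\eqref{problem:discrete} from~\eqref{problem:cont-weak} tested on $(\bftau_h,\bbv_h)\in\Sigma_h\times U_h\subseteq\Sigma\times U$ and inserting the interpolants, the term $b(\bftau_h,\P_h^k\bbu-\bbu)=\int_\Omega\bdiv\bftau_h\cdot(\P_h^k\bbu-\bbu)$ vanishes because $\bdiv(\Sigma_h)\subseteq U_h$, and $b(\I_h\bfsigma-\bfsigma,\bbv_h)=\int_\Omega(\P_h^k\bdiv\bfsigma-\bdiv\bfsigma)\cdot\bbv_h=0$ by the commuting diagram property~\eqref{eq:commuting_diagram_property}; one is left with
\[
b(\bftau_h,\bs e_h)=a_h(\bs\delta_h,\bftau_h)+\mathcal E(\bftau_h)\quad\forall\bftau_h\in\Sigma_h,\qquad b(\bs\delta_h,\bbv_h)=0\quad\forall\bbv_h\in U_h.
\]
The second relation says $\bs\delta_h\in K_h$; since moreover $\bdiv\bs\delta_h\in U_h$, choosing $\bbv_h=\bdiv\bs\delta_h$ forces $\bdiv\bs\delta_h=\bfzero$, so that $\norm[\Sigma]{\bs\delta_h}=\norm[0]{\bs\delta_h}$. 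Taking $\bftau_h=\bs\delta_h$ in the first relation and using $b(\bs\delta_h,\bs e_h)=\int_\Omega\bdiv\bs\delta_h\cdot\bs e_h=0$ gives $a_h(\bs\delta_h,\bs\delta_h)=-\mathcal E(\bs\delta_h)$.

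\emph{The $h^{k+1}$ estimate.}
Next I would prove the key consistency bound $|\mathcal E(\bftau_h)|\lesssim h^{k+1}\norm[\Sigma]{\bftau_h}$. Inserting $\Pi_E^k\bfsigma\in T_k(E)$ elementwise and using the polynomial consistency $a_E^h(\bs\pi_k,\bftau_h)=a_E(\bs\pi_k,\bftau_h)$ for $\bs\pi_k\in T_k(E)$ (which follows from $\Pi_E^k$ being an $a_E$-projection onto $T_k(E)$ and from $s_E$ annihilating $(I-\Pi_E^k)\bs\pi_k$), one obtains $\mathcal E(\bftau_h)=\sum_E[a_E^h(\I_E\bfsigma-\Pi_E^k\bfsigma,\bftau_h)+a_E(\Pi_E^k\bfsigma-\bfsigma,\bftau_h)]$; then the two local projection/interpolation estimates, $\D\in L^\infty$, the standard norm equivalence on $\Sigma_h(E)$ that makes $s_E$ continuous w.r.t. $\norm[\Sigma]{\cdot}^2$, and Cauchy--Schwarz close the bound. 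Feeding this into $a_h(\bs\delta_h,\bs\delta_h)=-\mathcal E(\bs\delta_h)$ and invoking the ellipticity-on-the-kernel Proposition gives $\alpha_*\norm[\Sigma]{\bs\delta_h}^2\le|\mathcal E(\bs\delta_h)|\lesssim h^{k+1}\norm[\Sigma]{\bs\delta_h}$, hence $\norm[\Sigma]{\bs\delta_h}\lesssim h^{k+1}$; the inf-sup Proposition applied to the first relation gives $\norm[U]{\bs e_h}\lesssim\norm[\Sigma]{\bs\delta_h}+h^{k+1}\lesssim h^{k+1}$; and~\eqref{eq:error_estimates} follows from the triangle inequality together with~\eqref{eq:interpolationEstimate1}--\eqref{eq:interpolationEstimate2} and $\norm[U]{\bbu-\P_h^k\bbu}\lesssim h^{k+1}$.

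\emph{Superconvergence.}
For~\eqref{eq:superconvergence} I would use the dual elasticity problem with load $\bs e_h$: find $(\bs\theta,\bs z)\in\Sigma\times U$ with $\bs\theta=\C\teps(\bs z)$, $-\bdiv\bs\theta=\bs e_h$ in $\Omega$, $\bs z=\bfzero$ on $\partial\Omega_D$, $\bs\theta\bbn=\bfzero$ on $\partial\Omega_N$. Assuming $H^2$-regularity, $\norm[2]{\bs z}+\norm[1]{\bs\theta}\lesssim\norm[0]{\bs e_h}$; moreover $\bdiv\bs\theta=-\bs e_h$ is piecewise polynomial, so $\I_h\bs\theta$ is well defined and~\eqref{eq:interpolationEstimate1} applies with $r=1$. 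Using $\bs e_h\in U_h$, the commuting diagram~\eqref{eq:commuting_diagram_property}, and the first error relation with $\bftau_h=\I_h\bs\theta$,
\[
\norm[0]{\bs e_h}^2=-b(\bs\theta,\bs e_h)=-b(\I_h\bs\theta,\bs e_h)=-a_h(\bs\delta_h,\I_h\bs\theta)-\mathcal E(\I_h\bs\theta),
\]
and it remains to bound each term by $h^{k+2}\norm[0]{\bs e_h}$. For $a_h(\bs\delta_h,\I_h\bs\theta)$ I would insert $\Pi_E^k\bs\theta$: by consistency and the dual equation tested on $\bs\delta_h\in\Sigma$, the polynomial part collapses to $a(\bs\delta_h,\bs\theta)=-b(\bs\delta_h,\bs z)=-\int_\Omega\bdiv\bs\delta_h\cdot\bs z=0$ (here $\bdiv\bs\delta_h=\bfzero$ is crucial), while the remainder is $\lesssim\norm[\Sigma]{\bs\delta_h}\,h\norm[1]{\bs\theta}\lesssim h^{k+2}\norm[0]{\bs e_h}$. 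For $\mathcal E(\I_h\bs\theta)=\sum_E[a_E^h(\I_E\bfsigma-\Pi_E^k\bfsigma,\I_E\bs\theta)+a_E(\Pi_E^k\bfsigma-\bfsigma,\I_E\bs\theta)]$ I would insert $\Pi_E^k\bs\theta$ again: using the $a_E$-orthogonality defining $\Pi_E^k$ (i.e. $a_E(\bfsigma-\Pi_E^k\bfsigma,\bs\pi_k)=0$ for $\bs\pi_k\in T_k(E)$), every resulting term is a product of two factors of orders $O(h^{k+1})$ and $O(h)$, hence $O(h^{k+2})$, except the single piece $\sum_E a_E(\I_E\bfsigma-\bfsigma,\bs\theta)=\sum_E\int_E(\I_E\bfsigma-\bfsigma):\teps(\bs z)$ (using $\D\C=I$). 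Integrating this by parts on each element, the face contributions cancel globally because $\I_h\bfsigma$ and $\bfsigma$ share normal traces across interfaces (both lie in $H(\bdiv;\Omega)$, with zero traction on $\partial\Omega_N$) and $\bs z$ is single-valued and vanishes on $\partial\Omega_D$; the remaining volume part equals $-\int_\Omega(\P_h^k\bdiv\bfsigma-\bdiv\bfsigma)\cdot(\bs z-\P_h^k\bs z)\lesssim h^{k+1}\seminorm[k+1,\Omega]{\bdiv\bfsigma}\,h\norm[1]{\bs z}\lesssim h^{k+2}\norm[0]{\bs e_h}$ by the commuting diagram and $L^2$-orthogonality. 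Collecting, $\norm[0]{\bs e_h}^2\lesssim h^{k+2}\norm[0]{\bs e_h}$, which is~\eqref{eq:superconvergence}.

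\emph{Main obstacle.}
The local-estimate bookkeeping and the norm equivalence on $\Sigma_h(E)$ needed for the continuity of $s_E$ (hence of $a_E^h$) are routine, and parallel~\cite{ARTIOLI2017155,ARTIOLI2018978,DLV2021}. The real difficulty is the superconvergence: a crude estimate of $\mathcal E(\I_h\bs\theta)$ delivers only $O(h^{k+1})$, and extracting the extra power of $h$ forces one to use simultaneously the $H(\bdiv;\Omega)$-conformity of $\I_h\bfsigma$ together with the boundary conditions (to cancel the inter-element face terms after integration by parts), the commuting diagram~\eqref{eq:commuting_diagram_property} with the $L^2$-orthogonality of $\bdiv\bfsigma-\P_h^k\bdiv\bfsigma$ (to absorb the remaining volume term), and the exact divergence-freeness of $\bs\delta_h$ (to annihilate, via the dual equation, the leading contribution of $a_h(\bs\delta_h,\I_h\bs\theta)$).
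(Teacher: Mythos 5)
Your proposal is correct: the paper itself does not write out a proof of Theorem~\ref{theorem:convergenceAndSuperconvergence}, deferring instead to the techniques of~\cite{ARTIOLI2018978,DLV2021}, and your argument (error equations with $\bs\delta_h=\bfsigma_h-\I_h\bfsigma$ divergence-free via the commuting diagram, ellipticity-on-the-kernel plus consistency for the $O(h^{k+1})$ bound, and the Arnold--Brezzi duality argument with global integration by parts and $L^2$-orthogonality for the $O(h^{k+2})$ superconvergence) is precisely the strategy those references employ. The steps you flag as routine (continuity of $s_E$ via norm equivalence on $\Sigma_h(E)$, and the implicit $H^2$-regularity of the dual problem) are indeed the only ingredients left to fill in, and they are standard.
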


	\section{Hybridization technique and post-processing procedure}~\label{section:hybridization_postprocessing}
In this section, we briefly present the main idea of the hybridization procedure and the advantages that this technique leads, as the possibility to reconstruct a better discrete solution for the displacement field, see~\cite{ArnoldBrezzi_1985,DLV2021} for more details.
\subsection{Hybridization technique}
The hybridization technique is a computational procedure used to solve mixed PDE problems in order to obtain some theoretical and practical benefits~\cite{ArnoldBrezzi_1985}.
Essentially, this technique applies whenever the discrete space for the stress field does not have nodal degrees of freedom, namely when the continuity constraints are imposed on the interfaces on the elements, as for our methods. Once this necessary condition is satisfied, the hybridization consists of the following two phases: the first step is characterized by the imposition of the $H(\bdiv)$-conformity through the Lagrange multipliers, while the second one by the application of the static condensation algorithm to obtain a (smaller) symmetric and positive linear system, instead of the original indefinite one. 
%Essentially, this technique consists of two phases: the imposition of the $H(\bdiv)$-conformity throughout the introduction of the Lagrange multipliers and the application of the static condensation algorithm to obtain a (smaller) symmetric and positive linear system, instead of the original indefinite one. 
Therefore, we firstly introduce the following discerte space
\begin{equation}
	\tilde{\Sigma}_h(\Th) := \left\{ \bftau_h \in \LpTensor[2]{\Omega} \ \colon \ {\bftau_h}_{|E} \in\Sigma_h(E), \ \forall E\in\Th \right\}
\end{equation}
which is a subspace of $\Sigma_h = \tilde{\Sigma}_h(\Th) \cap H(\bdiv,\O)$. 
Given $\Fh^I$, the set of the internal faces, we define the space of the Lagrange multipliers, see~\eqref{space:localStress}
\begin{equation}
	\Lambda_h(\Fh^I):=\left\{ \bs\nu_h \in\LTwoTriple{\Fh^I}\,: \, \bs\nu_h\in\PolyTriple{k}{f}, \quad \forall f\in\Fh^I \right\}.
\end{equation}
Now, since the discrete space $\tilde{\Sigma}_h(\Th) $ does not require any kind of continuity between
elements, we force it by introducing the following computable discrete bilinear form
\begin{equation}
	c_h(\bs\bftau_h,\bs\nu_h)=-\sum_{E\in\Th} \int_{\partial E^I}\bs\nu_h \cdot \bs\tau_h \bbn~\df \quad \forall \bftau_h\in\tilde{\Sigma}_h(\Th) , \quad \forall \bs\nu_h \in\Lambda_h(\Fh^I),
\end{equation}
where $\partial E^I = \partial E \cap \Fh^I$ is the set of the internal faces of the element $E$.
Thus, the hybrid version of Problem~\eqref{problem:discrete} reads:
\begin{equation}\label{problem:discrete_hybrid}
	\left\{
	\begin{aligned}
        &\mbox{find } (\bfsigma_h,\bbu_h, \bs\lambda_h)\in\tilde{\Sigma}(\Th)\times U_h \times \Lambda_h(\Fh^I) \mbox{ s.t.}\\
		&a_h(\bfsigma_h,\bftau_h) + b(\bftau_h,\bbu_h) + c_h(\bftau_h,\bs\lambda_h) = 0  \quad &\forall \bftau_h\in\tilde{\Sigma}(\Th),\\
		&b(\bfsigma_h, \bbv_h) = -(\bbf,\bbv_h)  \quad &\forall \bbu_h\in U_h,\\
		&c_h(\bfsigma_h,\bs\nu_h) =0  \quad &\forall \bs\nu_h\in\Lambda_h(\Fh^I).
	\end{aligned}
\right.
\end{equation}
We observe that the two discrete Problems~\eqref{problem:discrete} and~\eqref{problem:discrete_hybrid} are equivalent. Indeed if $(\bfsigma_h,\bbu_h,\bflambda_h)\in\tilde{\Sigma}_{h}(\Th)\times U_h\times \Lambda_h(\Fh^I)$ solves Problem \eqref{problem:discrete_hybrid}, then  
$(\bfsigma_h,\bbu_h)\in{\Sigma}_{h}\times U_h$ and is the solution of Problem \eqref{problem:discrete}. Moreover, if $(\bfsigma_h,\bbu_h)\in{\Sigma}_{h}\times U_h$ is the solution of Problem \eqref{problem:discrete}, then there is a unique $\bflambda_h\in\Lambda_h(\Fh^I)$ such that $(\bfsigma_h,\bbu_h,\bflambda_h)$ is the solution of Problem \eqref{problem:discrete_hybrid}.
Moreover, since $U_h$ and $\tilde{\Sigma}_{h}(\Th)$ are now discontinuous, it is possible to apply the second step of the hybridization, the static condensation, which reduces the computational cost.
%%
%% subsection: post-processing procedure
%%
\subsection{Post-processing procedure}\label{subsection:POSTPROCESSINGHRFAMILY}
We present the post processing procedure to achieve a new discrete solution for the displacement field with an enhanced accuracy. More precisely, we exploit the information derived by the discrete solution of Problem~\eqref{problem:discrete_hybrid}, as the Lagrange multipliers $\bs\lambda_h$ and the discrete solution $\bbu_h$, to construct a non-conforming VEM approximation $\bbu_h^{*}$, see~\cite{AyusoLipnikovManzini} for more details about non-conforming VEM.
Let $\left[H^1(\Th)\right]^3$ be the broken $H^1$ vector space on $\Th$ defined as
\begin{equation}
	\left[H^1(\Th)\right]^3:= \prod_{E\in\Th} \left[H^1(E)\right]^3=\left\{\bbv \in \left[L^2(\O)\right]^3 : \bbv_{|E}\in \left[H^1(E)\right]^3 \right\}
\end{equation}
and endowed with the corresponding broken seminorm and norm
\begin{equation}
	|\bbv|^2_{1,\Th}:=\sum_{E \in \Th}\norm[0,E]{\nabla\bbv}^2, \qquad \norm[1,\Th]{\bbv}^2:=\sum_{E \in \Th}\norm[1,E]{\bbv}^2.
\end{equation}
Now, fixed an integer $l\geq 0$, we indicate the global non-conforming Sobolev space associated with a polyhedral decomposition $\Th$ 
\begin{equation}
	H^{1,nc}_l(\Th):=\left\{\bbv \in \left[H^1(\Th)\right]^3 \,: \, \int_e\ljump \bbv \rjump \bbq~\df =0 \quad  \forall \bbq\in\PolyTriple{l}{f}, \ \forall f \in\Fh  \right\}
\end{equation}
where 
\begin{equation}
	\ljump \bbv \rjump:=\left\{
	\begin{aligned}
		&\bbv^{+}\otimes\bbn_{E^+}+\bbv^{-}\otimes\bbn_{E^-} \quad &\text{ on } f\in \Fh^I\\
		&\bbv\otimes\bbn_{f}  & \text{ on }f\in \Fh^B,
	\end{aligned}
	\right.
\end{equation}
where $\otimes$ denotes the usual tensor product of vectors. Moreover, for each internal face $f\in\Fh^I$, we denote by $E^\pm$ the two elements that share the face $f$, and we write $\bbn_{E^{+}}$, $\bbn_{E^{-}}$ for the exterior normal of $f$ on $\partial E^{+}$ and $\partial E^{-}$, respectively.
%We remark that the seminorm $|\cdot|_{1,\Th}$ is a norm for functions in $H^{1,nc}_0(\Th)$ and that the following Poincar\'e inequality holds true (see Refs.~\cite{AyusoLipnikovManzini}):
%\begin{equation}
%||\bbv||_{0}\lesssim |\bbv|_{1,\Th} \quad \forall \bbv \in H^{1,nc}_0(\Th).
%\end{equation}
\subsubsection{Non-conforming Virtual Element Methods}
Given a polyhedron $E\in\Th$, for an integer $l\geq 2$ 
%where we recall that $k \geq 1$ is the degree of accuracy of the proposed VE methods in Section~\ref{section:VEMHRFAMILY},
we define the local non-conforming virtual space as
\begin{equation}~\label{eq:nonConformingSpace}
	\begin{aligned}
		U_h^*(E):=
		\left\{ 
		\vhstar\in \left[H^1(E)\right]^{3} \, :  \, \right. &\left.\Delta \vhstar\in\PolyTriple{l-2}{E}, \right.
		\\  
		& \left.
		\nabla\vhstar\bbn  \in \PolyTriple{l-1}{f}\quad \forall f\in\partial E
		\right\}.
	\end{aligned}
\end{equation}
\begin{remark}
We observe that the non-confoming space definition also holds for $l=1$, but for the aim of this section, we will always take $l=k+1\geq2$.
\end{remark}
Accordingly, for the local spaces $U_h^*(E)$, we can take the following degrees of freedom:
\begin{itemize}
	\item all the moments of $\vhstar$ of order up to $l-1$ on each face $f\in\partial E$:
	\begin{equation}~\label{eq:dofsNonConformingVEM_boundary}
		\vhstar \rightarrow \dfrac{1}{|f|} \int_f \vhstar \cdot \bbq_{l-1}~\df \quad \forall \bbq_{l-1} \in\left[\mathbb{P}_{l-1}(f)\right]^3;
	\end{equation}
	\item all the moments of $\vhstar$ of order up to $l-2$ on element $E$:
	\begin{equation}~\label{eq:dofsNonConformingVEM_internal}
		\vhstar \rightarrow \dfrac{1}{|E|} \int_E \bbv_h^* \cdot \bbq_{l-2}~\dEl \quad \forall \bbq_{l-2} \in\left[\mathbb{P}_{l-2}(E)\right]^3.
	\end{equation}
\end{itemize}
Therefore, we infer that the dimension of the space is 
\begin{equation}
	\dim(U_h^*(E))=3n_f^E\pi_{l-1,f} + 3\pi_{l-2,E}.
\end{equation}
The unisolvence of the degrees of freedom defined in is given by the following proposition, whose proof can be found in~\cite{AyusoLipnikovManzini}.
\begin{prop}~\label{prop:h_unisolvenceDofsNC}
	Let $E$ be a simple polyhedron with $n_f^E$ faces, and let $U^*_h(E)$ be the space defined in~\eqref{eq:nonConformingSpace}. The degrees of freedom~\eqref{eq:dofsNonConformingVEM_boundary} and~\eqref{eq:dofsNonConformingVEM_internal} are unisolvent for $U^*_h(E)$.
\end{prop}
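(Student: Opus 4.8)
The plan is to establish unisolvence by the standard dimension-counting argument: since the number of degrees of freedom in~\eqref{eq:dofsNonConformingVEM_boundary} and~\eqref{eq:dofsNonConformingVEM_internal} equals $\dim(U_h^*(E)) = 3 n_f^E \pi_{l-1,f} + 3 \pi_{l-2,E}$, it suffices to show that if $\vhstar \in U_h^*(E)$ has all these functionals vanishing, then $\vhstar \equiv \bfzero$. Since the components decouple, I would actually work component-wise and treat a scalar $v \in H^1(E)$ with $\Delta v \in \mathbb{P}_{l-2}(E)$, $\nabla v \cdot \bbn \in \mathbb{P}_{l-1}(f)$ on each face, and all the boundary moments up to order $l-1$ and the interior moments up to order $l-2$ equal to zero; the goal is to conclude $v = 0$.

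First I would integrate by parts on $E$ using $v$ itself as a test function:
\begin{equation*}
\int_E |\nabla v|^2 ~\dEl = -\int_E (\Delta v)\, v ~\dEl + \sum_{f \in \partial E} \int_f (\nabla v \cdot \bbn)\, v ~\df.
\end{equation*}
The first term on the right vanishes because $\Delta v \in \mathbb{P}_{l-2}(E)$ and all interior moments of $v$ of order up to $l-2$ are zero (here one uses $l \geq 2$ so that the moment degrees of freedom indeed span $\mathbb{P}_{l-2}(E)$). For the boundary term, on each face $f$ one has $\nabla v \cdot \bbn \in \mathbb{P}_{l-1}(f)$, and the face moments of $v$ of order up to $l-1$ all vanish, so each $\int_f (\nabla v \cdot \bbn)\, v ~\df = 0$. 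Hence $\int_E |\nabla v|^2 ~\dEl = 0$, which forces $v$ to be constant on $E$ (using that $E$ is connected, being a contractible polyhedron). Finally, the interior moment degrees of freedom of order $0$ — present since $l \geq 2$ — give $\int_E v ~\dEl = 0$, so the constant is zero and $v \equiv 0$. Reassembling the three components gives $\vhstar = \bfzero$, and the counting argument closes the proof.

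The only genuinely delicate point is making sure the functionals~\eqref{eq:dofsNonConformingVEM_boundary}--\eqref{eq:dofsNonConformingVEM_internal} are well defined on $U_h^*(E)$ and that their total count matches the dimension: the well-definedness of the face moments requires a trace argument (elements of $H^1(E)$ have traces in $L^2(\partial E)$, which is enough to pair against polynomials), and the dimension identity is exactly the one recorded just above the proposition, so no separate computation is needed. Since this is precisely the argument of~\cite{AyusoLipnikovManzini}, I would simply cite that reference for the details and present the integration-by-parts sketch above as the essential idea; the main obstacle, such as it is, is purely bookkeeping — keeping track of which polynomial degree ($l-1$ versus $l-2$) is paired against which trace so that every term in the Green's identity is annihilated by the vanishing degrees of freedom.
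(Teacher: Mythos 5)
Your argument is correct and is precisely the standard unisolvence proof from the cited reference~\cite{AyusoLipnikovManzini}: the paper itself gives no proof of this proposition and simply defers to that reference, so you are reproducing the intended argument (component-wise Green's identity, annihilation of $\Delta v$ by the interior moments up to order $l-2$ and of $\nabla v\cdot\bbn$ by the face moments up to order $l-1$, then killing the constant with the zeroth interior moment). The only point you gloss over is that the identity $\dim\left(U_h^*(E)\right)=3n_f^E\pi_{l-1,f}+3\pi_{l-2,E}$ is not pure bookkeeping but itself rests on the solvability, up to constants, of the local Neumann problem with polynomial data $(\rho,g_f)$ subject to the compatibility condition $\int_E\rho~\dEl+\int_{\partial E}g~\df=0$; since you defer to the same reference for these details, this is acceptable.
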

We define the projection operator $\Pi^{\nabla}_{E} : U_h^*(E)\rightarrow \PolyTriple{l}{E}$ such that $\forall \vhstar \in U_h^*(E)$ we have 
\begin{equation}\label{eq:projectionPostProcessing1}
	\begin{aligned}
		\int_E \nabla \Pi^{\nabla}_E\vhstar \colon \nabla\bbq~\dEl = \int_E \nabla \vhstar \colon \nabla\bbq~\dEl, \quad \forall \bbq\in\PolyTriple{l}{E},
	\end{aligned}
\end{equation}
together with the condition
\begin{equation}\label{eq:projectionPostProcessing2}
		\int_E \Pi^{\nabla}_E\vhstar~\dEl = \int_E \vhstar~\dEl.
\end{equation}
Note that $\Pi_E^{\nabla}\vhstar$ is computable for any $\vhstar\in U_h^*(E)$ from the degrees of freedom~\eqref{eq:dofsNonConformingVEM_boundary} and~\eqref{eq:dofsNonConformingVEM_internal} since 
\begin{equation}
	\begin{aligned}
		\int_E \nabla \Pi^{\nabla}_E\vhstar \colon \nabla\bbq~\dEl = 
		-\int_E \vhstar\cdot \Delta \bbq~\dEl + \int_{\partial E} \vhstar \cdot \nabla \bbq\,\bbn ~\df.
	\end{aligned}
\end{equation}
The global non-conforming virtual element space is given by the standard gluing of the local approximation spaces, see~\eqref{eq:nonConformingSpace}
\begin{equation}
	U_h^*(\Th):=\left\{\bbv^*_h\in H^{1,nc}_l(\Th) \, :\, \bbv^*_{h_{|E}}\in U_h^*(E) \quad \forall\ E\in\Th  \right\}.
\end{equation}
The following results will be useful to prove the Theorem~\ref{theorem:postprocessing}.

	%%
%% proposition for estimates
%%
\begin{prop}\label{prop:norm_and_seminorm_vhstar_estimate}
    Under assumption~\ref{meshA_1}, \ref{meshA_2} and \ref{meshA_3}, for every element $E\in\Th$ and every $\bbv_h^*\in U_h^*(E)$, it holds
    \begin{equation}~\label{eq:seminorm_vhstar}
        \seminorm[1,E]{\vhstar} \lesssim h_E^{-1} \norm[0,E]{\vhstar}
    \end{equation}
    and
    \begin{equation}~\label{eq:norm_vhstar}
        \norm[0,E]{\vhstar}\lesssim h_E^{1/2} \norm[0,\partial E]{\P^{l-1}_{\partial E}\vhstar}+ \norm[0,E]{\P^{l-2}_E\vhstar}.
    \end{equation}
\end{prop}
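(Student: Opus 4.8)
\medskip
\noindent\textbf{Proof plan.} Both estimates are local, so I would fix $E\in\Th$, set $l=k+1\ge2$, and write $\widehat v$ for the pull-back of $\vhstar$ to the reference element $\widehat E:=(E-\bbx_E)/h_E$ when needed. The engine of the argument is the integration-by-parts identity
\begin{equation*}
\seminorm[1,E]{\vhstar}^{2}=-\int_E\vhstar\cdot\Delta\vhstar~\dEl+\int_{\partial E}\vhstar\cdot(\nabla\vhstar\,\bbn)~\df ,
\end{equation*}
which is legitimate because, by \eqref{eq:nonConformingSpace}, $\Delta\vhstar\in\PolyTriple{l-2}{E}$ and $(\nabla\vhstar\,\bbn)_{|f}\in\PolyTriple{l-1}{f}$ for every face $f$, so the boundary term is a genuine $L^{2}(\partial E)$ pairing. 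Since these two data are polynomials, I would replace $\vhstar$ by its $L^{2}$-projections in the two integrals, i.e.\ $\int_E\vhstar\cdot\Delta\vhstar=\int_E(\P^{l-2}_E\vhstar)\cdot\Delta\vhstar$ and $\int_{\partial E}\vhstar\cdot(\nabla\vhstar\,\bbn)=\int_{\partial E}(\P^{l-1}_{\partial E}\vhstar)\cdot(\nabla\vhstar\,\bbn)$, apply Cauchy--Schwarz, and use the two inverse-type bounds
\begin{equation*}
\norm[0,E]{\Delta\vhstar}\lesssim h_E^{-1}\seminorm[1,E]{\vhstar},\qquad \norm[0,\partial E]{\nabla\vhstar\,\bbn}\lesssim h_E^{-1/2}\seminorm[1,E]{\vhstar}
\end{equation*}
to cancel one power of $\seminorm[1,E]{\vhstar}$, obtaining
\begin{equation*}
\seminorm[1,E]{\vhstar}\lesssim h_E^{-1}\norm[0,E]{\P^{l-2}_E\vhstar}+h_E^{-1/2}\norm[0,\partial E]{\P^{l-1}_{\partial E}\vhstar}.
\end{equation*}
Then \eqref{eq:norm_vhstar} follows by splitting $\vhstar=\P^0_E\vhstar+(\vhstar-\P^0_E\vhstar)$, using $\norm[0,E]{\P^0_E\vhstar}\le\norm[0,E]{\P^{l-2}_E\vhstar}$ (this is where $l\ge2$ enters) and the Poincaré inequality $\norm[0,E]{\vhstar-\P^0_E\vhstar}\lesssim h_E\seminorm[1,E]{\vhstar}$ on the star-shaped $E$, and inserting the previous display. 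Estimate \eqref{eq:seminorm_vhstar} follows by feeding into the same display the discrete trace inequality $\norm[0,\partial E]{\vhstar}\lesssim h_E^{-1/2}\norm[0,E]{\vhstar}$ valid on $U_h^*(E)$, which bounds $\norm[0,\partial E]{\P^{l-1}_{\partial E}\vhstar}\le\norm[0,\partial E]{\vhstar}$.

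\medskip
\noindent Everything above except the two inverse-type bounds and the discrete trace inequality is routine (integration by parts, Cauchy--Schwarz, Poincaré, and polynomial inverse/trace inequalities for the genuine polynomials $\Delta\vhstar$, $\nabla\vhstar\,\bbn$, valid on star-shaped sets with constants depending only on $\gamma$ and $l$). To establish those three bounds I would rescale to $\widehat E$: under \ref{meshA_1}--\ref{meshA_3} each face of $E$ contains a disk of radius $\gtrsim\gamma^{2}h_E$, so a packing argument bounds $n_f^E$ and the number of edges per face by a constant depending only on $\gamma$, whence $U_h^*(\widehat E)$ has dimension bounded uniformly in the mesh. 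On this fixed finite-dimensional space the maps $\widehat v\mapsto\norm[0,\widehat E]{\Delta\widehat v}$ and $\widehat v\mapsto\norm[0,\partial\widehat E]{\nabla\widehat v\,\bbn}$ vanish on constants and are therefore dominated by the seminorm $\widehat v\mapsto\seminorm[1,\widehat E]{\widehat v}$, while $\widehat v\mapsto\norm[0,\partial\widehat E]{\widehat v}$ is a bounded linear map and hence dominated by $\widehat v\mapsto\norm[0,\widehat E]{\widehat v}$; the equivalence constants are rendered uniform over the admissible reference elements by a standard compactness argument, and scaling back reinstates the powers of $h_E$. The same rescaling, this time using that $\widehat v\mapsto\norm[0,\partial\widehat E]{\P^{l-1}_{\partial\widehat E}\widehat v}+\norm[0,\widehat E]{\P^{l-2}_{\widehat E}\widehat v}$ is a norm on $U_h^*(\widehat E)$ thanks to the unisolvence of the degrees of freedom (Proposition~\ref{prop:h_unisolvenceDofsNC}), gives \eqref{eq:norm_vhstar} directly; alternatively, all these estimates may be quoted from the non-conforming virtual element literature, e.g.~\cite{AyusoLipnikovManzini}.

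\medskip
\noindent The main obstacle, as expected, is precisely this last point: although $\Delta\vhstar$ and $\nabla\vhstar\,\bbn$ are polynomials, $\vhstar$ itself is genuinely virtual, so one cannot apply a classical polynomial inverse inequality directly to $\vhstar$ and must instead exploit that $\vhstar$ is, componentwise, the solution of a Neumann--Poisson problem with polynomial data of degree at most $l-1$ on an element that is uniformly star-shaped and has a bounded number of faces. This is where assumptions \ref{meshA_1}--\ref{meshA_3} are genuinely used; once the three auxiliary bounds are secured, the remaining manipulations are the routine ones listed above.
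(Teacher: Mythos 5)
Your overall architecture coincides with the paper's: the integration-by-parts identity, Cauchy--Schwarz, the two inverse-type bounds $\norm[0,E]{\Delta\vhstar}\lesssim h_E^{-1}\seminorm[1,E]{\vhstar}$ and $\norm[0,\partial E]{\nabla\vhstar\,\bbn}\lesssim h_E^{-1/2}\seminorm[1,E]{\vhstar}$ (which the paper likewise does not prove from scratch but imports from~\cite{BLRXX} and~\cite{ARTIOLI2017155}), and, for~\eqref{eq:norm_vhstar}, insertion of the $L^2$-projections followed by a mean-value splitting and Poincar\'e. Your derivation of~\eqref{eq:norm_vhstar} is correct and in fact slightly more direct than the paper's: by applying the projected identity to $\vhstar$ itself rather than to $\whstar=\vhstar-\vhstarbar$, you land on $\norm[0,\partial E]{\P^{l-1}_{\partial E}\vhstar}$ and $\norm[0,E]{\P^{l-2}_E\vhstar}$ immediately and avoid the paper's final conversion step from projections of $\whstar$ back to projections of $\vhstar$.

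The gap is in your derivation of~\eqref{eq:seminorm_vhstar}. You close the argument with the discrete trace inequality $\norm[0,\partial E]{\vhstar}\lesssim h_E^{-1/2}\norm[0,E]{\vhstar}$ for the \emph{virtual} function itself. This is not one of the standard polynomial inverse estimates (only $\Delta\vhstar$ and $\nabla\vhstar\,\bbn$ are polynomials; $\vhstar$ is not), and its natural proof --- the continuous trace inequality combined with $\seminorm[1,E]{\vhstar}\lesssim h_E^{-1}\norm[0,E]{\vhstar}$ --- is precisely~\eqref{eq:seminorm_vhstar}, so invoking it here is circular. The scaling-plus-compactness justification you sketch is the known weak point of such arguments on polytopal meshes: the family of admissible rescaled elements is not compact in a topology in which the dimension of $U_h^*(\widehat E)$ and the norm-equivalence constants behave continuously (faces and edges can degenerate while remaining admissible), which is why the rigorous VEM literature establishes such inequalities by direct PDE arguments rather than by compactness. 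Note also that the additive trace inequality $\norm[0,\partial E]{\vhstar}^2\lesssim h_E^{-1}\norm[0,E]{\vhstar}^2+h_E\seminorm[1,E]{\vhstar}^2$ is not enough to bootstrap: substituting your key display into it leaves a term $\norm[0,\partial E]{\vhstar}^2$ on the right with a constant that cannot be absorbed. The paper's fix is exactly at this point: it uses the multiplicative trace inequality $\norm[0,\partial E]{\vhstar}\lesssim\norm[0,E]{\vhstar}^{1/2}\bigl(\seminorm[1,E]{\vhstar}^2+h_E^{-2}\norm[0,E]{\vhstar}^2\bigr)^{1/4}$ from Brenner--Scott, which after substitution produces only the half power $\seminorm[1,E]{\vhstar}^{1/2}$, subsequently absorbed by Young's inequality. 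Replacing your discrete trace inequality by this multiplicative one (plus Young) repairs the proof of~\eqref{eq:seminorm_vhstar} with no other changes to your argument.
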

\begin{proof}
Let's start to prove~\eqref{eq:seminorm_vhstar}. Since $\vhstar\in\Uhstar$, we have that
\begin{equation}\label{eq:seminorm_vhstar1}
\begin{aligned}
\seminorm[1,E]{\vhstar}^2
%&=\int_E \nabla \vhstar \colon \nabla \vhstar~\dEl \\
&=-\int_E \Delta \vhstar \cdot\vhstar~\dEl + \int_{\partial E} \nabla \vhstar\bbn \cdot \vhstar~\df \\
&\leq \norm[0,E]{\Delta\vhstar}\norm[0,E]{\vhstar}+\norm[0,\partial E]{\nabla \vhstar\bbn}\norm[0,\partial E]{\vhstar}.
\end{aligned}
\end{equation}
By definition of $\Uhstar$ (cf.~\eqref{eq:nonConformingSpace}), we have that $\Delta\vhstar$ and  $(\nabla \vhstar\bbn)_{|\partial E}$ are two piecewise polynomial (vectorial) functions. So under the usual mesh assumptions~\ref{meshA_1}, \ref{meshA_2}, and~\ref{meshA_3}, the following inverse estimates
\begin{equation}\label{eq:inverseEstimateDelta}
    \norm[0,E]{\Delta\vhstar} \lesssim h_E^{-1} \seminorm[1,E]{\vhstar}
\end{equation}
and
\begin{equation}\label{eq:inverseEstimateNabla1}
    \norm[0,\partial E]{\nabla \vhstar\bbn} \lesssim h_E^{-1/2} \norm[-1/2,\partial E]{\nabla \vhstar\bbn}
\end{equation}
hold true, see~\cite[Lemma 6.3]{BLRXX} for the first estimate, whereas see~\cite{ARTIOLI2017155} for the second one. Therefore, using the technique developed in~\cite{BLRXX} and~\eqref{eq:inverseEstimateDelta}, we get
\begin{equation}\label{eq:inverseEstimateNabla2}
    \begin{aligned}
        \norm[0,\partial E]{\nabla \vhstar\bbn} 
        &\lesssim h_E^{-1/2} \norm[-1/2,\partial E]{\nabla \vhstar\bbn}\\
        &\lesssim h_E^{-1/2} \left(\norm[0,E]{\nabla \vhstar} + h_E\norm[0,E]{\Delta\vhstar}\right)\\
        %&\lesssim h_E^{-1/2} \left(\seminorm[1,E]{ \vhstar} + \seminorm[1,E]{\vhstar}\right) \\
        %&\lesssim h_E^{-1/2} \norm[0,E]{\nabla \vhstar} = 
        &\lesssim h_E^{-1/2} \seminorm[1,E]{\vhstar}
    \end{aligned}
\end{equation}
Hence, combining~\eqref{eq:inverseEstimateDelta} and~\eqref{eq:inverseEstimateNabla2} into~\eqref{eq:seminorm_vhstar1}, we get
\begin{equation}
\begin{aligned}
\seminorm[1,E]{\vhstar}
&\lesssim h_E^{-1}\norm[0,E]{\vhstar} + h_E^{-1/2}\norm[0,\partial E]{\vhstar}.
\end{aligned}
\end{equation}
Exploiting the following trace inequality (see~\cite[Theorem 1.6.6]{Brenner-Scott:2008}) 
\begin{equation}
    \norm[0,\partial E]{\vhstar} \lesssim \norm[0,E]{\vhstar}^{1/2}\left[ \left(\seminorm[1,E]{\vhstar}^2 + h_E^{-2}\norm[0,E]{\vhstar}^2 \right)^{1/2}\right]^{1/2}
\end{equation}
we get 
\begin{equation}
\begin{aligned}
\seminorm[1,E]{\vhstar} &\lesssim h_E^{-1} \norm[0,E]{\vhstar}+h_E^{-1/2} \norm[0,E]{\vhstar}^{1/2} \left(\seminorm[1,E]{\vhstar}^2 + h_E^{-2}\norm[0,E]{\vhstar}^2 \right)^{1/4}\\
 &\lesssim h_E^{-1} \norm[0,E]{\vhstar}+
    h_E^{-1/2} \norm[0,E]{\vhstar}^{1/2}\seminorm[1,E]{\vhstar}^{1/2} + h_E^{-1} \norm[0,E]{\vhstar}\\
        &\lesssim h_E^{-1} \norm[0,E]{\vhstar}+
    h_E^{-1/2} \norm[0,E]{\vhstar}^{1/2}\seminorm[1,E]{\vhstar}^{1/2}.
\end{aligned}
\end{equation}
The Young's inequality with $\varepsilon$ ($\varepsilon>0$) applied to the second term of the previous inequality, gives
\begin{equation}
\seminorm[1,E]{\vhstar} \lesssim h_E^{-1} \norm[0,E]{\vhstar}+\frac{1}{2\varepsilon} h_E^{-1} \norm[0,E]{\vhstar}+\frac{\varepsilon}{2}\seminorm[1,E]{\vhstar},
\end{equation}
and choosing $\varepsilon$ sufficiently small, i.e., $\varepsilon=1/2$, we get~\eqref{eq:seminorm_vhstar}.
Now, we have to prove~\eqref{eq:norm_vhstar}. First of all, we split $\vhstar\in\Uhstar$ as 
\begin{equation}
    \vhstar = \left(\vhstar - \vhstarbar \right) + \vhstarbar = \whstar - \vhstarbar,
\end{equation}
where $\vhstarbar$ is the mean value of $\vhstar$ on E
\begin{equation}
%\vhstarbar=\frac{1}{|\partial E|}\int_{\partial E}\vhstar~\df    
\vhstarbar=\frac{1}{|E|}\int_E\vhstar~\dEl
\end{equation}
and $\whstar:=\vhstar -\vhstarbar$. Then, a direct computation shows that 
\begin{equation}\label{eq:triangular_inequality_vhstar}
    \norm[0,E]{\vhstar} \leq \norm[0,E]{\whstar} +\norm[0,E]{\vhstarbar} %\lesssim \norm[0,E]{\whstar} + h_E^{1/2}\norm[0,\partial E]{\vhstarbar}.
\end{equation}
Since $\whstar$ has zero mean value on $E$, using Poincar\'e estimate, see i.e. ~\cite{brenner2003,brenner2004}, we have
\begin{equation}\label{eq:poincare_inequality_vhstar}
    \norm[0,E]{\whstar} \lesssim h_E\seminorm[1,E]{\whstar}.
\end{equation}
As before, since $\left(\nabla \whstar \bbn\right)_{|_{\partial E}}$ and $\Delta \whstar$ are piecewise polynomial functions, we can use the estimates~\eqref{eq:inverseEstimateDelta} and~\eqref{eq:inverseEstimateNabla2} to obtain 
\begin{equation}
\begin{aligned}
\seminorm[1,E]{\whstar}^2 
%&= \int_E \nabla \whstar \colon \nabla \whstar~\dEl \\
&= -\int_E \Delta \whstar \cdot \whstar~\dEl + \int_{\partial E} \nabla\whstar\bbn \cdot \whstar~\df\\
&= -\int_E \Delta \whstar \cdot \P^{l-2}_E\whstar~\dEl + \int_{\partial E} \nabla\whstar\bbn \cdot \P^{l-1}_{\partial E}\whstar~\df\\
&\leq \norm[0,E]{\Delta \whstar}\norm[0,E]{\P^{l-2}_E\whstar} + \norm[0,\partial E]{\nabla\whstar\bbn} \norm[0,\partial E]{\P^{l-1}_{\partial E}\whstar}\\
&\lesssim h_E^{-1} \seminorm[1,E]{\whstar}\norm[0,E]{\P^{l-2}_E \whstar}+h_E^{-1/2}\seminorm[1,E]{\whstar}\norm[0,\partial E]{\P^{l-1}_{\partial E}\whstar}  
\end{aligned}
\end{equation}
Therefore, we get
\begin{equation}\label{eq:seminorm_whstar1}
\seminorm[1,E]{\whstar} \lesssim h_E^{-1} \norm[0,E]{\P^{l-2}_E\ \whstar}+h_E^{-1/2}\norm[0,\partial E]{\P^{l-1}_{\partial E}\whstar}.
\end{equation}
Combining~\eqref{eq:triangular_inequality_vhstar}, \eqref{eq:poincare_inequality_vhstar} and \eqref{eq:seminorm_whstar1}, we infer 
\begin{equation}~\label{eq:norm_vhstar1}
    \norm[0,E]{\vhstar} \lesssim h_E^{1/2} \norm[0,\partial E]{\P^{l-1}_{\partial E}\whstar} + \norm[0,E]{\P^{l-2}_E\whstar}+ \norm[0, E]{\vhstarbar}.
\end{equation}
The triangle inequality together the trace inequality give the following estimate of the first term of~\eqref{eq:norm_vhstar1}
\begin{equation}
\begin{aligned}
\norm[0,\partial E]{\P^{l-1}_{\partial E}\whstar}
&\leq \norm[0,\partial E]{\P^{l-1}_{\partial E}(\vhstar- \vhstarbar)} \\
&\leq \norm[0,\partial E]{\P^{l-1}_{\partial E}\vhstar} + \norm[0,\partial E]{\P^{l-1}_{\partial E}\vhstarbar}\\
&=  \norm[0,\partial E]{\P^{l-1}_{\partial E}\vhstar} + \norm[0,\partial E]{\vhstarbar}\\
&  \leq \norm[0,\partial E]{\P^{l-1}_{\partial E}\vhstar} + h^{-1/2}\norm[0, E]{\vhstarbar},
\end{aligned}
\end{equation}
and so~\eqref{eq:norm_vhstar1} becomes
\begin{equation}~\label{eq:norm_vhstar2}
    \norm[0,E]{\vhstar} \lesssim h_E^{1/2} \norm[0,\partial E]{\P^{l-1}_{\partial E}\vhstar} + \norm[0,E]{\P^{l-2}_E\whstar}+ \norm[0, E]{\vhstarbar}.
\end{equation}
Noticing that 
\begin{equation*}
    \int_{E} \P^{l-2}_E\whstar \cdot \vhstarbar~\dEl=0
\end{equation*}
so it holds
\begin{equation}~\label{eq:seminorm_whstar4}
    \norm[0,E]{\P^{l-2}_E\whstar} + \norm[0,E]{\vhstarbar} \lesssim \norm[0 E]{\P^{l-2}_E(\whstar+ \vhstarbar)} = \norm[0,E]{\P^{l-2}_E\vhstar}.
\end{equation}
Now estimate~\eqref{eq:norm_vhstar} follows from~\eqref{eq:norm_vhstar2} and~\eqref{eq:seminorm_whstar4}.
\end{proof}
%%
%% due parole sul teorema
%%
Now we introduce the main result of this section.
%%
%% main theorem
%%
\begin{thm}~\label{theorem:postprocessing}
Fixed an integer $k\geq 1$, which we remind to be the degree of accuracy of our VEM schemes presented in Section~\ref{section:VEMHRFAMILY}, let $\left(\bfsigma, \bbu\right)$ be the solution of the continuous Problem~\eqref{problem:cont-strong} and $\left(\bfsigma_h, \bbu_h, \bs\lambda_h\right)$ be the discrete solution of hybridized Problem~\eqref{problem:discrete_hybrid}. Let be $l=k+1$ and we define $\uhstar\in\Uhstarglob$ such that it holds
    \begin{equation}\label{eq:uhstar_condition_postprocessing}
    \left\{
        \begin{aligned}
          & \P^{k}_{\partial E}\left(\uhstar -\bs\lambda_h \right)=\bfzero,\\
          & \P_E^{k-1}\left(\uhstar - \bbu_h \right)=\bfzero.
        \end{aligned}
    \right.
    \end{equation}
Then we have the following estimate 
    \begin{equation}~\label{eq:convergence_estimate_postprocessing}
        \norm[0]{\bbu-\uhstar}\lesssim h^{k+2}.
    \end{equation}
\end{thm}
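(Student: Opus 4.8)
The plan is to adapt the hybridization post-processing argument of~\cite{ArnoldBrezzi_1985,DLV2021}: interpolate $\bbu$ into the non-conforming space, estimate the distance from this interpolant to $\uhstar$ by means of Proposition~\ref{prop:norm_and_seminorm_vhstar_estimate}, and close the bound using the displacement superconvergence~\eqref{eq:superconvergence} together with a superconvergence estimate for the multiplier $\bs\lambda_h$. First I would introduce $\bbu_I^{*}\in\Uhstarglob$, the interpolant of $\bbu$ defined through the degrees of freedom~\eqref{eq:dofsNonConformingVEM_boundary}--\eqref{eq:dofsNonConformingVEM_internal} (well posed by Proposition~\ref{prop:h_unisolvenceDofsNC}). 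Since $l=k+1$, we have $\PolyTriple{k+1}{E}\subseteq U_h^{*}(E)$ and the (scale-invariant) degrees of freedom are uniformly bounded functionals, so a standard Bramble--Hilbert and scaling argument (see~\cite{AyusoLipnikovManzini,projectors}) yields $\norm[0,E]{\bbu-\bbu_I^{*}}\lesssim h_E^{k+2}\seminorm[k+2,E]{\bbu}$, hence $\norm[0]{\bbu-\bbu_I^{*}}\lesssim h^{k+2}$. By the triangle inequality it then remains to bound $\norm[0]{\bbu_I^{*}-\uhstar}$.

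Since $\bbu_I^{*},\uhstar\in\Uhstarglob$, the restriction $(\bbu_I^{*}-\uhstar)_{|E}$ belongs to $U_h^{*}(E)$ for every $E$, so estimate~\eqref{eq:norm_vhstar} applies to $\bbu_I^{*}-\uhstar$ on each element. Comparing the degrees of freedom of $\bbu_I^{*}$ (which coincide with those of $\bbu$) with the conditions~\eqref{eq:uhstar_condition_postprocessing} defining $\uhstar$, one gets $\P^{k}_{\partial E}(\bbu_I^{*}-\uhstar)=\P^{k}_{\partial E}\bbu-\bs\lambda_h$ on the internal faces of $E$ (and $\bfzero$ on $\partial\Omega$, with the obvious handling of the boundary data) and $\P^{k-1}_{E}(\bbu_I^{*}-\uhstar)=\P^{k-1}_{E}(\bbu-\bbu_h)$. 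Squaring~\eqref{eq:norm_vhstar}, summing over $E\in\Th$ and using $\norm[0,E]{\P^{k-1}_{E}(\bbu-\bbu_h)}\le\norm[0,E]{\P^{k}_{E}\bbu-\bbu_h}$ together with~\eqref{eq:superconvergence}, the proof reduces to the multiplier estimate
\begin{equation*}
\sum_{E\in\Th}h_E\,\norm[0,\partial E]{\P^{k}_{\partial E}\bbu-\bs\lambda_h}^{2}\lesssim h^{2(k+2)}.
\end{equation*}

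This last bound is the heart of the matter and the step I expect to be the main obstacle. Fixing $E$, set $\bs\mu:=\bs\lambda_h-\P^{k}_{\partial E}\bbu$ on $\partial E^{I}$ and $\bs\mu:=\bfzero$ on the remaining faces of $E$. The key is to test the first equation of the hybrid Problem~\eqref{problem:discrete_hybrid} with a tailored $\bftau_h^{E}\in\Sigma_h(E)$, extended by zero outside $E$ (admissible since $\tilde{\Sigma}_h(\Th)$ is discontinuous): I would prescribe its boundary moments so that $(\bftau_h^{E}\bbn)_{|f}=h_E\,(\bs\mu)_{|f}$ on each face $f$ of $E$, and set the divergence moments~\eqref{eq:stress_divergenceDofs} to zero, so that $\bdiv\bftau_h^{E}=\bbd_{RM}\in\RM(E)$ is the rigid-body part forced by the boundary data via~\eqref{eq:divergenceRM}--\eqref{eq:div2} (the traction data and this rigid divergence being compatible by construction). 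A scaling argument on~\eqref{eq:div1}--\eqref{eq:div2} gives $\norm[0,E]{\bdiv\bftau_h^{E}}\lesssim h_E^{-1/2}\norm[0,\partial E]{\bftau_h^{E}\bbn}$, and, viewing $\bftau_h^{E}=\C\teps(\bbw)$ as the stress of the pure-traction problem with load $-\bdiv\bftau_h^{E}$ and traction $\bftau_h^{E}\bbn$,~\cite[Lemma 5.1]{ARTIOLI2017155} yields
\begin{equation*}
\norm[0,E]{\bftau_h^{E}}\lesssim h_E\norm[0,E]{\bdiv\bftau_h^{E}}+h_E^{1/2}\norm[0,\partial E]{\bftau_h^{E}\bbn}\lesssim h_E^{3/2}\norm[0,\partial E]{\bs\mu},\qquad \norm[0,E]{\bdiv\bftau_h^{E}}\lesssim h_E^{1/2}\norm[0,\partial E]{\bs\mu}.
\end{equation*}
The delicate point is precisely this $h_E$-rescaling of the test data, which makes the bounds scale so that no global quasi-uniformity of $\{\Th\}$ is required, only the local regularity~\ref{meshA_1}--\ref{meshA_3}.

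Finally, using $\bfsigma=\C\teps(\bbu)$ and integration by parts one has $a_E(\bftau,\bfsigma)$-type identity $a_E(\bfsigma,\bftau)+\int_E\bdiv\bftau\cdot\bbu~\dEl=\int_{\partial E}\bftau\bbn\cdot\bbu~\df$ for every $\bftau\in\hat{\Sigma}(E)$; subtracting this (with $\bftau=\bftau_h^{E}$) from the hybrid equation tested with $\bftau_h^{E}$, using that $\bftau_h^{E}\bbn$ and $\bdiv\bftau_h^{E}$ are polynomials of degree $k$ (to replace $\bbu$ by $\P^{k}_{\partial E}\bbu$, resp.\ $\P^{k}_{E}\bbu$), and inserting an arbitrary $\bs\pi_E\in T_k(E)$ to exploit the polynomial consistency of $a_E^{h}$, one arrives at
\begin{equation*}
h_E\,\norm[0,\partial E]{\bs\mu}^{2}=a_E^{h}(\bfsigma_h-\bs\pi_E,\bftau_h^{E})+a_E(\bs\pi_E-\bfsigma,\bftau_h^{E})+\int_E\bdiv\bftau_h^{E}\cdot(\bbu_h-\P^{k}_{E}\bbu)~\dEl.
\end{equation*}
Estimating the right-hand side by the continuity of $a_E^{h}$ and $a_E$ (with respect to $\norm[0,E]{\cdot}$, as in the stability analysis of~\cite{ARTIOLI2018978}) and Cauchy--Schwarz, dividing by $\norm[0,\partial E]{\bs\mu}$, choosing $\bs\pi_E$ as the $L^{2}(E)$-best approximation of $\bfsigma$ in $T_k(E)$ so that $\norm[0,E]{\bfsigma-\bs\pi_E}\lesssim h_E^{k+1}\seminorm[k+1,E]{\bfsigma}$, then squaring and summing over $E$ gives
\begin{equation*}
\sum_{E\in\Th}h_E\,\norm[0,\partial E]{\bs\mu}^{2}\lesssim h^{2}\,\norm[\Sigma]{\bfsigma-\bfsigma_h}^{2}+h^{2(k+2)}\seminorm[k+1,\Omega]{\bfsigma}^{2}+\norm[U]{\P_h^{k}\bbu-\bbu_h}^{2}\lesssim h^{2(k+2)},
\end{equation*}
by the error estimate~\eqref{eq:error_estimates} and~\eqref{eq:superconvergence}. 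Plugging this into the reduction above and combining with the interpolation estimate of the first step yields $\norm[0]{\bbu-\uhstar}\lesssim h^{k+2}$, concluding the proof; the only routine points left are the treatment of the boundary faces and the explicit constants in the continuity bounds.
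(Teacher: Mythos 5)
Your proposal is correct and, in its skeleton, coincides with the paper's proof: both introduce the non-conforming interpolant of $\bbu$ fixed by the degrees of freedom~\eqref{eq:dofsNonConformingVEM_boundary}--\eqref{eq:dofsNonConformingVEM_internal}, split $\bbu-\uhstar$ by the triangle inequality, apply the stability bound~\eqref{eq:norm_vhstar} of Proposition~\ref{prop:norm_and_seminorm_vhstar_estimate} elementwise to the difference between the interpolant and $\uhstar$, and reduce the whole matter to the superconvergence~\eqref{eq:superconvergence} plus the multiplier bound $\sum_{E}h_E\norm[0,\partial E]{\P^{k}_{\partial E}\bbu-\bs\lambda_h}^{2}\lesssim h^{2(k+2)}$. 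Where you genuinely diverge is in how that last bound is obtained: the paper dispatches it in one line by citing the Lagrange-multiplier estimates of~\cite[Theorem 1.4]{ArnoldBrezzi_1985}, whereas you prove it directly by building, on each element, a test stress $\bftau_h^{E}\in\Sigma_h(E)$ with normal trace $h_E$ times the multiplier error and vanishing divergence moments against $\RM_k^{\perp}(E)$, then combining the hybrid equation, the consistency of $a_E^h$ on $T_k(E)$, and Theorem~\ref{theorem:convergenceAndSuperconvergence}. This is the Arnold--Brezzi duality argument transplanted into the VEM setting, and it is arguably the more self-contained route, since the cited theorem is stated for a different (finite element, second-order elliptic) discretization and its VEM analogue is precisely what your construction supplies. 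The price is that you must control the stabilization contribution in $a_E^h(\bfsigma_h-\bs\pi_E,\bftau_h^{E})$, which needs the trace-type inequality $h_E^{1/2}\norm[0,\partial E]{\bftau\bbn}\lesssim\norm[0,E]{\bftau}+h_E\norm[0,E]{\bdiv\bftau}$ for discrete stresses from the stability analysis of~\cite{ARTIOLI2017155,ARTIOLI2018978}; you only allude to this (``continuity of $a_E^h$''), but the scalings do close, the extra divergence term entering at order $h_E^{2}\norm[0,E]{\bdiv(\bfsigma_h-\bs\pi_E)}=O(h^{k+2})$, so no accuracy is lost. Apart from this extra machinery, both proofs rest on the same ingredients.
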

%%
%% proof
%%
\begin{proof}
For the displacement field $\bbu$, we define the natural non-conforming interpolant $\uhstartilde\in\Uhstarglob$ such that 
\begin{equation}\label{eq:interpolant_condition_postprocessing}
    \left\{
        \begin{aligned}
          & \P^{k}_{\partial E}\left(\uhstartilde -\bbu \right)=\bfzero, \\
          & \P_E^{k-1}\left(\uhstartilde - \bbu \right)=\bfzero.
        \end{aligned}
    \right.
\end{equation}
We observe that both the function $\uhstar$ and the interpolant $\uhstartilde$ are well-defined for the unisolvence of the degrees of freedom, see Proposition \ref{prop:h_unisolvenceDofsNC} with $l=k+1$. Writing now
\begin{equation}\label{eq:decoposition_postprocessing}
    \bbu -\uhstar = \left(\bbu-\uhstartilde\right) +\left(\uhstartilde -\uhstar\right)
\end{equation}
and using the triangle inequality, we have that
\begin{equation}\label{eq:triangle_inequality_postprocessing}
    \begin{aligned}
        \norm[0]{\bbu-\uhstar}&\leq \norm[0]{\bbu-\uhstartilde}+\norm[0]{\uhstartilde-\uhstar}.
    \end{aligned}
\end{equation}
Using standard arguments, see i.e.~\cite{Brenner-Scott:2008}, we can estimate the first term as follows
\begin{equation}\label{eq:estimate_u_uhstar}
    \norm[0]{\bbu-\uhstartilde}\lesssim h^{k+2}.
\end{equation}
Otherwise, to estimate $\norm[0]{\uhstartilde-\uhstar}$,  we observe that  from~\eqref{eq:uhstar_condition_postprocessing} and~\eqref{eq:interpolant_condition_postprocessing} we have
\begin{equation}\label{eq:information_uhstar_uhstartilde}
    \left\{
        \begin{aligned}
          & \P^{k}_{\partial E}\left(\uhstar -\uhstartilde \right)=\P^{k}_{\partial E}\left(\bs\lambda_h -\bbu \right),\\
          & \P^{k-1}_E\left(\uhstar -\uhstartilde  \right)=\P^{k-1}_E\left(\bbu_h -\bbu  \right) =\P^{k-1}_E\left(\bbu_h -\P^{k}_E\bbu  \right) .
        \end{aligned}
    \right.
    \end{equation}
Then, always taking $l=k+1$, for each element $E\in\Th$ we can use estimate~\eqref{eq:norm_vhstar} of Proposition~\ref{prop:norm_and_seminorm_vhstar_estimate}, the continuity of the projection operator and we get
\begin{equation}
\begin{aligned}
\norm[0,E]{\uhstar-\uhstartilde}
&\lesssim h_E^{1/2} \norm[0,\partial E]{\P^{k}_{\partial E}(\uhstar-\uhstartilde)}+ \norm[0,E]{\P^{k-1}_E(\uhstar-\uhstartilde)}\\
&\lesssim h_E^{1/2} \norm[0,\partial E]{\P^{k}_{\partial E}(\bs\lambda_h -\bbu)}+ \norm[0,E]{\P^{k-1}_E(\bbu_h -\bbu)}\\
&\lesssim h_E^{1/2} \norm[0,\partial E]{\P^{k}_{\partial E}(\bs\lambda_h -\bbu)}+ \norm[0,E]{\P^{k-1}_E\left(\bbu_h -\P^{k}_E\bbu  \right)}\\
&\lesssim h_E^{1/2} \norm[0,\partial E]{\P^{k}_{\partial E}(\bs\lambda_h -\bbu)}+ \norm[0,E]{\bbu_h -\P^{k}_E\bbu}
\end{aligned}
\end{equation}
Now, exploiting Theorem~\ref{theorem:convergenceAndSuperconvergence}, the estimates for the Lagrange multipliers in~\cite[Theorem 1.4]{ArnoldBrezzi_1985} and summing all the local estimates, we get
\begin{equation}\label{eq:estimate_uhstar_interpolant}
    \norm[0]{\uhstar-\uhstartilde} \lesssim h^{k+2}.
\end{equation}
Estimate~\eqref{eq:convergence_estimate_postprocessing} now follows from~\eqref{eq:triangle_inequality_postprocessing}, \eqref{eq:estimate_u_uhstar} and \eqref{eq:estimate_uhstar_interpolant}.
\end{proof}
	%%
%% SECTION -> NUMERICAL RESULTS
%%
\graphicspath{{Figures/}}
\newcommand{\sizeGraph}{0.35}
\newcommand{\sizeMesh}{0.24}
%-> intro
\section{Numerical results}~\label{section:numerical_results}
In this section, we numerically assess the behavior of the proposed VEM schemes through some numerical examples where the analytical solution is available. We first show some convergence results then we exhibit the improvement that one has when applying the hybridization procedure. The numerical scheme is developed inside the vew++ library, a c++ code realized at the University Milano-Bicocca (https://sites.google.com/view/vembic/home).
\subsection{Test cases}
We consider the following two problems on the unit cube domain $\O=\left[ 0,1\right]^3$ and the materials are homogeneous and isotropic for all experiments.
\paragraph{\textbf{Test a: compressible material}}~\label{eq:test_a} We consider an elastic problem with the following exact displacement solution
\begin{equation}
	\begin{aligned}
	\bbu = (10\,S(x,y,z),10\,S(x,y,z),10\,S(x,y,z))^T,
	\end{aligned}
\end{equation}
and loading term
\begin{equation}
	\begin{aligned}
	\bbf =
	\begin{pmatrix}
	 -10\pi^2 ((\lambda + \mu) \cos(\pi x) \sin(\pi y + \pi z) - (\lambda + \mu)S(x, y, z))\\
	 -10\pi^2 ((\lambda + \mu) \cos(\pi y) \sin(\pi x + \pi z) - (\lambda + \mu)S(x, y, z))\\
	 -10\pi^2 ((\lambda + \mu) cos(\pi z) \sin(\pi x + \pi y) - (\lambda + \mu)S(x, y, z))
	\end{pmatrix}
	\end{aligned}
\end{equation}
where $S(x, y, z) = \sin(\pi x) \sin(\pi y) \sin(\pi z)$. For this problem, we impose homogeneous natural boundary conditions and the Lam\`e constants are set as $\lambda =1$ and $\mu=1$.
\paragraph{\textbf{Test b: nearly incompressible material}}~\label{eq:test_b}
The elastic problem has the following displacement solution:
\begin{eqnarray*}
\left\{
\begin{array}{l}
%\left(\begin{array}{c}
u_1={\sin(2\pi x)}^2\left(\cos(2\pi y)\sin(2\pi y){\sin(2\pi z)}^2-\cos(2\pi z)\sin(2\pi z){\sin(2\pi y)}^2 \right)\\
u_2={\sin(2\pi y)}^2\left(\cos(2\pi z)\sin(2\pi z){\sin(2\pi x)}^2-\cos(2\pi x)\sin(2\pi x){\sin(2\pi z)}^2 \right)\\
u_3={\sin(2\pi z)}^2\left(\cos(2\pi x)\sin(2\pi x){\sin(2\pi y)}^2-\cos(2\pi y)\sin(2\pi y){\sin(2\pi x)}^2 \right)
\end{array}.
\right .
\end{eqnarray*}
As before the loading term $\bbf$ is computed accordingly. For this problem we consider the following Lam\`e coefficients: $\lambda = 10^5$ and $\mu=0.5$. Also for this example we consider homogeneous natural boundary conditions.
\subsection{Meshes}
We discretize our domain in four different ways, see Fig.~\ref{fig:meshes}:
%
% MESHES
%
\begin{itemize}
	\item \texttt{Cube}, a uniform mesh composed by structured cubes;
	\item \texttt{Tetra},  standard tetrahedral mesh built via the Delaunay criterion~\cite{tetgen};
	\item \texttt{CVT}, a Voronoi tasselation optimized by the Lloyd algorithm~\cite{CVT1999}
	\item \texttt{Rand}, a Voronoi tassellation achieved with random control points and without optimization.
\end{itemize}
\begin{figure}[ht]
	\centering
	\renewcommand{\thesubfigure}{}
	\subfigure[\texttt{Cube}]{\includegraphics[width=\sizeMesh\textwidth,trim = 0mm 0mm 0mm 0mm, clip]{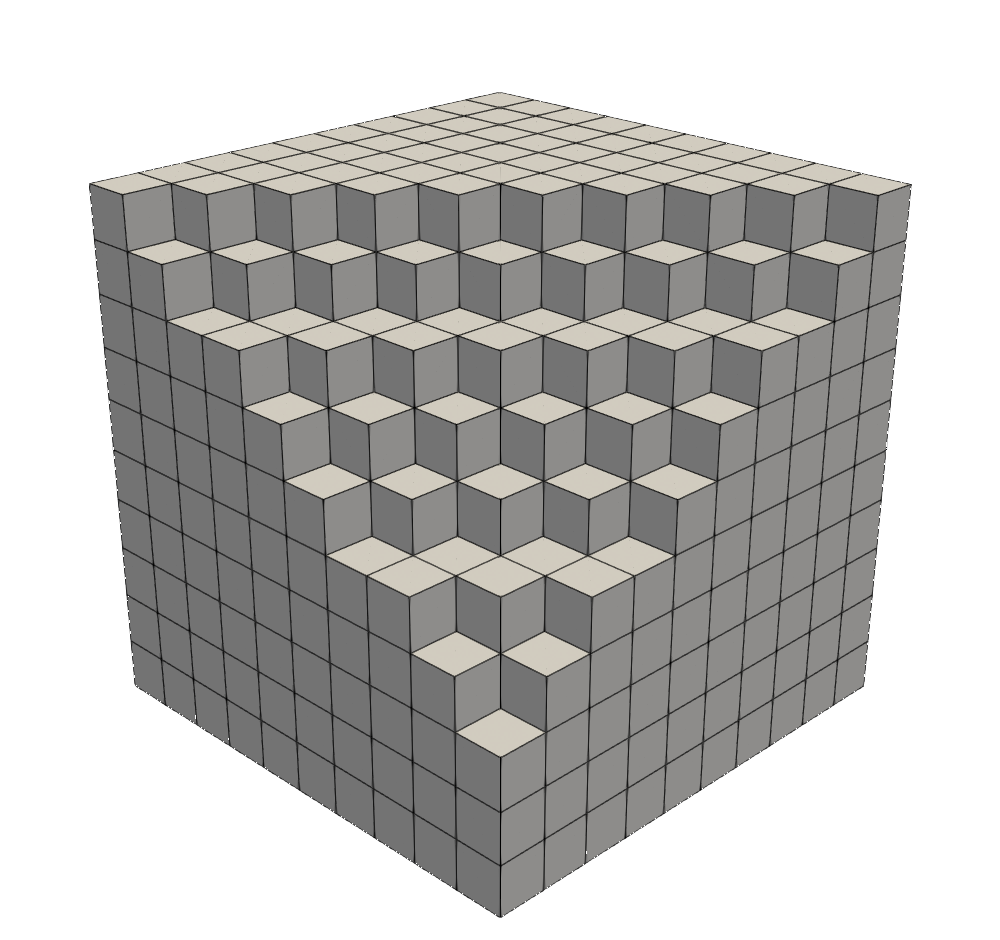}}
	\subfigure[\texttt{Tetra}]{\includegraphics[width=\sizeMesh\textwidth,trim = 0mm 0mm 0mm 0mm, clip]{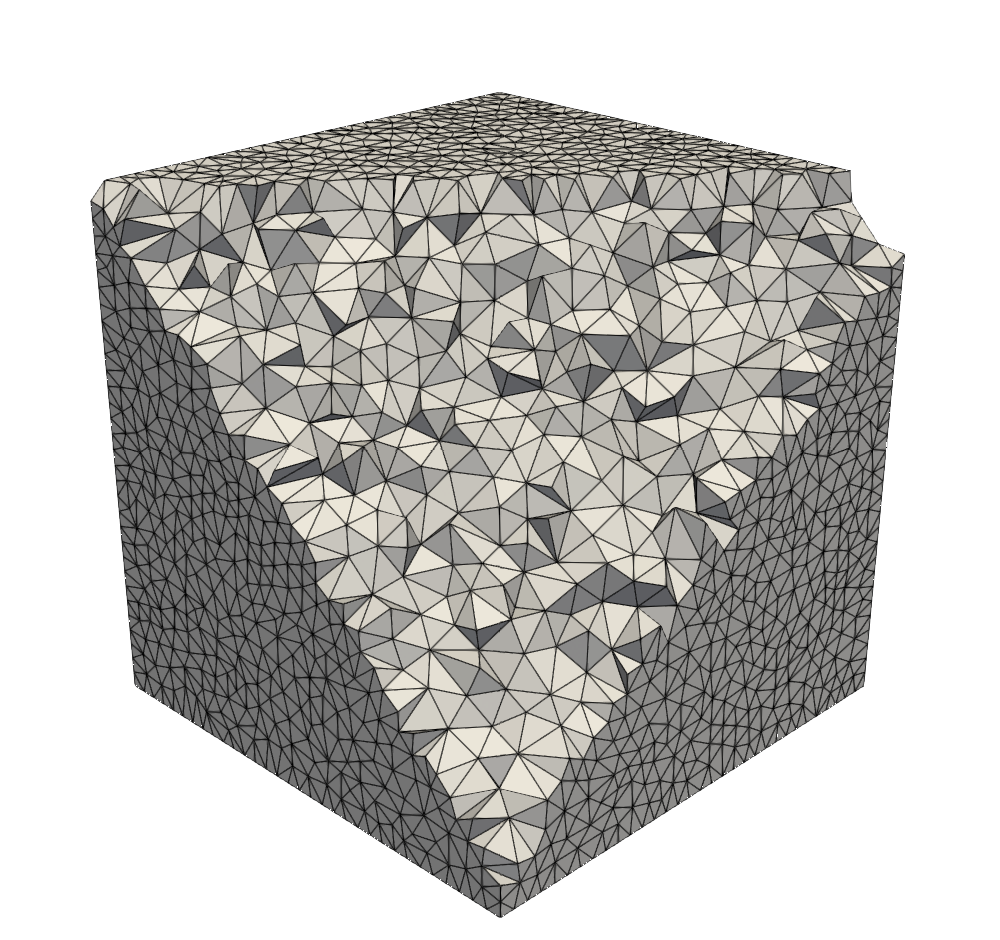}}
	\subfigure[\texttt{CVT}]{\includegraphics[width=\sizeMesh\textwidth,trim = 0mm 0mm 0mm 0mm, clip]{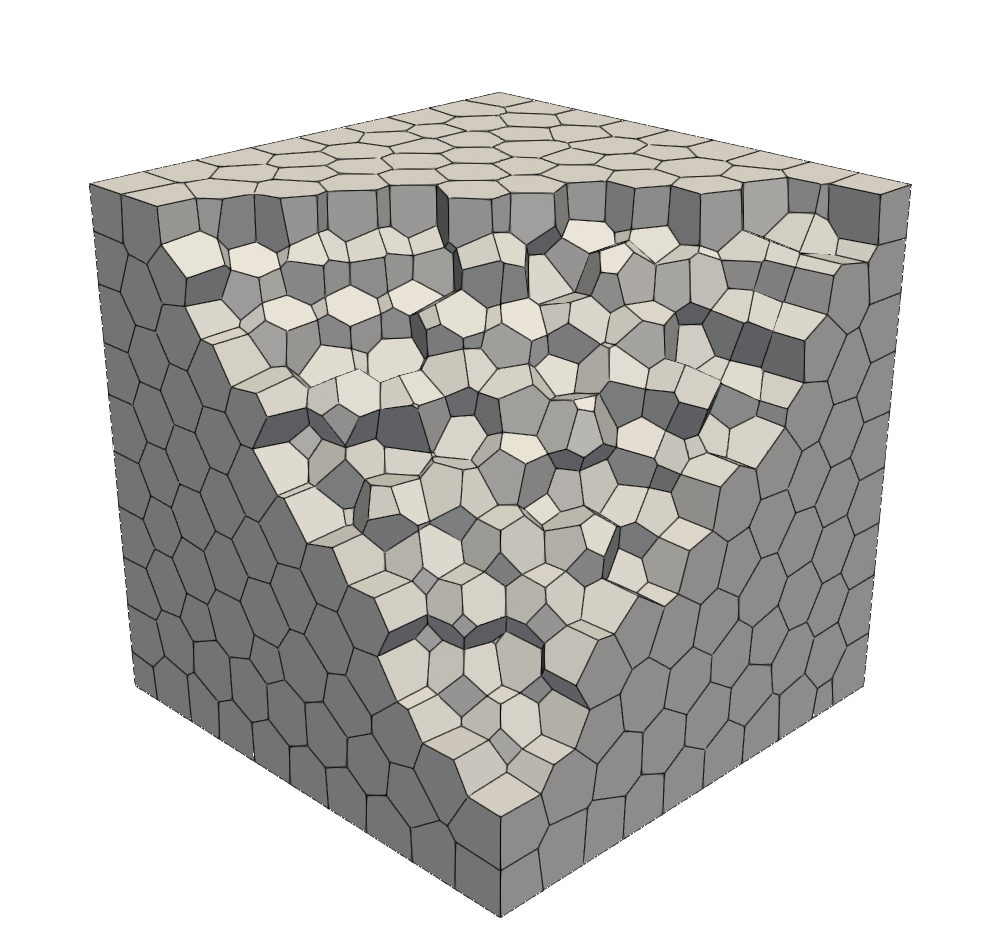}}
	\subfigure[\texttt{Rand}]{\includegraphics[width=\sizeMesh\textwidth,trim = 0mm 0mm 0mm 0mm, clip]{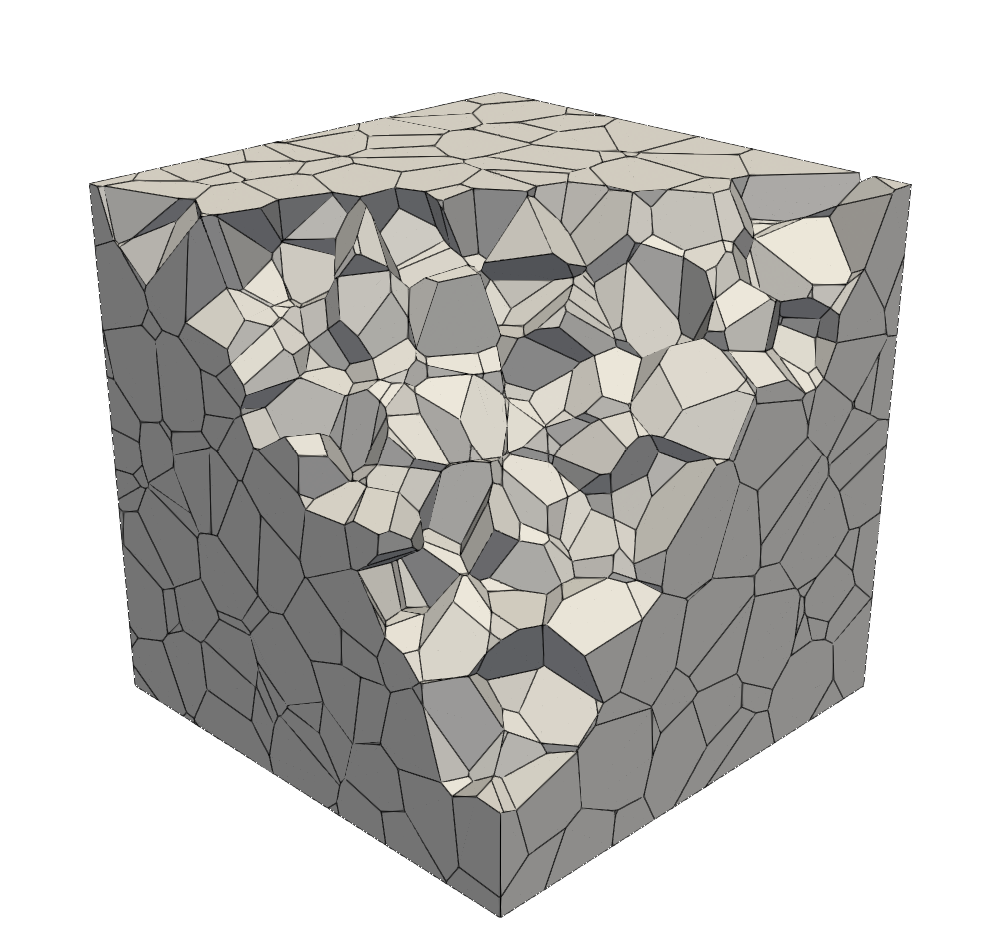}}
	\caption{Overview of adopted meshes for convergence assessment numerical tests.}
	\label{fig:meshes}
\end{figure}
The meshes taken into account have two different levels of complexity. The first two meshes, \texttt{Cube} and \texttt{Tetra}, are very standard and they are composed by regular shaped elements, i.e., high quality tetrahedrons and standard cubes. Instead, the last two meshes, \texttt{CVT} and \texttt{Rand}, represent an interesting challenge for the robustness of our approach. Indeed, they have elements with small faces or edges, and we remark that such a situation is not covered by the developed theory, i.e., assumptions~\ref{meshA_1}, \ref{meshA_2} and \ref{meshA_3}.
To verify the convergence rate, for each type of mesh, we take the following mesh-size $h$:
\begin{equation*}
    h=\frac{1}{N_E}\sum_{E\in\Th} h_E,
\end{equation*}
where $N_E$ is the number of the elements in the mesh $~\Th$ and $h_E$ is the diameter of the polyhedron $E$.
\subsection{Convergence Analysis} 
Let $(\bfsigma,\bbu)$ and $(\bfsigma_h,\bbu_h)$ be the continuous and discrete VEM solution of our elasticity problem. In order to analyse the accuracy and the convergence rate, we compute the following error indicators:
\begin{itemize}
	\item[$\bullet$] $E_{\bbu} :=\norm[0]{\bbu - \bbu_h}$	
 \item[$\bullet$]
    $E_{\bfsigma, \bdiv}:=\norm[0]{\bdiv \bfsigma - \bdiv \bfsigma_h}$
 \item[$\bullet$] $E_{\bfsigma, \Pi}:=\norm[0]{\bfsigma - \Pi_E\bfsigma_h}$
 \item[$\bullet$] {Discrete error for the stress field:}
	\begin{equation*}
	E_{\bfsigma, \partial}   :=\sqrt{\sum_{f \in \Fh} h_f\int_{f} \kappa\, | (\bfsigma - \bfsigma_h)\bbn |^2},
	\end{equation*}
	where $\kappa=\frac{1}{2} {\rm tr}(\D)$ (the material is homogeneous).
\end{itemize}
For each test we build a sequence of four meshes with decreasing mesh size parameter $h$ and the trend of each error indicator is computed and compared to the expected convergence trend, which, for sufficiently regular data is $O(h^{k+1})$ in accordance to estimate~\eqref{eq:error_estimates} in Theorem~\ref{theorem:convergenceAndSuperconvergence}.
More precisely, since the discrete stress functions are virtual we use the last three errors ($	E_{\bfsigma, \bdiv}$, $E_{\bfsigma, \Pi}$, $E_{\bfsigma, \partial}$) to show the convergence for the stress field.

\begin{figure}[ht]
		\centering
		\subfigure[]{\includegraphics[width=\sizeGraph\textwidth,trim = 0mm 0mm 0mm 0mm, clip]{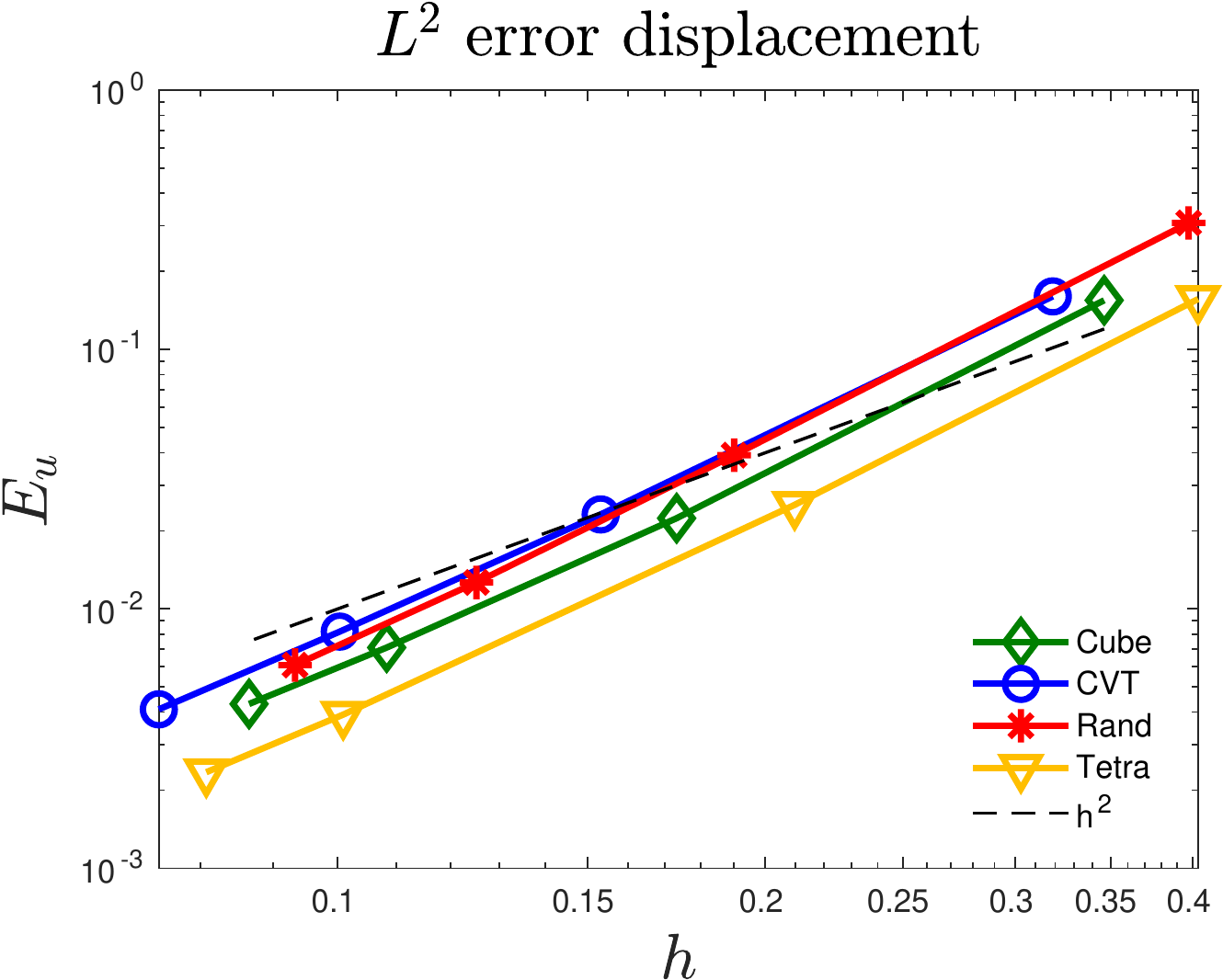}}
		\subfigure[]{\includegraphics[width=\sizeGraph\textwidth,trim = 0mm 0mm 0mm 0mm, clip]{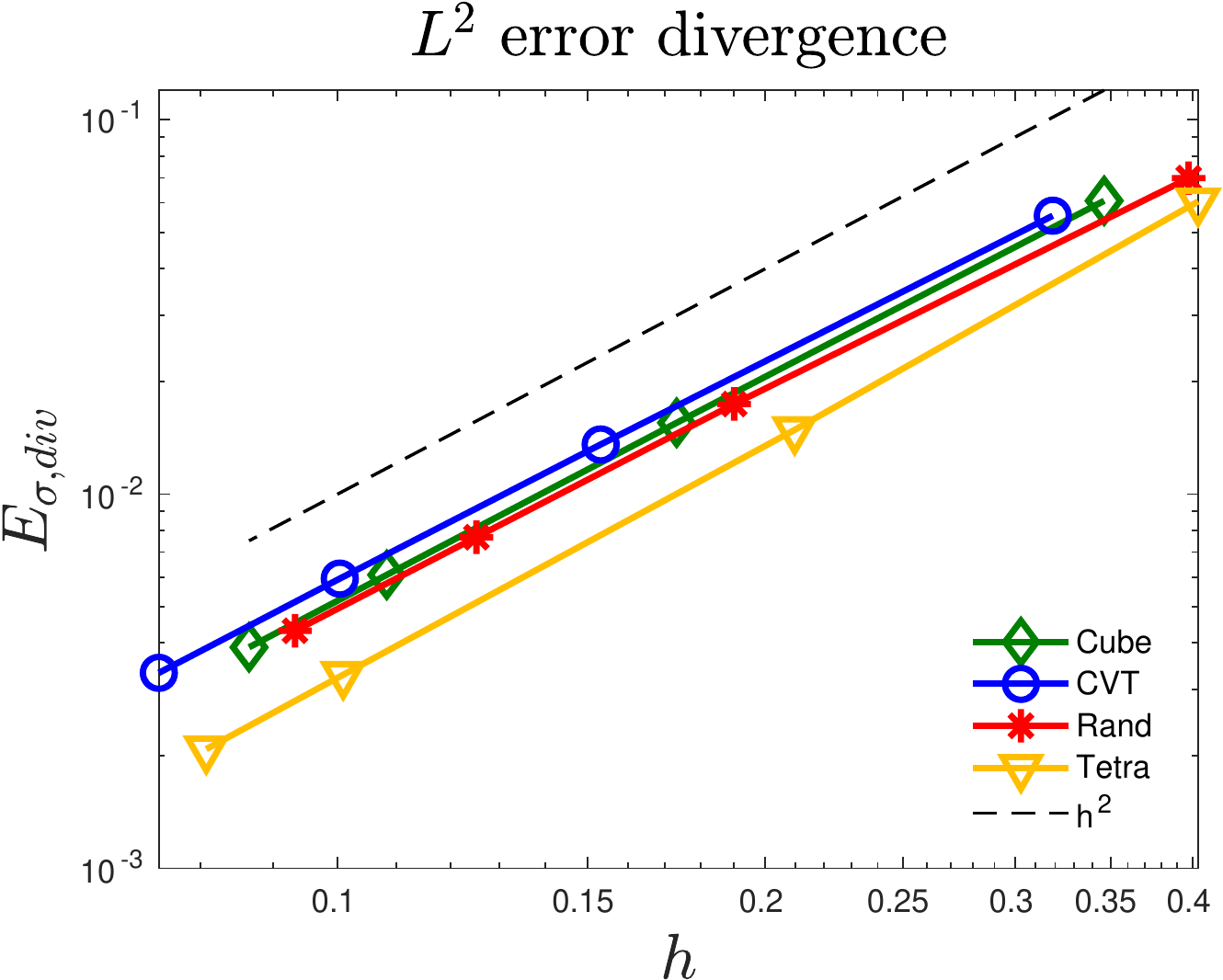}}\\
		\subfigure[]{\includegraphics[width=\sizeGraph\textwidth,trim = 0mm 0mm 0mm 0mm, clip]{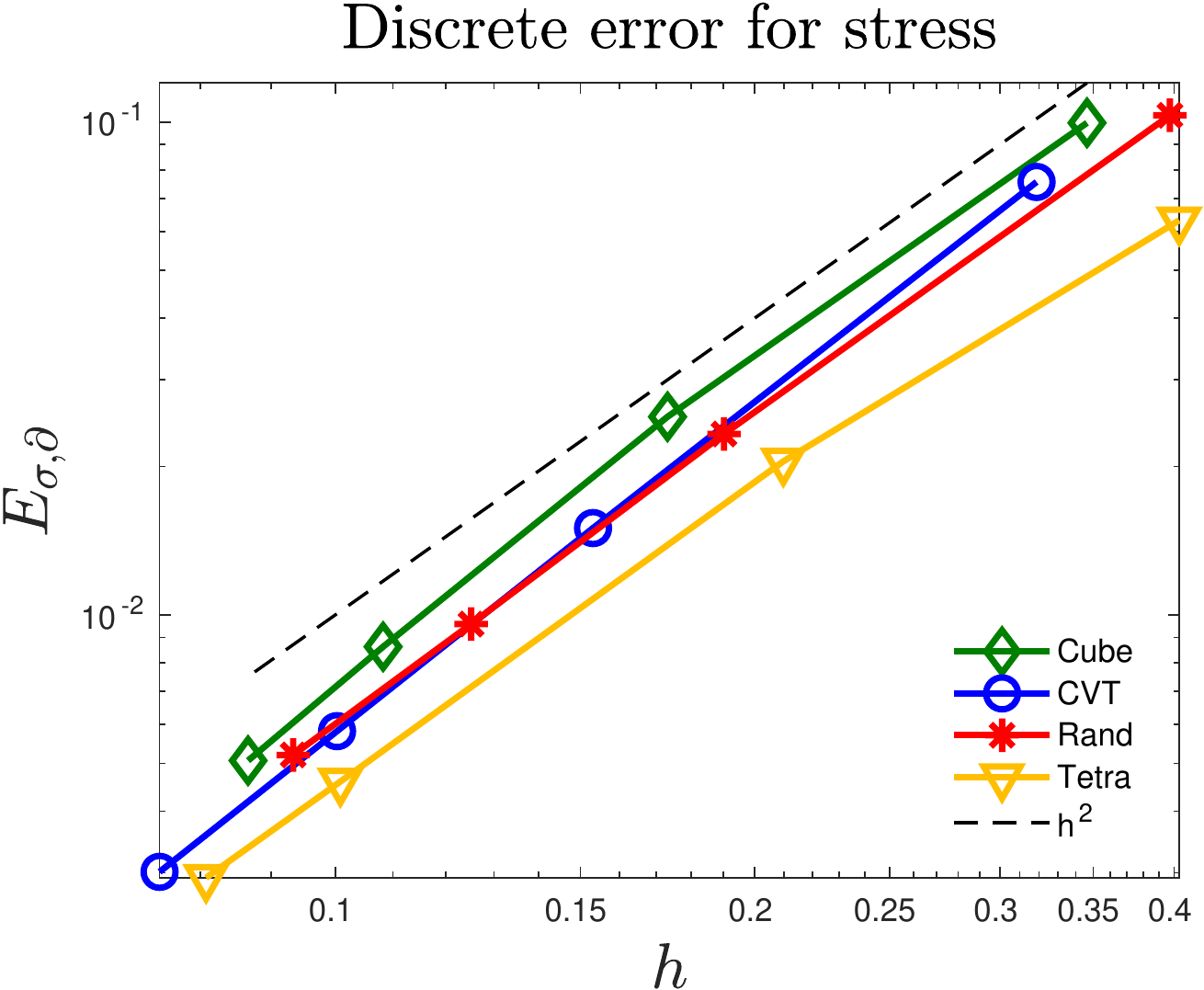}}
		\subfigure[]{\includegraphics[width=\sizeGraph\textwidth,trim = 0mm 0mm 0mm 0mm, clip]{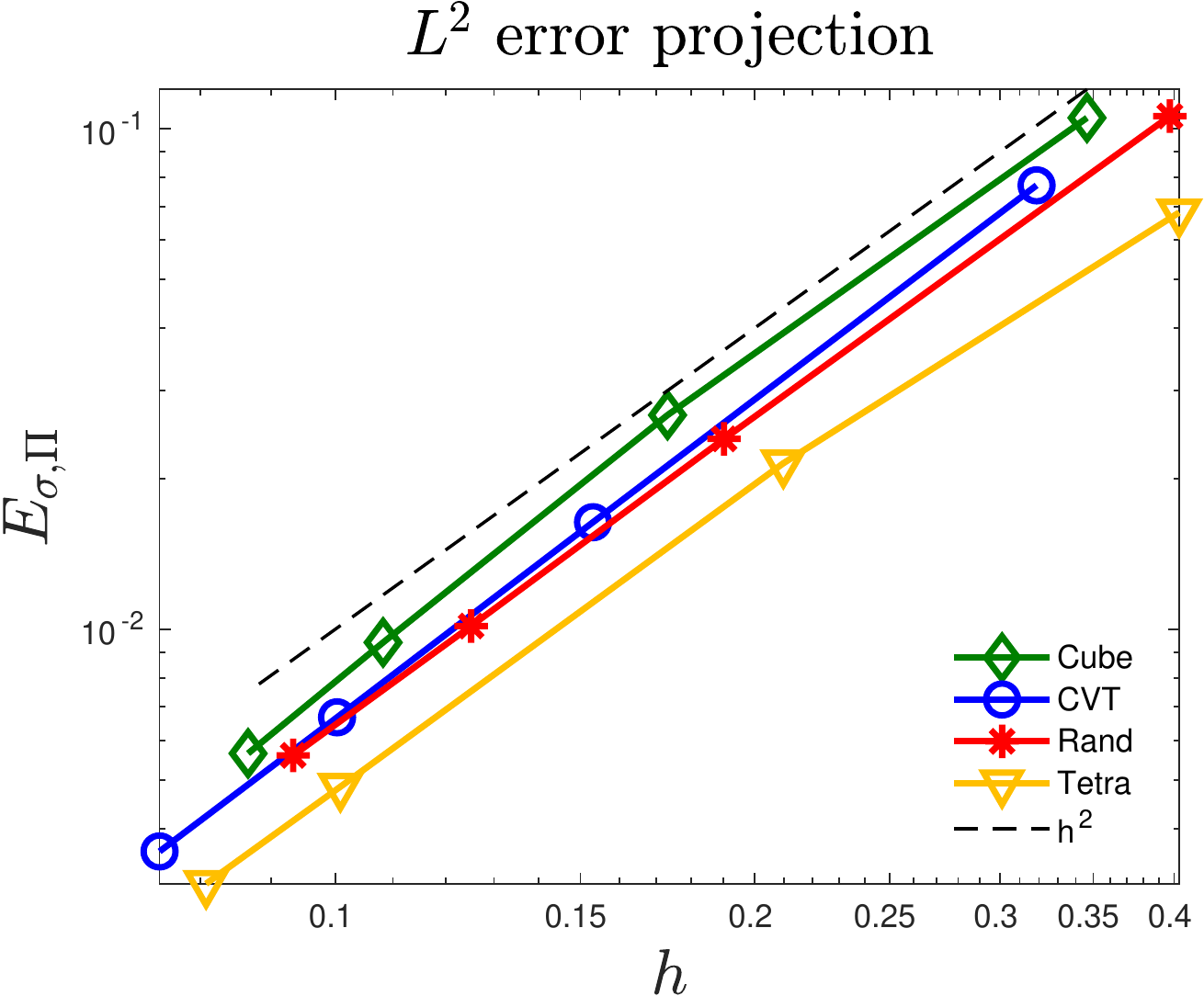}}\\
		\caption{\textbf{Test a: compressible material.} $h$-convergence results for all meshes for k=1.\label{fig:resuTest1}}
		%: (a) $E_{\bbu}$ error norm plot, (b) $E_{\bfsigma,\bdiv}$ error norm plot, (c) $E_{\bfsigma}$ error norm plot, (d) $E_{\bfsigma,\Pi}$ error norm plot.}		
\end{figure}
 	\begin{figure}[ht]
	\centering
	\subfigure[]{\includegraphics[width=\sizeGraph\textwidth,trim = 0mm 0mm 0mm 0mm, clip]{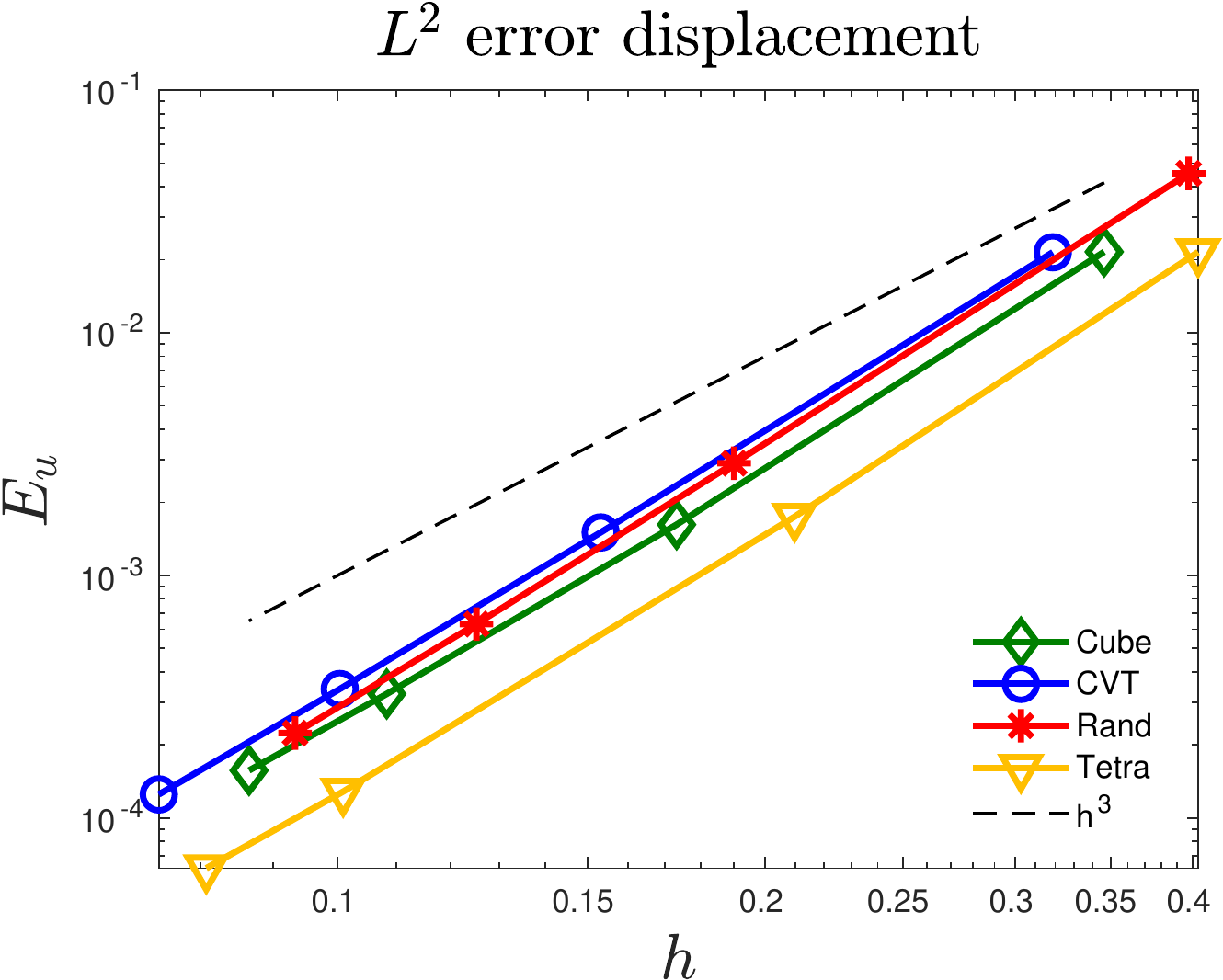}}
		\subfigure[]{\includegraphics[width=\sizeGraph\textwidth,trim = 0mm 0mm 0mm 0mm, clip]{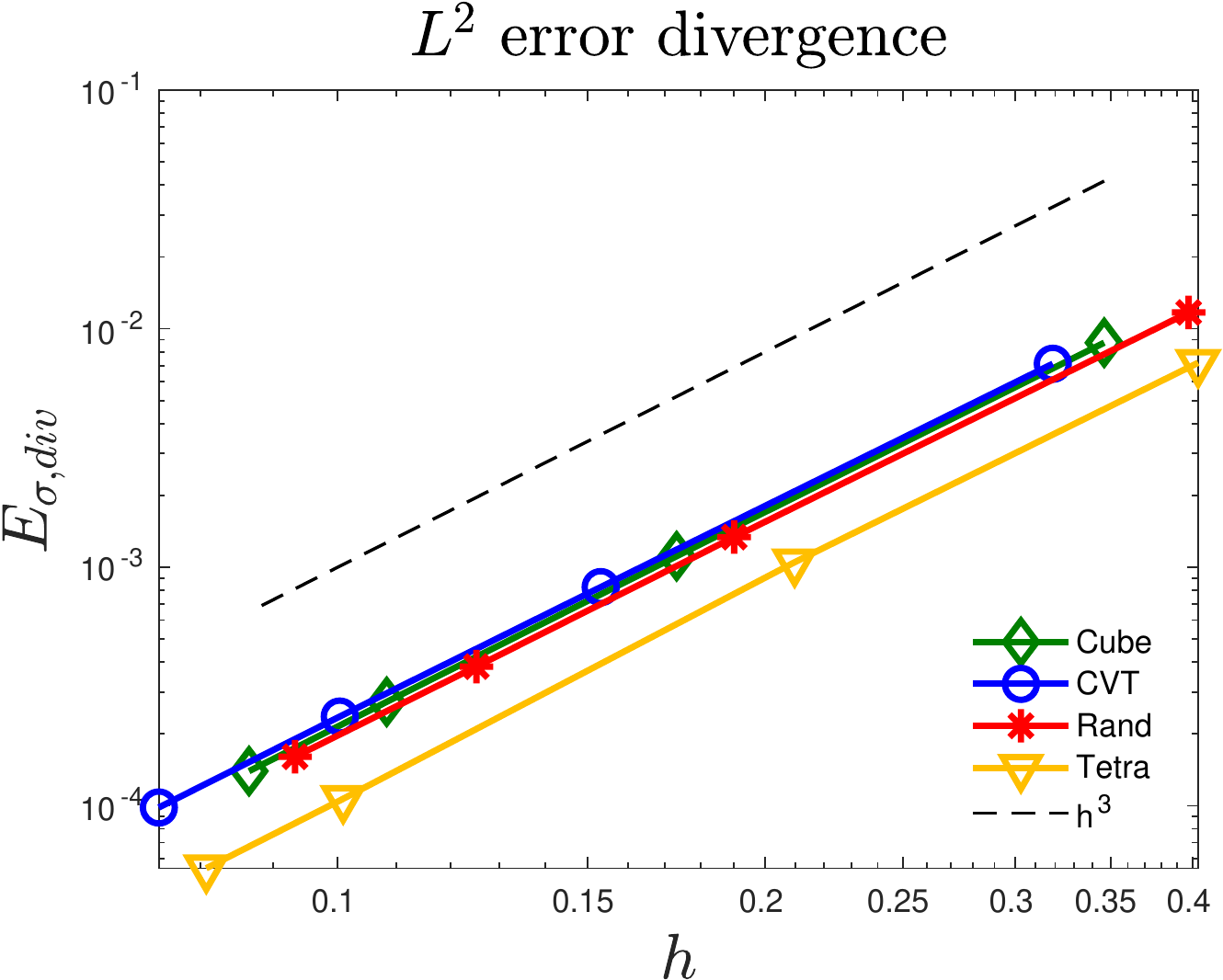}}\\
	\subfigure[]{\includegraphics[width=\sizeGraph\textwidth,trim = 0mm 0mm 0mm 0mm, clip]{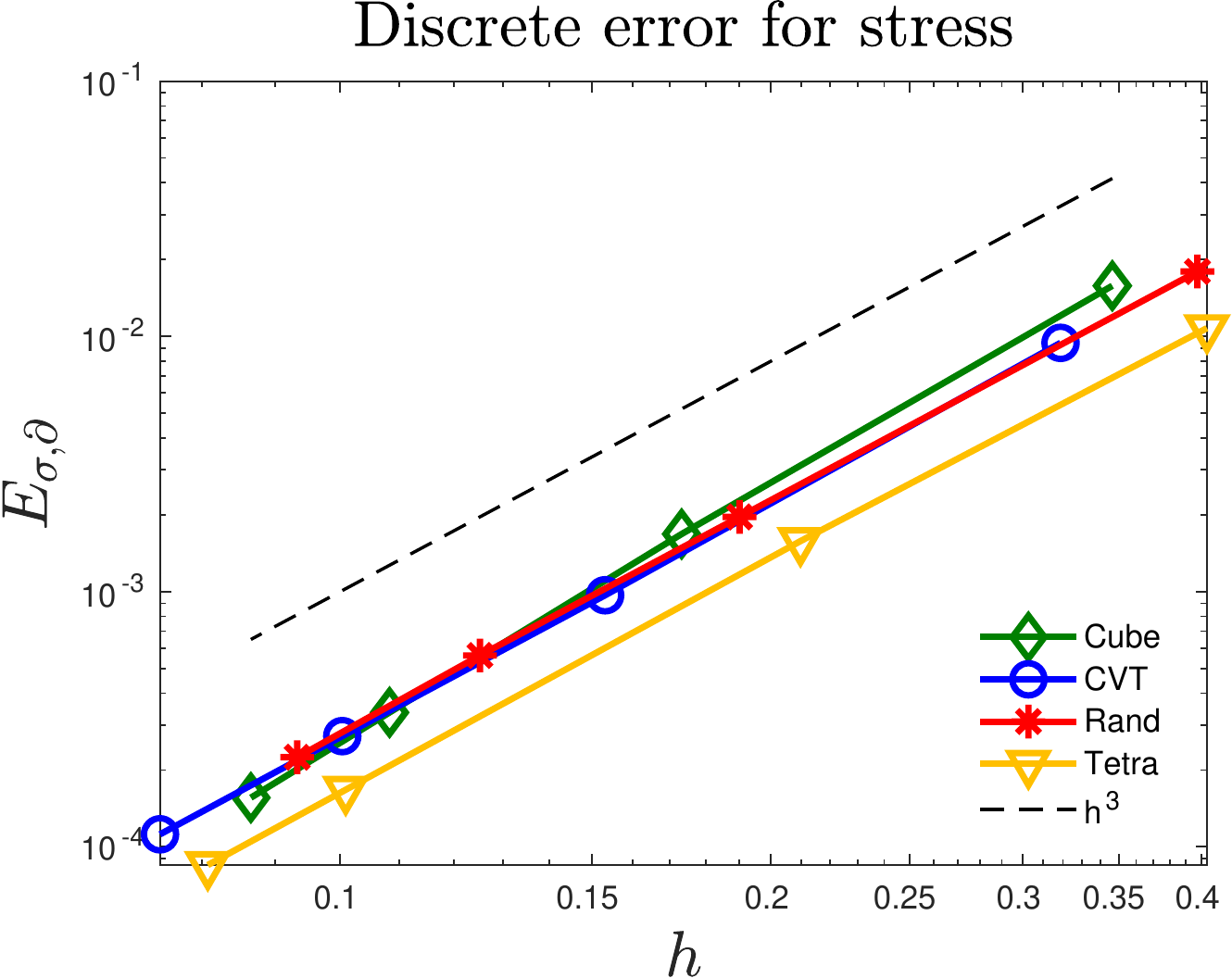}}
	\subfigure[]{\includegraphics[width=\sizeGraph\textwidth,trim = 0mm 0mm 0mm 0mm, clip]{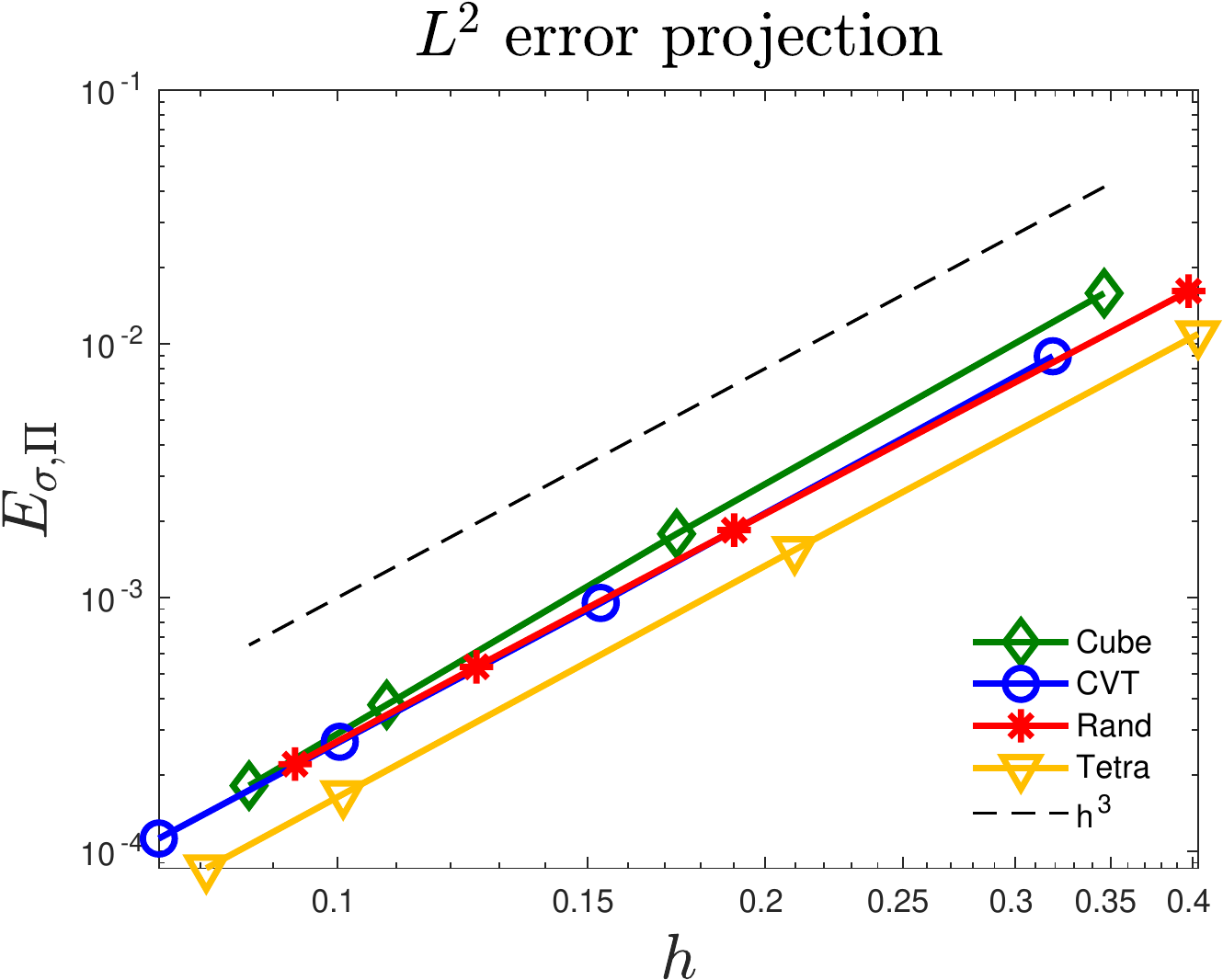}}\\
	\caption{\textbf{Test a: compressible material.} $h$-convergence results for all meshes for k=2.\label{fig:resuTest2}}
		%: (a) $E_{\bbu}$ error norm plot, (b) $E_{\bfsigma,\bdiv}$ error norm plot, (c) $E_{\bfsigma}$ error norm plot, (d) $E_{\bfsigma,\Pi}$ error norm plot.}		
\end{figure}
\begin{figure}[ht]
\centering
\subfigure[]{\includegraphics[width=\sizeGraph\textwidth,trim = 0mm 0mm 0mm 0mm, clip]{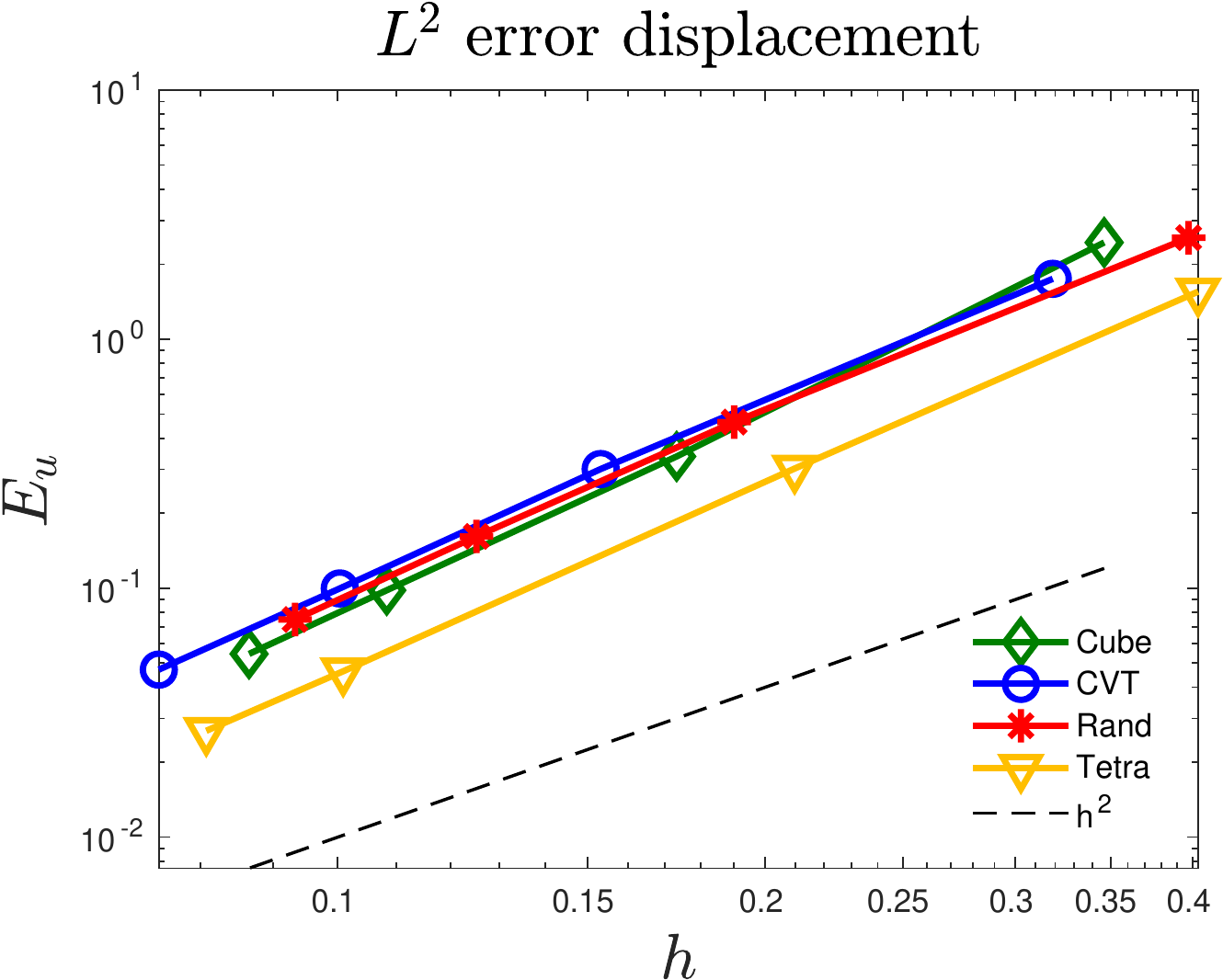}}
\subfigure[]{\includegraphics[width=\sizeGraph\textwidth,trim = 0mm 0mm 0mm 0mm, clip]{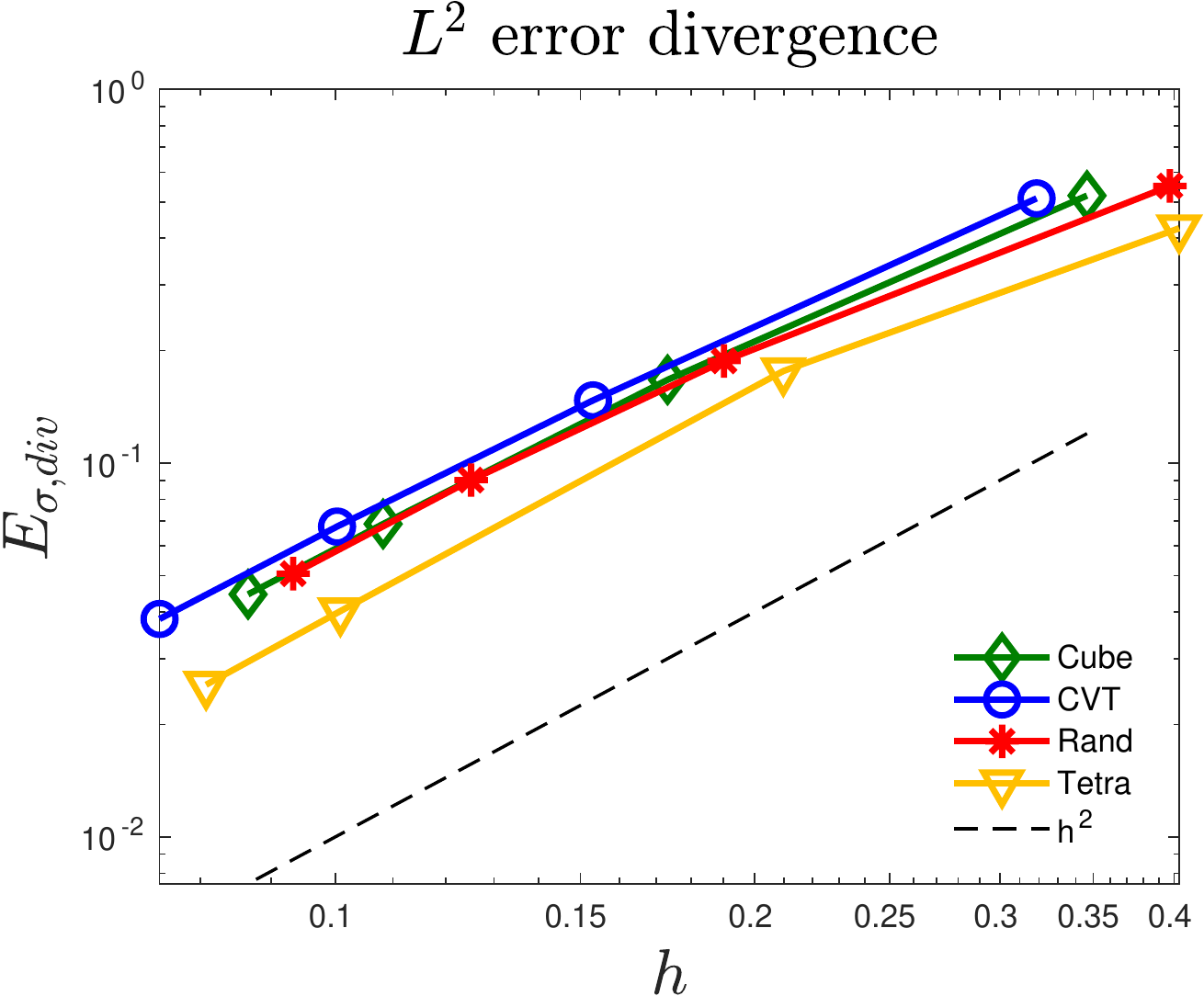}}\\
\subfigure[]{\includegraphics[width=\sizeGraph\textwidth,trim = 0mm 0mm 0mm 0mm, clip]{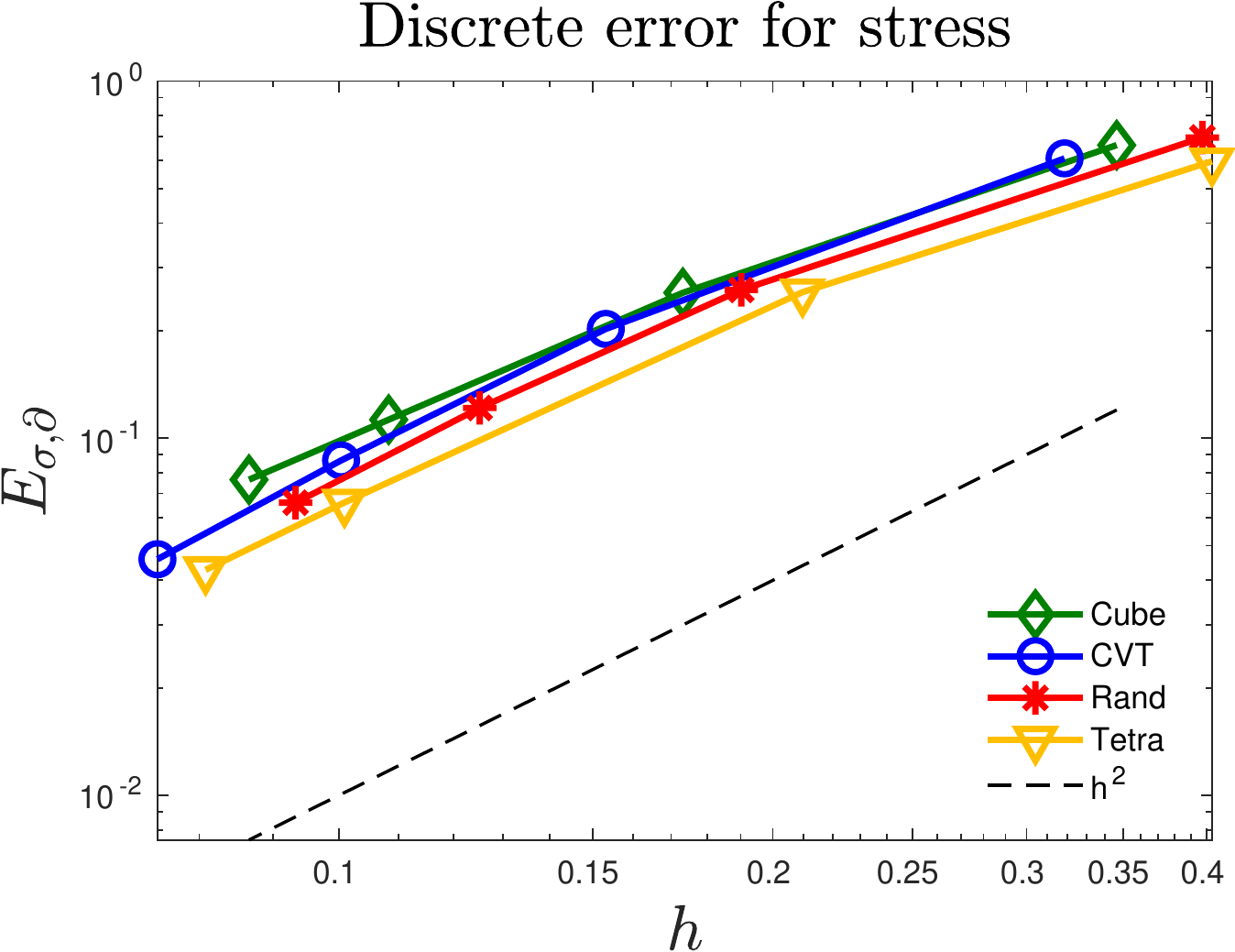}}
	\subfigure[]{\includegraphics[width=\sizeGraph\textwidth,trim = 0mm 0mm 0mm 0mm, clip]{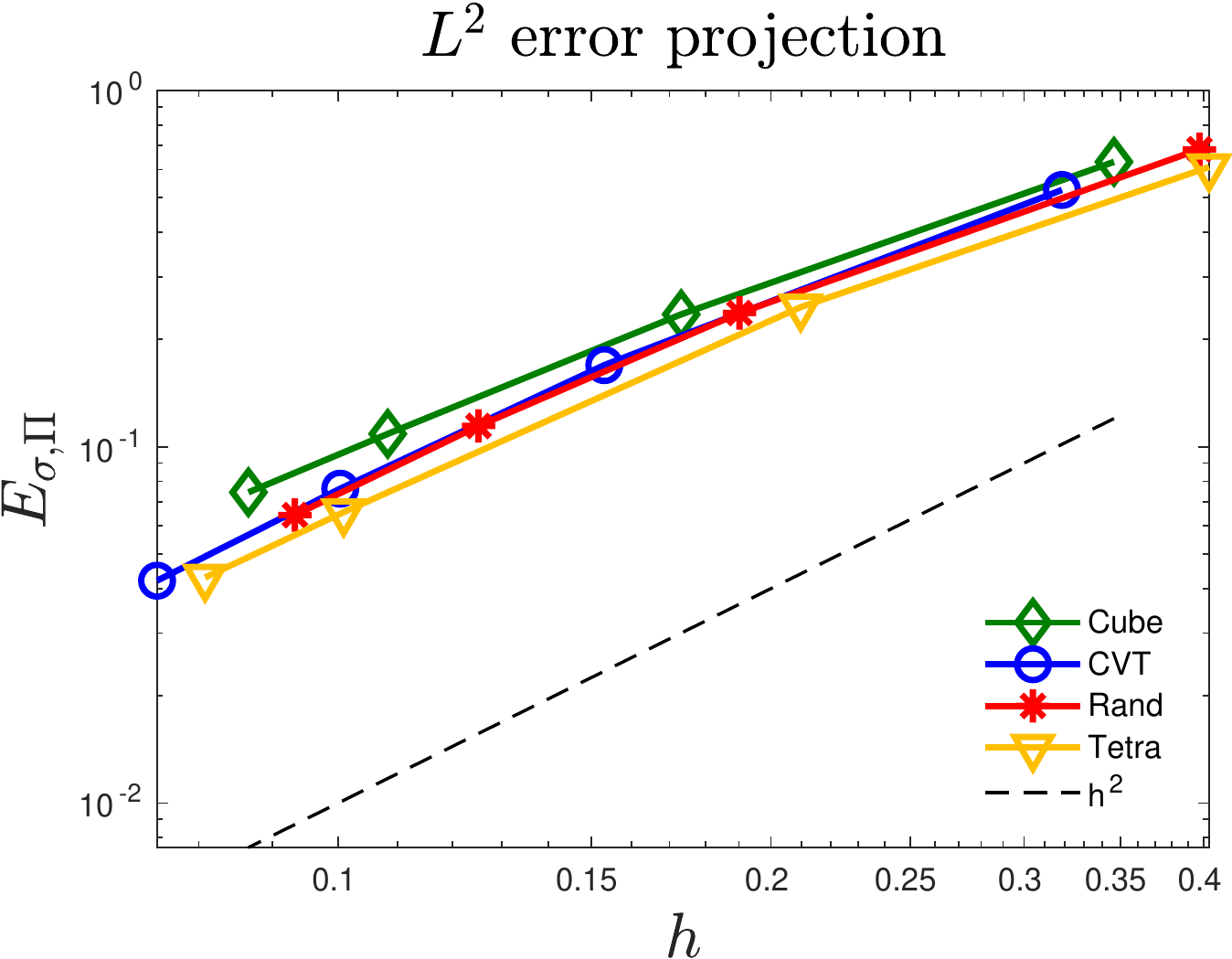}}\\
\caption{\textbf{Test b: nearly incompressible material.} $h$-convergence results for all meshes for k=1.\label{fig:resuTest3}}
	%: (a) $E_{\bbu}$ error norm plot, (b) $E_{\bfsigma,\bdiv}$ error norm plot, (c) $E_{\bfsigma}$ error norm plot, (d) $E_{\bfsigma,\Pi}$ error norm plot.}		
\end{figure}
\begin{figure}[ht]
\centering
\subfigure[]{\includegraphics[width=\sizeGraph\textwidth,trim = 0mm 0mm 0mm 0mm, clip]{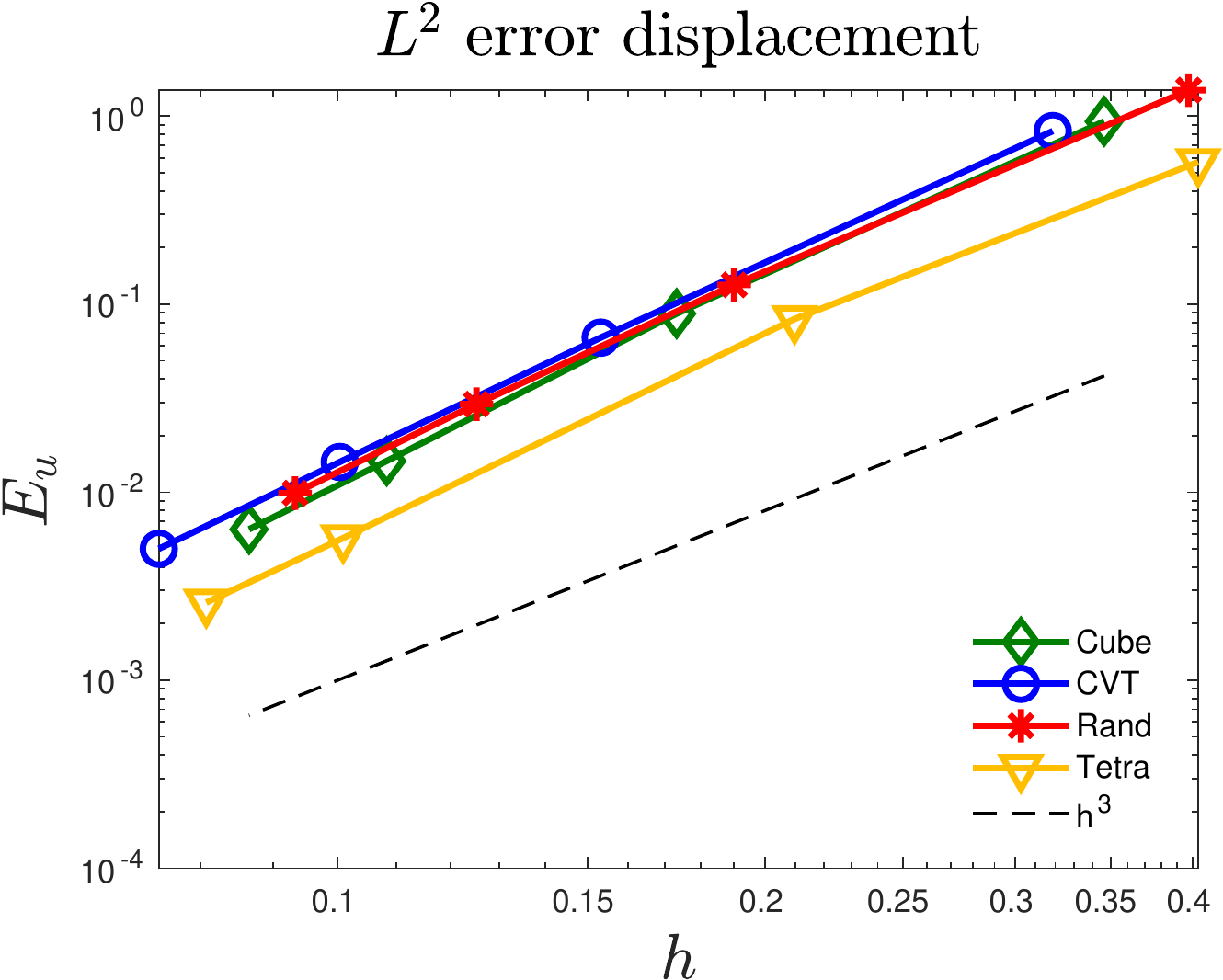}}
\subfigure[]{\includegraphics[width=\sizeGraph\textwidth,trim = 0mm 0mm 0mm 0mm, clip]{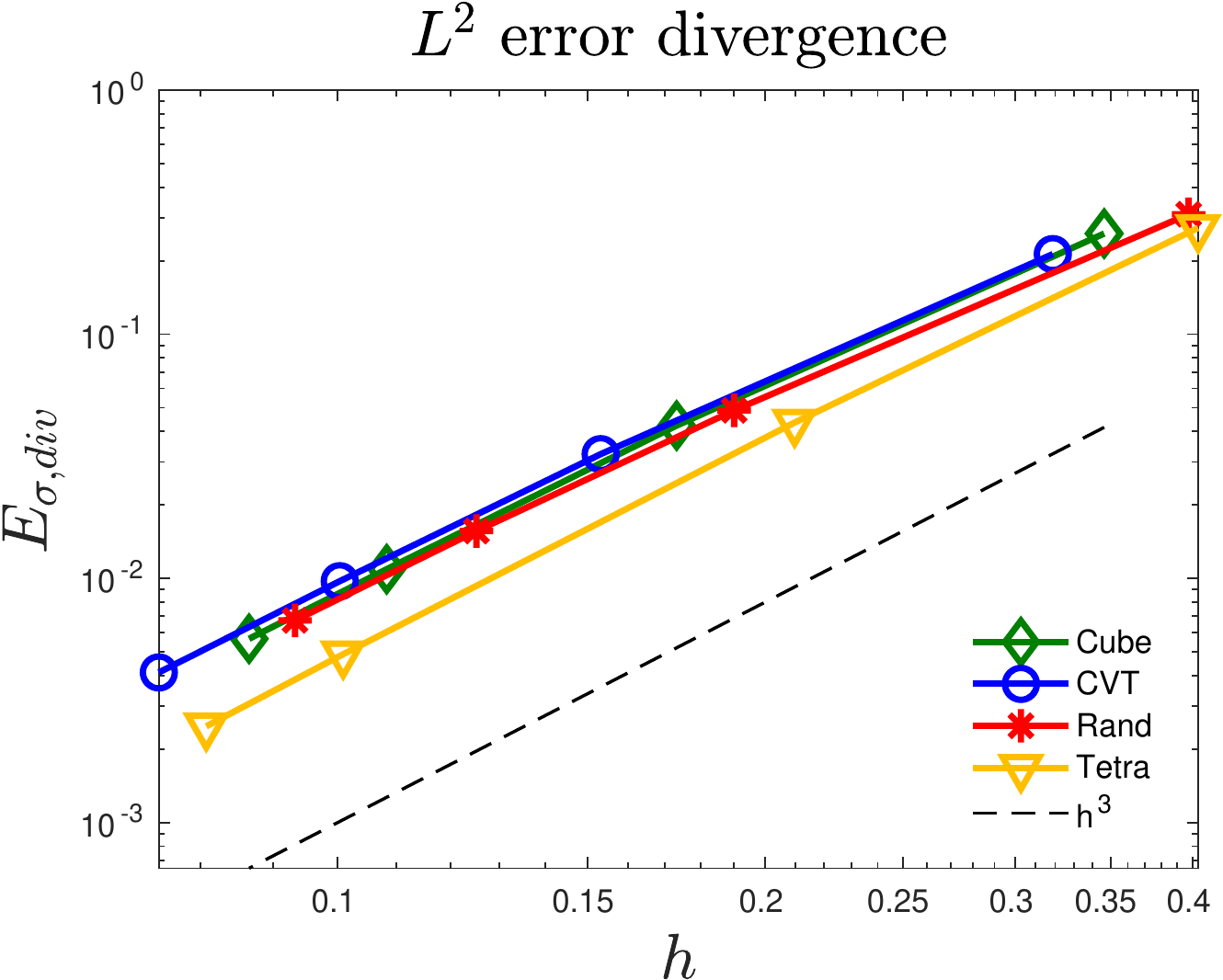}}\\
\subfigure[]{\includegraphics[width=\sizeGraph\textwidth,trim = 0mm 0mm 0mm 0mm, clip]{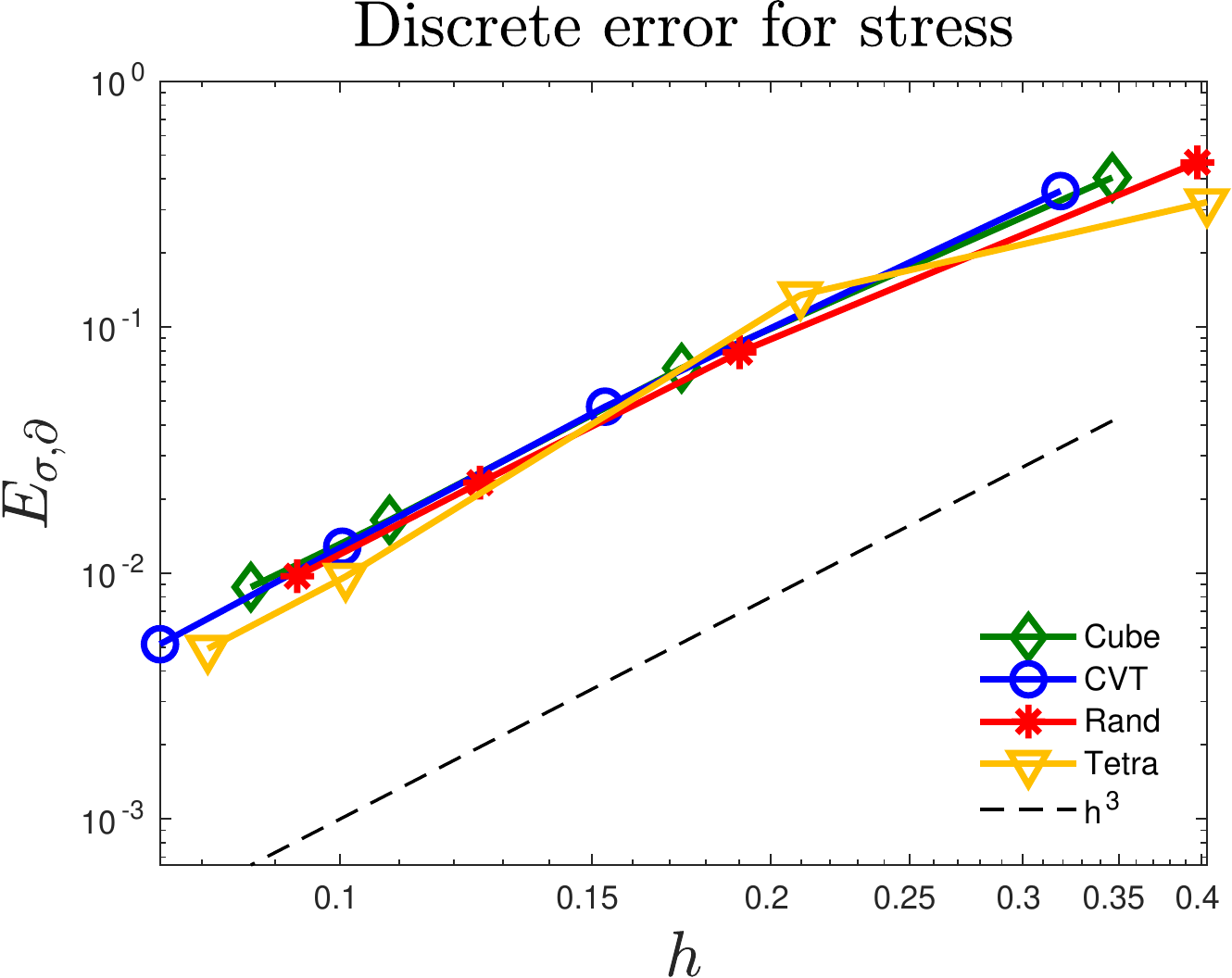}}
\subfigure[]{\includegraphics[width=\sizeGraph\textwidth,trim = 0mm 0mm 0mm 0mm, clip]{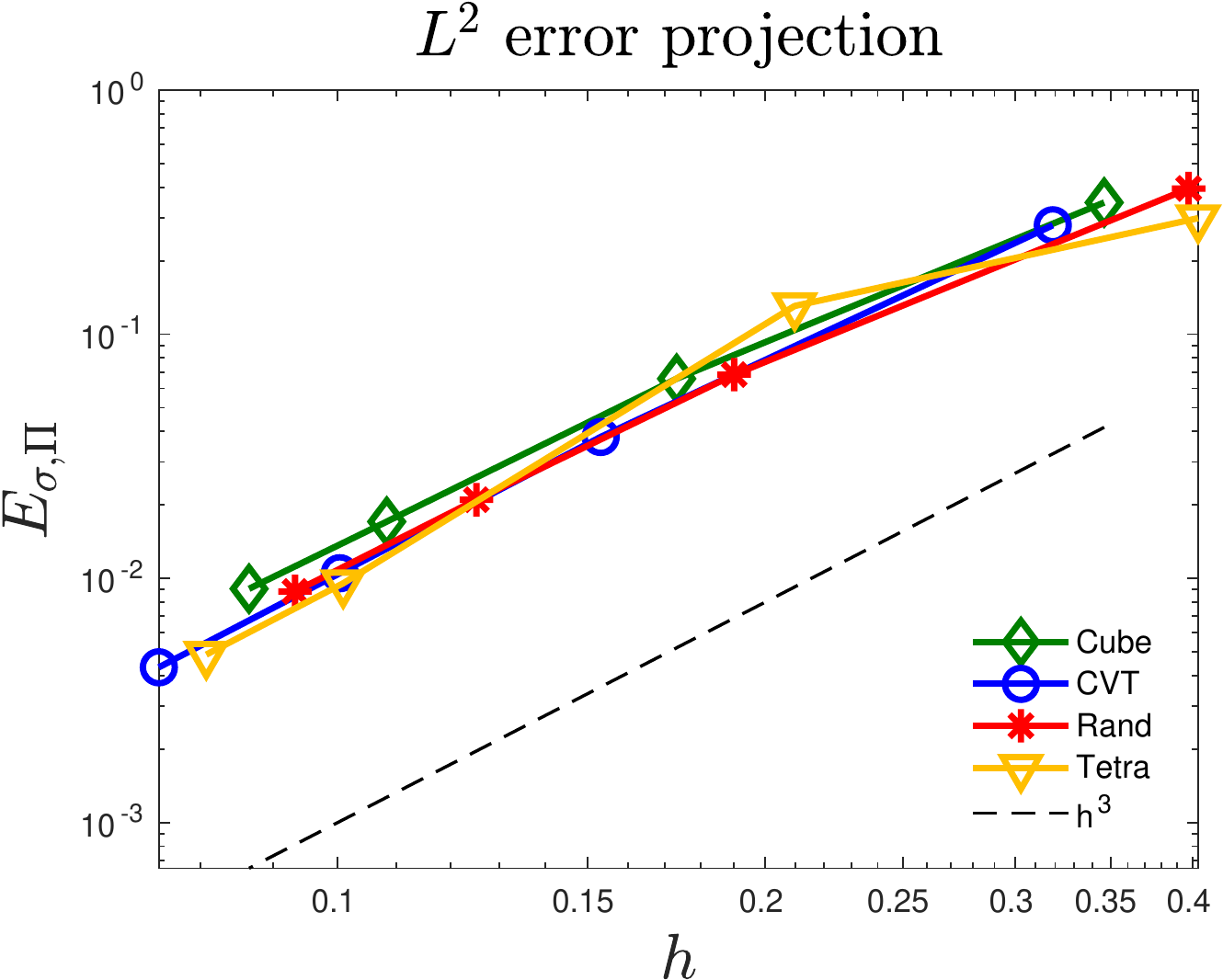}}\\
\caption{\textbf{Test b: nearly incompressible material.} $h$-convergence results for all meshes for k=2\label{fig:resuTest4}}
%: (a) $E_{\bbu}$ error norm plot, (b) $E_{\bfsigma,\bdiv}$ error norm plot, (c) $E_{\bfsigma}$ error norm plot, (d) $E_{\bfsigma,\Pi}$ error norm plot.}		
\end{figure}

Figures \ref{fig:resuTest1} and \ref{fig:resuTest2} report the $h$-convergence of the proposed VEM approach for \textbf{Test a} when $k=1,2$, respectively. Figures \ref{fig:resuTest3} and \ref{fig:resuTest4} the convergence for \textbf{Test b} always when $k=1,2$. The asymptotic convergence rate is approximately equal to the right order for all error norms and meshes. Moreover, the convergence lines are close to each others and this fact confirms the robustness of the proposed schemes with respect to element shape. 
\subsection{Post-processing results}
In this section, we numerically confirm the superconvergence result, predicted by Theorem~\ref{theorem:convergenceAndSuperconvergence} and used in the proof of Theorem~\ref{theorem:postprocessing}, and we exhibit the accuracy of our post-processing procedure. For sake of simplicity, we will consider only the compressible case (\textbf{Test a}). We consider the following error quantities:
\begin{itemize}
    \item the error norm $E_{\P_h^k \bbu}:=\norm[0]{\P^k_h\bbu - \bbu_h}$ for the superconvergence result. According to Theorem~\ref{theorem:convergenceAndSuperconvergence} the expected behavior of such an error is $O(h^{k+2})$ for sufficiently regular problems;
    \item the error norm $E_{\uhstar}:=\norm[0]{\bbu -\Pi^{\nabla} \uhstar}$ for the post-processed displacement. Since $\uhstar$ is virtual inside the element, we use the above error where  $\Pi^{\nabla}$  is the projection operator defined by~\eqref{eq:projectionPostProcessing1} and~\eqref{eq:projectionPostProcessing2}. Also this quantity behaves as $O(h^{k+2}$
\end{itemize}

\begin{figure}[ht]
\centering
\subfigure[]{\includegraphics[width=\sizeGraph\textwidth,trim = 0mm 0mm 0mm 0mm, clip]{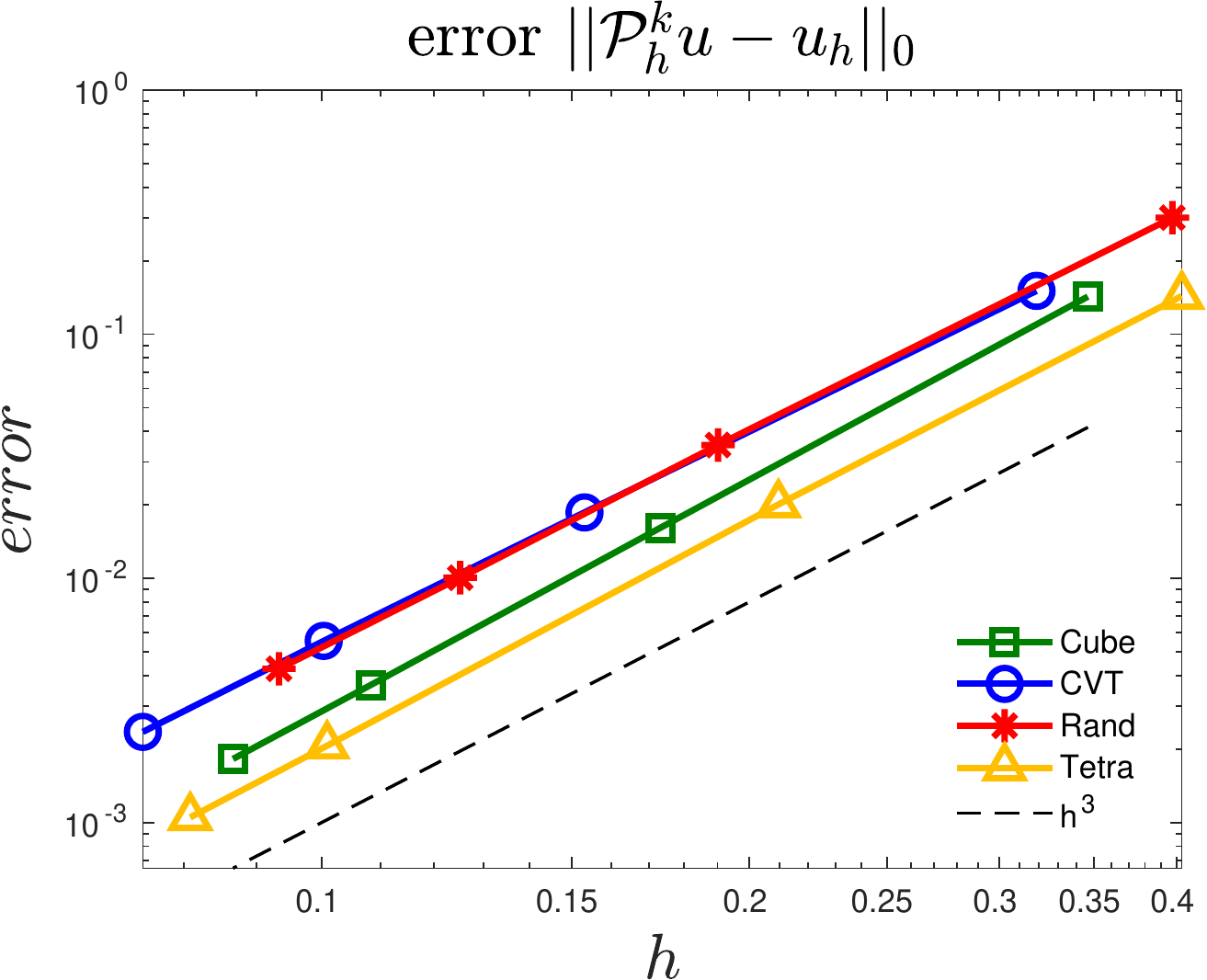}}
\subfigure[]{\includegraphics[width=\sizeGraph\textwidth,trim = 0mm 0mm 0mm 0mm, clip]{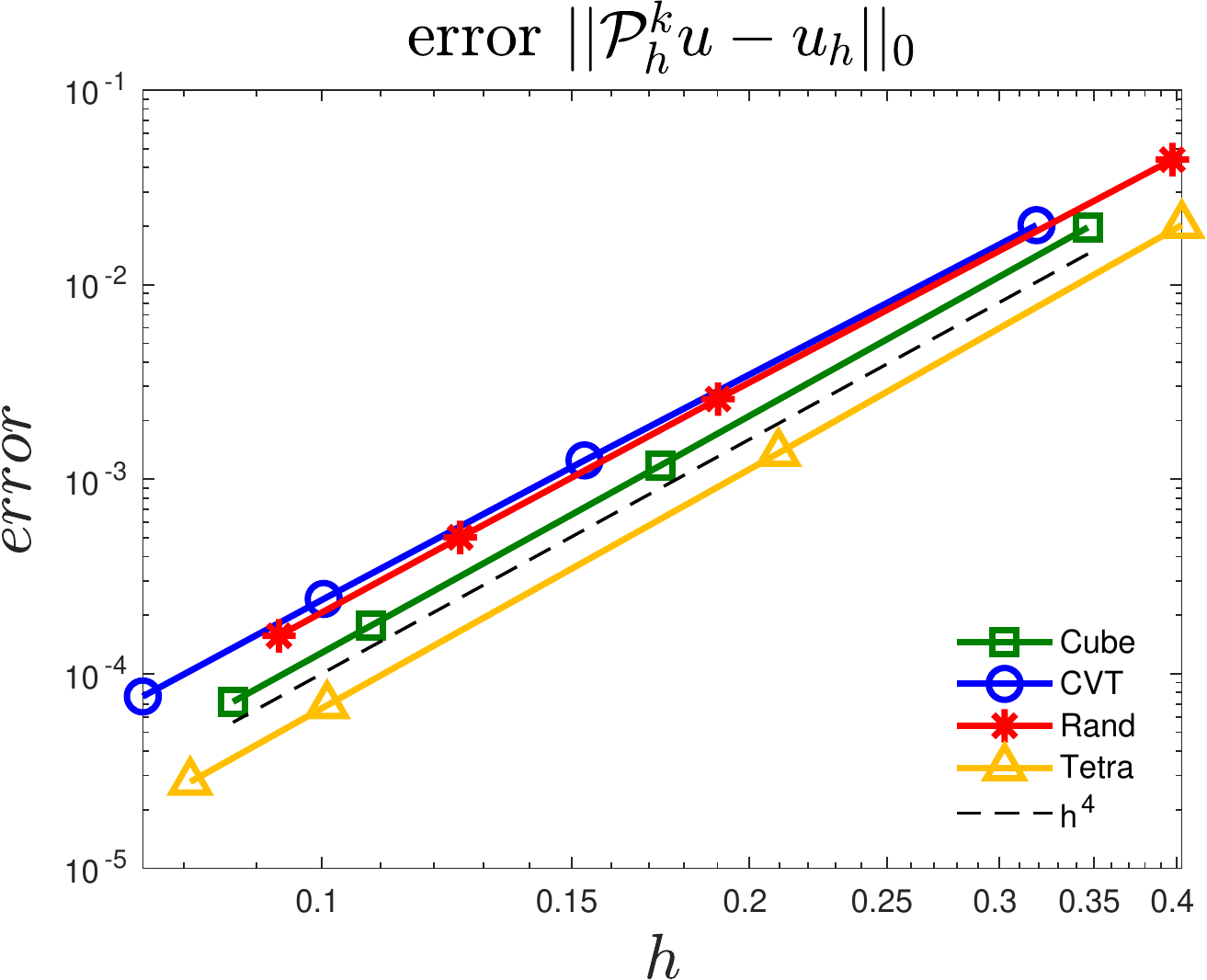}}\\
\caption{\textbf{Test a:} superconvergence results for all meshes, $k=1$ on the left and $k=2$ on the right.\label{fig:resuTest5}}	
\end{figure}
\begin{figure}[ht]
\centering
\subfigure[]{\includegraphics[width=\sizeGraph\textwidth,trim = 0mm 0mm 0mm 0mm, clip]{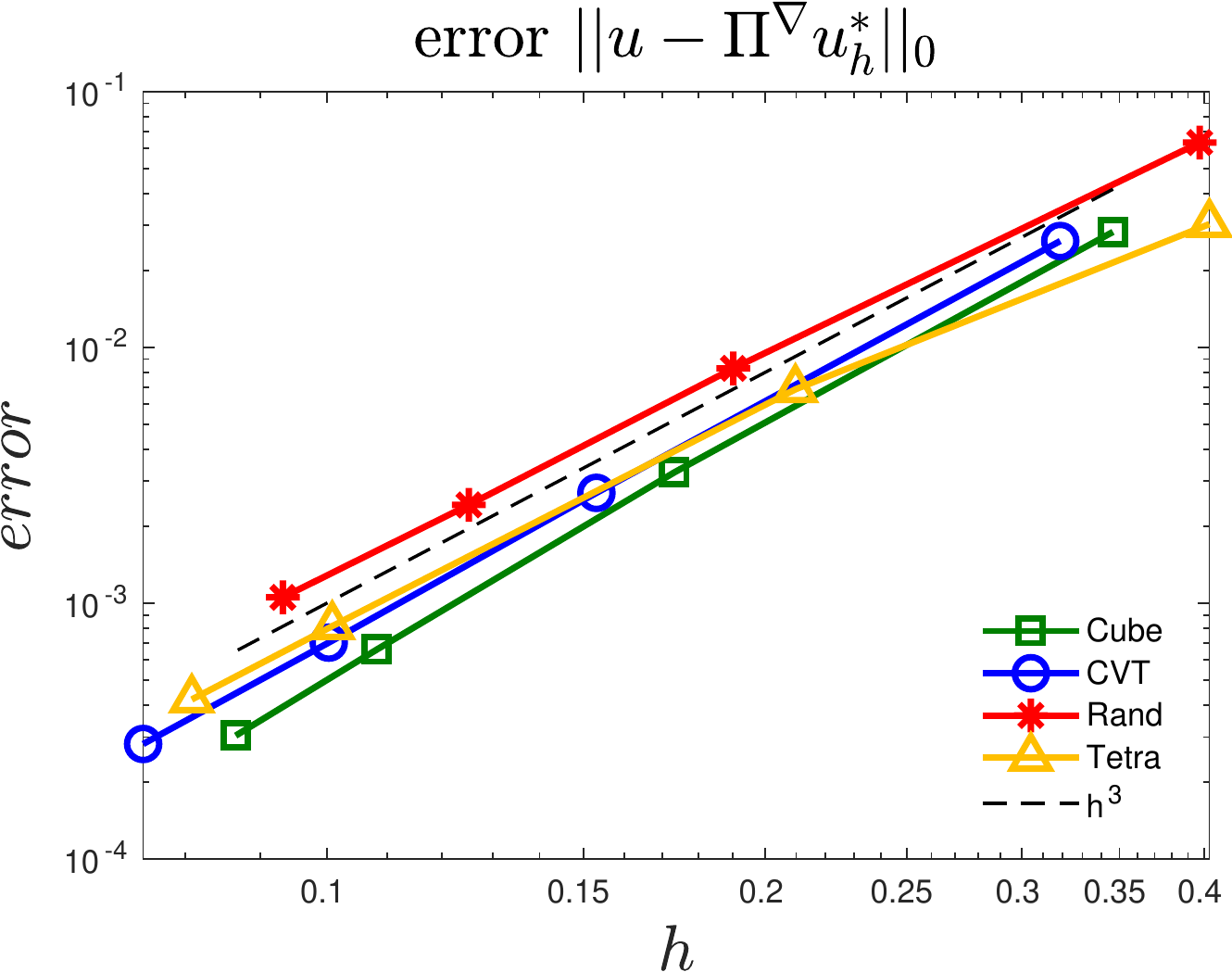}}
\subfigure[]{\includegraphics[width=0.33\textwidth,trim = 0mm 0mm 0mm 0mm, clip]{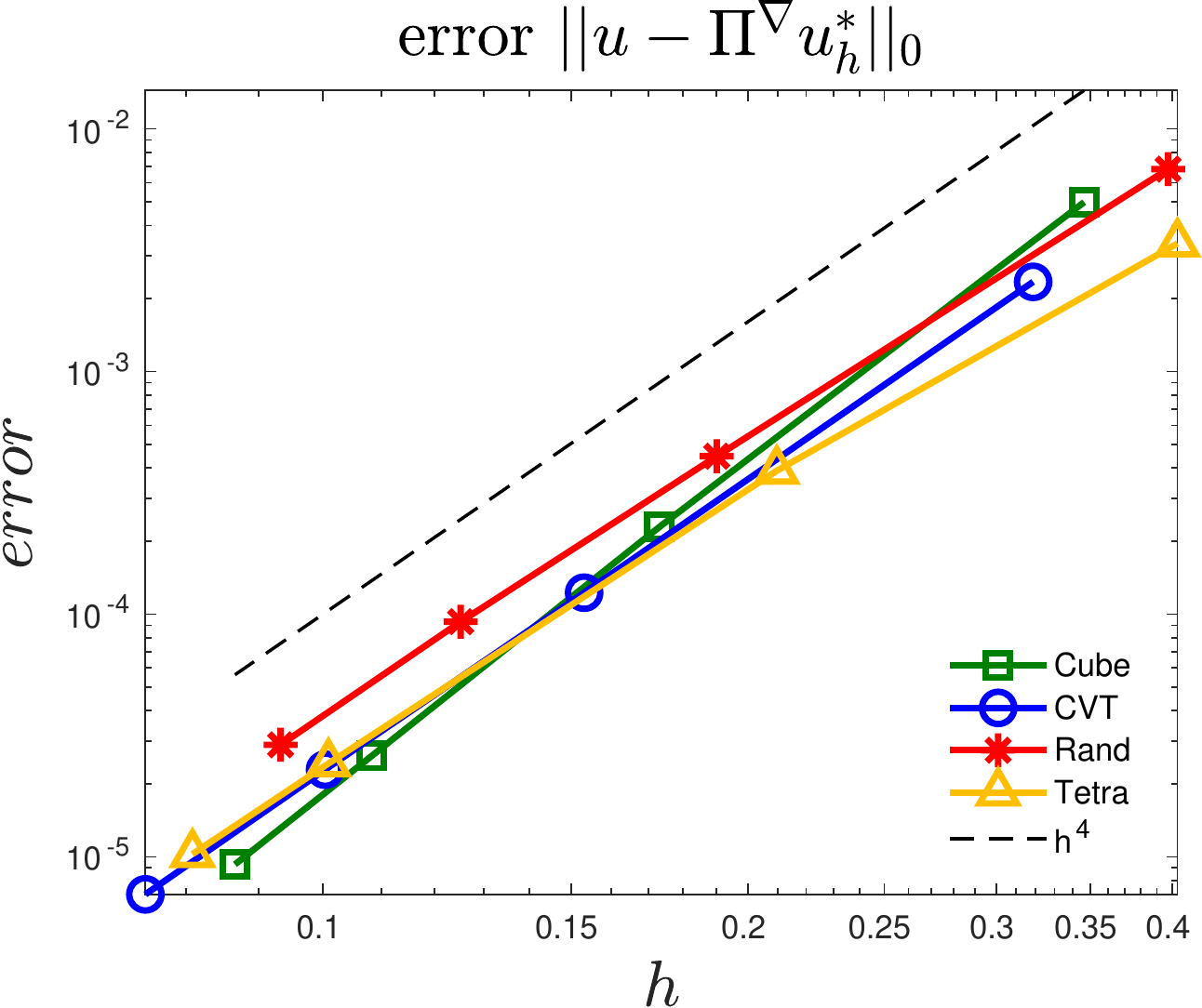}}\\
\caption{\textbf{Test a:} $h$-convergence results of the post-processed displacement for all meshes, $k=1$ on the left and $k=2$ on the right.\label{fig:resuTest6}}	
\end{figure}

In Figures~\ref{fig:resuTest5} and~\ref{fig:resuTest6} we report the convergence lines for the errors $E_{\P_h^k\bbu}$ and $E_{\uhstar}$, respectively. As expected, the asymptotic convergence rate is approximately equal to 3 when the degree of accuracy $k$ is 1, while the rate is 4 if we consider $k=2$. Moreover, also in this case the convergence lines are close to each others and this fact further confirms the robustness of the proposed scheme with respect to element shape.
	\section{Conclusion}
We have proposed a family of Virtual Element Methods for 3D linear elasticity problems described by the Hellinger-Reissner variational principle. The discrete stress tensors are a-priori symmetric, while the corresponding tractions are continuous across the element interfaces. The convergence and stability analysis has been confirmed by some numerical results.  Moreover, exploiting the hybridization procedure with the extra information derived from the original discrete solution and the Lagrange multipliers, we have achieved a better approximation for the displacement field. A possible future development of the present paper may concern the design of schemes for curved elements.
\section*{Acknowledgments}
The author is a member of the INdAM-GNCS. The author kindly acknowledges partial financial support by the INdAM-GNCS Project 2022 CUP E55F2200027001 and by the projects PRIN 2017 (No. 201744KLJL) and PRIN 2020 (No. 20204LN5N5), funded by the Italian Ministry of Universities and Research (MUR).
	
	%%%%%%%%%%%%%%%%%%%%%%%%%%%%%%%%%%%%%%%%%%
	%%
	%% Biblio
	%%
	%%%%%%%%%%%%%%%%%%%%%%%%%%%%%%%%%%%%%%%%%%
%	\bibliographystyle{ws-m3as}
\bibliographystyle{plain}
	\bibliography{biblioSistemata}	

\begin{thebibliography}{10}

\bibitem{projectors}
B.~Ahmad, A.~Alsaedi, F.~Brezzi, L.~D. Marini, and A.~Russo.
\newblock Equivalent projectors for virtual element methods.
\newblock {\em Comput. Math. Appl.}, 66(3):376--391, 2013.

\bibitem{ABSVCahnHilliard}
P.~F. Antonietti, L.~Beir{\~a}o~da Veiga, S.~Scacchi, and M.~Verani.
\newblock A ${C}^1$ virtual element method for the {C}ahn--{H}illiard equation
  with polygonal meshes.
\newblock {\em SIAM J. Numer. Anal.}, 54(1):34--56, 2016.

\bibitem{ASVVCahnHilliard}
P.~F. Antonietti, S.~Scacchi, G.~Vacca, and M.~Verani.
\newblock ${C}^1$-{VEM} for some variants of the {C}ahn--{H}illiard equation: A
  numerical exploration.
\newblock {\em Discrete Contin. Dyn-S}, 15(8):1919--1939, 2022.

\bibitem{ArnoldBrezzi_1985}
D.~N. Arnold and F.~Brezzi.
\newblock Mixed and nonconforming finite element methods: implementation,
  postprocessing and error estimates.
\newblock {\em ESAIM Math. Model. Numer. Anal.}, 19:7--32, 1985.

\bibitem{Arnold2007mixed}
D.~N. Arnold, R.~Falk, and R.~Winther.
\newblock Mixed finite element methods for linear elasticity with weakly
  imposed symmetry.
\newblock {\em Math. Comp.}, 76(260):1699--1723, 2007.

\bibitem{ArnoldWintherNonconforming}
D.~N. Arnold and R.~Winther.
\newblock Nonconforming mixed elements for elasticity.
\newblock {\em Math. Models Methods Appl. Sci.}, 13(03):295--307, 2003.

\bibitem{ARTIOLI2020112667}
E.~Artioli, L.~{Beir\~ao da Veiga}, and F.~Dassi.
\newblock Curvilinear virtual elements for 2{D} solid mechanics applications.
\newblock {\em Comp. Methods Appl. Mech. Engrg.}, 359:112667, 2020.

\bibitem{ABLS_part_I}
E.~Artioli, L.~{Beir{\~a}o da Veiga}, C.~Lovadina, and E.~Sacco.
\newblock Arbitrary order 2{D} virtual elements for polygonal meshes: Part {I},
  elastic problem.
\newblock {\em Comput. Mech.}, 60(3):355--377, Sep 2017.

\bibitem{ABLS_part_II}
E.~Artioli, L.~{Beir{\~a}o da Veiga}, C.~Lovadina, and E.~Sacco.
\newblock Arbitrary order {2D} virtual elements for polygonal meshes: Part
  {II}, inelastic problems.
\newblock {\em Comput. Mech.}, 60(4):643--657, Oct 2017.

\bibitem{ARTIOLI2017155}
E.~Artioli, S.~de~Miranda, C.~Lovadina, and L.~Patruno.
\newblock A stress/displacement virtual element method for plane elasticity
  problems.
\newblock {\em Comp. Methods Appl. Mech. Engrg.}, 325:155--174, 2017.

\bibitem{ARTIOLI2018978}
E.~Artioli, S.~de~Miranda, C.~Lovadina, and L.~Patruno.
\newblock A family of virtual element methods for plane elasticity problems
  based on the hellinger-reissner principle.
\newblock {\em Comp. Methods Appl. Mech. Engrg.}, 340:978--999, 2018.

\bibitem{ARTIOLI_RICOVERYVEM}
E.~Artioli, S.~de~Miranda, C.~Lovadina, and L.~Patruno.
\newblock An equilibrium-based stress recovery procedure for the {VEM}.
\newblock {\em Int. J. Numer. Methods Eng.}, 117:885--900, 2019.

\bibitem{AyusoLipnikovManzini}
B.~Ayuso, K.~Lipnikov, and G.~Manzini.
\newblock The nonconforming virtual element method.
\newblock {\em ESAIM: Math. Model. Numer. Anal.}, 50(3):879--904, 2016.

\bibitem{BassiBottiColomboDiPietroTesini}
F.~Bassi, L.~Botti, A.~Colombo, Daniele~A. {Di Pietro}, and P.~Tesini.
\newblock On the flexibility of agglomeration based physical space
  discontinuous galerkin discretizations.
\newblock {\em J. Comput. Phys.}, 231:45--65, 2012.

\bibitem{volley}
L.~Beir\~ao~da Veiga, F.~Brezzi, A.~Cangiani, G.~Manzini, L.~D. Marini, and
  A.~Russo.
\newblock Basic principles of {V}irtual {E}lement {M}ethods.
\newblock {\em Math. Models Methods Appl. Sci.}, 23:119--214, 2013.

\bibitem{BCNVV_adaptiveVEM}
L.~Beir\~{a}o~da Veiga, C.~Canuto, R.~H. Nochetto, G.~Vacca, and M.~Verani.
\newblock Adaptive {VEM}: {S}tabilization-{F}ree {A} {P}osteriori {E}rror
  {A}nalysis and {C}ontraction {P}roperty.
\newblock {\em SIAM J. Numer. Anal.}, 61(2):457--494, 2023.

\bibitem{BLRXX}
L.~Beir{\~a}o~da Veiga, C.~Lovadina, and A.~Russo.
\newblock Stability analysis for the virtual element method.
\newblock {\em Math. Models Methods Appl. Sci.}, 27(13):2557--2594, 2017.

\bibitem{BLV}
L.~Beir{\~a}o~da Veiga, C.~Lovadina, and G.~Vacca.
\newblock Divergence free virtual elements for the {S}tokes problem on
  polygonal meshes.
\newblock {\em ESAIM Math. Model. Numer. Anal.}, 51(2):509--535, 2017.

\bibitem{BENEDETTO2014135}
M.~F. Benedetto, S.~Berrone, S.~Pieraccini, and S.~Scial\`o.
\newblock The virtual element method for discrete fracture network simulations.
\newblock {\em Comput. Methods Appl. Mech. Engrg.}, 280:135--156, 2014.

\bibitem{BENEDETTO201755}
M.~F. Benedetto, A.~Borio, and S.~Scial\`o.
\newblock Mixed virtual elements for discrete fracture network simulations.
\newblock {\em Finite Elem. Anal. Des.}, 134:55--67, 2017.

\bibitem{BERTOLUZZA2021}
S.~Bertoluzza, G.~Manzini, M.~Pennacchio, and D.~Prada.
\newblock Stabilization of the nonconforming virtual element method.
\newblock {\em Comput. Math. Appl.}, 116:25--47, 2022.

\bibitem{BoffiBrezziFortin}
D.~Boffi, F.~Brezzi, and M.~Fortin.
\newblock {\em Mixed finite element methods and applications}, volume~44 of
  {\em Springer Series in Computational Mathematics}.
\newblock Springer, Heidelberg, 2013.

\bibitem{Braess:book}
D.~Braess.
\newblock {\em Finite elements. {T}heory, fast solvers, and applications in
  elasticity theory.}
\newblock Cambridge University Press, third edition, 2007.

\bibitem{brenner2003}
S.~C. Brenner.
\newblock {P}oincaré--{F}riedrichs {I}nequalities for {P}iecewise ${H}^1$
  {F}unctions.
\newblock {\em SIAM J. Numer. Anal.}, 41(1):306--324, 2003.

\bibitem{brenner2004}
S.~C. Brenner.
\newblock {D}iscrete {S}obolev and {P}oincaré inequalities for piecewise
  polynomial functions.
\newblock {\em Electron. Trans. Numer. Anal.}, 18:42--48, 2004.

\bibitem{Brenner-Scott:2008}
S.~C. Brenner and L.~R. Scott.
\newblock {\em The mathematical theory of finite element methods}, volume~15 of
  {\em Texts in Applied Mathematics}.
\newblock Springer, New York, third edition, 2008.

\bibitem{BrennerSungSmallEdges}
S.~C. Brenner and L.~Y. Sung.
\newblock Virtual element methods on meshes with small edges or faces.
\newblock {\em Math. Models Methods Appl. Sci.}, 28(7):1291--1336, 2018.

\bibitem{CIHAN2022115385}
M.~Cihan, B.~Hudobivnik, J.~Korelc, and P.~Wriggers.
\newblock A virtual element method for 3d contact problems with non-conforming
  meshes.
\newblock {\em Comput. Methods Appl. Mech. Engrg.}, 402:115385, 2022.

\bibitem{DASSI20221}
F.~Dassi, A.~Fumagalli, A.~Scotti, and G.~Vacca.
\newblock Bend 3d mixed virtual element method for darcy problems.
\newblock {\em Comput. Math. Appl.}, 119:1--12, 2022.

\bibitem{DLV}
F.~Dassi, C.~Lovadina, and M.~Visinoni.
\newblock A three-dimensional {H}ellinger-{R}eissner virtual element method for
  linear elasticity problems.
\newblock {\em Comput. Methods Appl. Mech. Engrg.}, 364:112910, 2020.

\bibitem{DLV2021}
F.~Dassi, C.~Lovadina, and M.~Visinoni.
\newblock Hybridization of the virtual element method for linear elasticity
  problems.
\newblock {\em Math. Models Methods Appl. Sci.}, 31(14):2979--3008, 2021.

\bibitem{CVT1999}
Q.~Du, V.~Faber, and M.~Gunzburger.
\newblock Centroidal voronoi tessellations: Applications and algorithms.
\newblock {\em SIAM Rev.}, 41(4):637--676, December 1999.

\bibitem{DALTRI2021113663}
A.~M. D’Altri, S.~{de Miranda}, L.~Patruno, and E.~Sacco.
\newblock An enhanced vem formulation for plane elasticity.
\newblock {\em Comput. Methods Appl. Mech. Engrg.}, 376:113663, 2021.

\bibitem{GiraudLangouRozioznik}
L.~Giraud, J.~Langou, and M.~Rozloznik.
\newblock The loss of orthogonality in the gram-schmidt orthogonalization
  process.
\newblock {\em Comput. Math. Appl.}, 50(7):1069--1075, 2005.

\bibitem{Hudobivnik2019}
B.~Hudobivnik, F.~Aldakheel, and P.~Wriggers.
\newblock A low order 3d virtual element formulation for finite
  elasto–plastic deformations.
\newblock {\em Comput. Mech.}, 63:253--269, 2019.

\bibitem{Lamperti2022}
A.~Lamperti, M.~Cremonesi, U.~Perego, C.~Lovadina, and A.~Russo.
\newblock {A {H}u-{W}ashizu variational approach to self-stabilized Virtual
  Elements: 2D linear elastostatics}.
\newblock {\em Comput. Mech.}, 71:935–955, 2023.

\bibitem{Lions-Magenes}
J.-L. Lions and E.~Magenes.
\newblock {\em Probl\`emes aux limites non homog\`enes et applications. {V}ol.
  1}.
\newblock Travaux et Recherches Math\'ematiques, No. 17. Dunod, Paris, 1968.

\bibitem{MORA2020112687}
D.~Mora and I.~Vel\'asquez.
\newblock Virtual element for the buckling problem of kirchhoff–love plates.
\newblock {\em Comput. Methods Appl. Mech. Engrg.}, 360:112687, 2020.

\bibitem{Schoeberl1}
A.~Pechstein and J.~Sch\"oberl.
\newblock Tangential-displacement and normal-normal-stress continuous mixed
  finite elements for elasticity.
\newblock {\em Math. Models Methods Appl. Sci.}, 21:1761--1782, 2011.

\bibitem{Schoeberl2}
A.~Pechstein and J.~Sch\"oberl.
\newblock An analysis of the {TDNNS} method using natural norms.
\newblock {\em Numer. Math.}, 139:93--120, 2018.

\bibitem{tetgen}
H.~Si.
\newblock Tetgen, a delaunay-based quality tetrahedral mesh generator.
\newblock {\em ACM Trans. Math. Softw.}, 41(2):1--36, February 2015.

\bibitem{Sommariva2009}
A.~Sommariva and M.~Vianello.
\newblock {G}auss– {G}reen cubature and moment computation over arbitrary
  geometries.
\newblock {\em J. Comput. Appl. Math.}, 231(2):886--896, 2009.

\bibitem{wriggers2017}
P.~Wriggers, B.~D. Reddy, W.~Rust, and B.~Hudobivnik.
\newblock Efficient virtual element formulations for compressible and
  incompressible finite deformations.
\newblock {\em Comput. Mech.}, page 253–268, 04 2017.

\end{thebibliography}
		\end{document}